\documentclass[reqno,11pt,a4paper]{amsart}
\usepackage{amsmath,amssymb,amsthm,graphicx,mathrsfs,url}
\usepackage[usenames,dvipsnames]{xcolor}
\usepackage[colorlinks=true,linkcolor=Red,citecolor=Green]{hyperref}
\usepackage{tikz}
\usetikzlibrary{decorations.pathreplacing}
\usetikzlibrary{arrows}
\usetikzlibrary{shapes.misc}
\usetikzlibrary{shapes.symbols}
\usetikzlibrary{patterns}

\def\?[#1]{\textbf{[#1]}\marginpar{\Large{\textbf{??}}}}

\setlength{\textheight}{24.7cm}
\setlength{\textwidth}{16cm}
\setlength{\parskip}{0.05in}
%\setstretch{1.1}

\calclayout

% Include all pictures as MetaPOST
\DeclareGraphicsRule{*}{mps}{*}{}

\newtheorem{theo}{Theorem}
\newtheorem{prop}{Proposition}[section]

\newtheorem{lemm}[prop]{Lemma}

\numberwithin{equation}{section}

\newcommand{\mc}{\mathcal}
\newcommand{\rr}{\mathbb{R}}
\newcommand{\nn}{\mathbb{N}}
\newcommand{\cc}{\mathbb{C}}

\newcommand{\la}{\lambda}
\newcommand{\eps}{\varepsilon}

\newcommand{\pl}{\partial}

\newcommand{\til}{\widetilde}
\newcommand{\bbar}{\overline}

\newcommand{\cjd}{\rangle}
\newcommand{\cjg}{\langle}

\DeclareMathOperator{\comp}{comp}

\DeclareMathOperator{\Ell}{ell}

\let\Im=\Imag

\DeclareMathOperator{\Op}{Op}

\let\Re=\Real

\DeclareMathOperator{\supp}{supp}

\DeclareMathOperator{\divv}{div}

\DeclareMathOperator{\M}{\mathcal{M}}

%%%%%%%%%%%%%%%%%%%%%%%%%%%%%%%%%%%%%%%%%%%%%%%%%%%%%%%%%%%%%%%%%%%%%%%%%%%%%%%%
\title[First band of Ruelle resonances in dimension $3$]{First band of Ruelle resonances for contact Anosov flows in dimension $3$}
\author[M. Ceki\'c]{Mihajlo Ceki\'c}
\email{mihajlo.cekic@universite-paris-saclay.fr}
\address{Laboratoire de Math\'ematiques d'Orsay, Universit\'e Paris-Saclay, CNRS, 91405 Orsay, France}
\author[C. Guillarmou]{Colin Guillarmou}
\email{colin.guillarmou@math.u-psud.fr}
\address{Laboratoire de Math\'ematiques d'Orsay, Univ. Paris-Sud, CNRS, Universit\'e Paris-Saclay, 91405 Orsay, France}

%%%%%%%%%%%%%%%%%%%%%%%%%%%%%%%%%%%%%%%%%%%%%%%%%%%%%%%%%%%%%%%%%%%%%%%%%%%%%%%%

%%%%%%%%%%%%%%%%%%%%%%%%%%%%%%%%%%%%%%%%%%%%%%%%%%%%%%%%%%%%%%%%%%%%%%%%%%%%%%%%
%%%%%%%%%%%%%%%%%%%%%%%%%%%%%%%%%%%%%%%%%%%%%%%%%%%%%%%%%%%%%%%%%%%%%%%%%%%%%%%%
\begin{document}
\maketitle

\begin{abstract}
We show, using semiclassical measures and unstable derivatives, that a smooth vector field  
$X$ generating a contact Anosov flow on a $3$-dimensional manifold $\mc{M}$ has only finitely many Ruelle resonances in the vertical strips $\{ s\in \cc\ |\ {\rm Re}(s)\in [-\nu_{\min}+\eps,-\frac{1}{2}\nu_{\max}-\eps]\cup [-\frac{1}{2}\nu_{\min}+\eps,0]\}$ for all $\eps>0$, where $0<\nu_{\min}\leq \nu_{\max}$ are the minimal and maximal expansion rates of the flow
(the first strip only makes sense if $\nu_{\min}>\nu_{\max}/2$). 
We also show polynomial bounds in $s$ for the resolvent $(-X-s)^{-1}$ as $|{\rm Im}(s)|\to \infty$ in Sobolev spaces, and obtain similar results for cases with a potential. This is a short proof of a particular case of the results by Faure-Tsujii in \cite{FaTs1,FaTs2,FaTs3}, using that $\dim E_u=\dim E_s=1$. 
\end{abstract}

\section{Introduction}

In this note, we study the localization of Ruelle resonances for contact Anosov flows in dimension $3$ using semiclassical measures, and we show  the existence of a first band of resonances under a pinching condition on the maximal and minimal expansion rates of the flow. The fact that a vector field $X$ generating a contact Anosov flow has a band structure (see Figure \ref{fig:bands}) for its Ruelle resonance spectrum is proved in a series of seminal papers by Faure-Tsujii \cite{FaTs1,FaTs2,FaTs3} using FBI transform techniques and normal forms. In the $3$-dimensional case, we give a rather short proof of the existence of the first band using unstable derivatives and semiclassical measures in the spirit of Dyatlov's proof \cite{Dy} for operators with $r$-normally hyperbolic trapped set. Thus the aim of this work is to use the $3$-dimensional particularity and semiclassical measures to present the mechanism behind this structure.  

Let $\mc{M}$ be a $3$-dimensional closed manifold, equipped with a Riemannian metric $g$ and let $X$ be a smooth vector field on $\mc{M}$ such that its flow $\varphi_t$ is Anosov (see Section \ref{sec:dynback} for a definition). Denote by $E_u\subset T\mc{M}$ and $E_s\subset T\mc{M}$ the unstable and stable bundles. We shall assume that $\varphi_t$ is a contact Anosov flow, which means that there is a smooth $1$-form $\alpha$ on $\mc{M}$ such that $\ker \alpha=E_u\oplus E_s$, $\alpha(X)=1$ and $\alpha\wedge d\alpha$ is a volume form, and that $E_u,E_s$ are orientable\footnote{For instance, this is satisfied for Anosov geodesic flows on orientable Riemannian surfaces.} (and hence trivializable) bundles.
Define the minimal and maximal expansion rate of $\varphi_t$ to be\footnote{The limit exists by Fekete's lemma, and the same holds as well for $V_{\max},V_{\min}$ defined later.} 
\[\nu_{\min} := \lim_{t \to \infty} \inf_{x \in \mc{M}} \frac{1}{t}
\log\|d\varphi_{t}(x)|_{E_u}\|_g,\quad 
	\nu_{\max} := \lim_{t \to \infty} \sup_{x \in \mc{M}} \frac{1}{t}\log\|d\varphi_{t}(x)|_{E_u}\|_g.\]
Now, we come to the notion of Ruelle resonances for $X$. It is proved by Butterley-Liverani \cite{BuLi} and Faure-Sj\"ostrand \cite{FaSj} that for each $N>0$, there is a Hilbert space $\mc{H}^{N}$, called anisotropic Sobolev space of order $N$, satisfying\footnote{$H^N(\mc{M})=(1+\Delta_g)^{-N/2}L^2(\mc{M})$ denotes the usual Sobolev space of order $N$ on $\mc{M}$.} $H^N(\mc{M})\subset \mc{H}^{N}\subset H^{-N}(\mc{M})$ so that for ${\rm Re}(s)>-\nu_{\min}N$
\[ -X-s: \{ u\in \mc{H}^N\, |\, -Xu\in \mc{H}^N\}\to \mc{H}^N\]
is a Fredholm operator of index $0$, implying that $-X$ has discrete spectrum in the half-plane 
$\{s \in \cc\,|\, {\rm Re}(s)>-\nu_{\min}N\}$. Moreover, there is no spectrum in ${\rm Re}(s)>0$,
the spectrum in ${\rm Re}(s)>-\nu_{\min}N_0$ on $\mc{H}^N$ does not depend on $N\geq N_0$ or the choice of the space $\mc{H}^N$ and the resolvent 
\[ R(s):=(-X-s)^{-1}: C^\infty(\mc{M})\to C^{-\infty}(\mc{M})\]  
is meromorphic in $\cc$. An $s \in \mathbb{C}$ is a pole of the resolvent if and only if it is an eigenvalue of $-X$ in $\mc{H}^N$ for some $N>-{\rm Re}(s)/\nu_{\min}$, and such $s$ are called \emph{Ruelle resonances}, and $u \in \mc{H}^N$ with $(X + s)u = 0$ are called \emph{resonant states}.

Since we want to prove absence of Ruelle resonances and bounds on the resolvent $R(s)$ for large ${\rm Im}(s)$, it is convenient to use a semiclassical rescaling, that is to take a small 
parameter $h>0$ so that $h{\rm Im}(s)$ is uniformly bounded. 
For such small parameter $h>0$, we denote by $H_h^N$ the $H^N(\mc{M})$ Sobolev space equipped with the norm $\|u\|_{H_h^N} := \|(1+h^2\Delta_g)^{N/2}u\|_{L^2}$. 

We prove the following result:
\begin{theo}\label{th:intro}
Assume that $X$ is a smooth contact Anosov flow on a closed $3$-manifold $\mc{M}$ with $E_u,E_s$ orientable, and let $\nu_{\min},\nu_{\max}$ be the minimal and maximal expansion rates of the flow. 
For any $\varepsilon > 0$, there are only finitely many Ruelle resonances in the regions (see Figure \ref{fig:bands})
%\[ \mc{S}_\eps:=\Big\{s\in \cc \, | \, \Re s >-\frac{\nu_{\min}}{2} + \eps\Big\} \cup 
%\Big\{s\in \cc \, | \, \Re s \in (-\nu_{\min}+\varepsilon, -\frac{\nu_{\max}}{2} - \eps)\Big\}
%\]
\[\mc{S}_0(\eps):=\Big\{s\in \cc \, | \, \Re s >-\frac{\nu_{\min}}{2} + \eps\Big\} \quad and \quad \mc{S}_1(\eps) := \Big\{s\in \cc \, | \, \Re s \in (-\nu_{\min}+\varepsilon, -\frac{\nu_{\max}}{2} - \eps)\Big\},
\]
and the following bounds hold, as $|s| \to \infty$
\begin{align*}
	\|(-X - s)^{-1}\|_{H^{\frac{1}{2}}(\mc{M}) \to H^{-\frac{1}{2}}(\mc{M})} &= o(|s|), \quad \,\,\,\,s\in \mc{S}_0(\varepsilon),\\
	\|(-X - s)^{-1}\|_{{H^{1}}(\mc{M}) \to H^{-1}(\mc{M})} &= \mc{O}(|s|), \quad \,\,s\in \mc{S}_1(\varepsilon).
\end{align*}
More precisely, if $h\in (0,h_0)$ is a small parameter, for any $N > 0$ there exist Hilbert spaces $\mc{H}_h^{N}$ satisfying  uniform bounds 
\[ \exists C>0, \forall u\in C^\infty(\mc{M}), \forall h\in (0,h_0),\quad  C^{-1}\|u\|_{H_h^{-N}}\leq \|u\|_{\mc{H}_h^{N}}\leq C\|u\|_{H_h^N},\]  
such that the following bounds hold: for all $\eps>0$ and $A>1$, and $\lambda\in \cc$ satisfying $|{\rm Im}(\lambda)|\in [A^{-1},A]$, as $h \to 0$
\begin{align*}
	\|(-hX - \lambda)^{-1}\|_{\mc{H}_h^{N} \to \mc{H}_h^{N}} &= o(h^{-2}),\quad \,\,\,\,h^{-1}\lambda \in \mc{S}_0(\varepsilon), \quad N \geq \frac{1}{2},\\
	\|(-hX - \lambda)^{-1}\|_{\mc{H}_h^{N} \to \mc{H}_h^{N}} &= \mc{O}(h^{-2}), \quad \,\,h^{-1}\lambda \in \mc{S}_1(\varepsilon), \quad N \geq 1.
\end{align*}
\end{theo}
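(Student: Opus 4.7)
The plan is to prove the semiclassical resolvent bounds on $\mc{H}_h^N$ by contradiction using semiclassical defect measures, in the spirit of Dyatlov's proof for $r$-normally hyperbolic trapped sets; the non-semiclassical statements (finiteness of resonances in the strip, and the $o(|s|)$, $\mc{O}(|s|)$ resolvent bounds) then follow from the semiclassical ones by standard meromorphic Fredholm theory. I would first construct the $h$-dependent anisotropic Sobolev spaces $\mc{H}_h^N$ via an escape function $G \in S^0(T^*\mc{M})$ of Faure--Sj\"ostrand type, adapted to the lifted Hamiltonian flow of the symbol $p(x,\xi) = \xi(X)$: the weights $\mc{H}_h^N = e^{-NG_h^w} L^2$ then satisfy the claimed sandwich between $H_h^{\pm N}$, and any $u \in \mc{H}_h^N$ with $(-hX - \lambda)u \in h^{\infty} \mc{H}_h^N$ has $\WF_h(u) \subset E_u^*$. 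Assuming the bound fails on $\mc{S}_j(\eps)$, extract sequences $h_k \to 0$ and $\lambda_k$ with $h_k^{-1}\lambda_k \in \mc{S}_j(\eps)$, $|\Im \lambda_k| \in [A^{-1}, A]$, together with normalized $u_k \in \mc{H}_{h_k}^N$ such that $f_k := (-h_kX - \lambda_k)u_k$ has $\mc{H}_{h_k}^N$-norm $o(h_k^2)$. Passing to a semiclassical defect measure $\mu$ on $T^*\mc{M}$ for $(u_k)$, the anisotropic weights localize $\supp\mu \subset E_u^* \cap \{p = \Im \lambda_0\}$, and propagation of singularities makes $\mu$ invariant under the lifted flow $\Phi_t$.

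The heart of the argument is the unstable derivative. Let $U$ be a smooth generator of the orientable line bundle $E_u$, with commutator $[X, U] = -rU$ modulo $\mathbb{R}X$ for some smooth function $r$; flow-invariance of $\mu$ together with Birkhoff's theorem gives $\langle r\rangle_\mu \in [\nu_{\min}, \nu_{\max}]$. Setting $v_k := hU u_k$, a direct computation yields
\[ (-hX - \lambda_k - hr)\,v_k = hU f_k + \mc{O}_{\mc{H}_h^N}(h^2),\]
so $v_k$ satisfies a shifted semiclassical equation. Pairing the original equation with $u_k$ in $L^2$ with the $X$-invariant contact volume $\alpha \wedge d\alpha$ (which exists precisely because the flow is contact, and makes $X$ divergence-free) and using a positive commutator argument with $(hU)^*(hU)$, one derives an energy identity whose semiclassical limit reads
\[ \big(2\Re(h_k^{-1}\lambda_k) + \langle r\rangle_\mu\big)\,\mu(E_u^*) \leq o(1).\]
For $s := h_k^{-1}\lambda_k \in \mc{S}_0(\eps)$ this immediately forces $\mu = 0$, since $\langle r\rangle_\mu \geq \nu_{\min}$ and $\Re s > -\nu_{\min}/2 + \eps$; combining with the embedding $\mc{H}_h^N \hookrightarrow H_h^{-N}$ and the $E_u^*$-concentration contradicts $\|u_k\|_{\mc{H}_h^N} = 1$. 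For $s \in \mc{S}_1(\eps)$ the shifted parameter $s + \langle r\rangle_\mu$ has real part in $(\eps, \nu_{\min} - \nu_{\max}/2 - \eps) \subset \{\Re > 0\}$ by the pinching $\nu_{\min} > \nu_{\max}/2$; standard $L^2$ resolvent bounds in the resonance-free half-plane then give $v_k \to 0$ in $L^2$, so $u_k$ is microlocally a zero mode of $U$, and applying the energy identity to the resulting reduced equation with the improved expansion rate $\nu_{\max}$ (replacing $\nu_{\min}$ because the zero-mode measure saturates the upper bound) yields $2\Re s + \nu_{\max} > 0$, contradicting $\Re s < -\nu_{\max}/2 - \eps$.

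The main difficulty will be establishing the energy identity with sharp remainder control in the anisotropic semiclassical calculus---in particular, commuting $hU$ past the weight $e^{-NG_h^w}$ and tracking errors near $E_s^*$ where the weight is microlocally singular in $h$-scale. A secondary subtlety is that the function $r$ depends on the smooth choice of generator $U$ (the Anosov splitting itself is only H\"older regular), so independence of the Birkhoff average $\langle r\rangle_\mu$ on this choice must be verified by a coboundary argument using the $\Phi_t$-invariance of $\mu$. Finally, the pinching hypothesis $\nu_{\min} > \nu_{\max}/2$ enters at two places: ensuring $\mc{S}_1(\eps)$ is non-empty, and guaranteeing that a single unstable derivative translates the second band into the resonance-free region for $v_k$, so that the two-step argument succeeds.
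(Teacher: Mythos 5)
Your high-level architecture --- contradiction, semiclassical defect measures, escape-function spaces, and invoking the Faure--Guillarmou horocyclic identity $U_-u=0$ --- matches the paper's strategy, but the mechanism you propose for pinning down the band location has a genuine gap. The proposed energy identity $(2\Re s+\langle r\rangle_\mu)\,\mu(E_u^*)\le o(1)$, obtained from a positive commutator with $(hU)^*(hU)$, cannot work as stated. The operator $(hU)^*(hU)$ has principal symbol $|p_-|^2$, which vanishes identically on $\Gamma_+=E_u^*\oplus E_0^*\supset\supp\mu$, so $\langle(hU)^*(hU)u_k,u_k\rangle\to\int|p_-|^2\,d\mu=0$: the pairing is a subleading quantity and the semiclassical limit of the claimed identity degenerates to $0=0$. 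Moreover, even working at next order, the commutator $[{-}ihX,(hU)^*(hU)]=2ih\,(hU)^*r\,(hU)$ produces a relation of the schematic form $(\gamma-\langle r\rangle)\|hUu\|^2=o(1)$ once the $\tfrac{1}{2i}$ in $\Im\langle\cdot,\cdot\rangle$ is accounted for: the factor of two cancels, and this route would at best place a band near $\Re s\approx-\nu$, not near $\Re s\approx-\nu/2$.

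The idea missing from your argument is the \emph{transversal Lipschitz regularity} of $\mu$ at the trapped set $E_0^*$. The paper uses the unstable derivative not to build a positive commutator, but to show via the propagation estimate of Lemma~\ref{lemm:averageestimate} that $\mu$ is essentially invariant under the lifted horocyclic flow $e^{tH_{p_-}}$ near $E_0^*$. Because the contact form forces $H_{p_-}p_+\neq 0$ there (transversality of the horocyclic lift to the trapped set, coming from $d\alpha(U_-,U_+)\neq 0$), this yields $\mu(\mc{U}_\delta\cap\{|p_+|<\delta_0\})\asymp\delta_0$ (Lemma~\ref{lemm:lipschitz}). Comparing this linear scaling with the flow invariance $(\Phi_t)_*\mu=e^{-2\gamma t}\mu$ and with the hyperbolic contraction $|p_+|\circ\Phi_{-t}\asymp e^{-\nu t}|p_+|$ is what forces $2\gamma\approx\nu$ --- this is the source of the factor of two and of the band at $-\nu/2$. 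No Mourre-type estimate encodes this.

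Two further remarks. You treat the H\"older regularity of $U_-$ as a ``secondary subtlety,'' but there is no smooth generator of $E_u$; the paper must regularize $U_-$ at scale $h^\rho$ and re-prove the propagation estimates in an exotic symbol class $S^m_{h,\rho,k}$ in which $x$-derivatives lose powers of $h^{-\rho}$ (Section 4 and Appendix~\ref{app:exoticcalc}) --- this occupies a large fraction of the paper. Finally, your treatment of $\mc{S}_1$ (shifting $s$ by $\langle r\rangle_\mu$ and invoking $L^2$ resolvent bounds in a resonance-free half-plane for the transport operator $-X-r$) is not a coherent step: the $L^2$-spectral theory of $-X-r$ is governed by the pointwise range of $r$ along orbits, not by $\langle r\rangle_\mu$. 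In the paper the second strip follows from the same Lipschitz estimate, using the \emph{lower} bound $\mu(\mc{U}_\delta\cap\{|p_+|<\delta_0\})\gtrsim\delta_0$; this is precisely where the constraint $-\gamma>-\nu_{\min}$ and the stronger quasimode bound $\beta\geq 2$ (hence $N\geq 1$) are used, via Proposition~\ref{regularitymu} and Lemma~\ref{lemm:lipschitz}.
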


		\begin{figure}
             \centering
\begin{tikzpicture}[scale = 0.8, everynode/.style={scale=0.5}]
\tikzset{cross/.style={cross out, draw=black, minimum size=2*(#1-\pgflinewidth), inner sep=0pt, outer sep=0pt},
%default radius will be 1pt. 
cross/.default={1pt}}

      	\fill[gray!30] (7.8, -3) rectangle (8.5, 3); %fill most right gray
      	\fill[gray!30] (5.5, -3) rectangle (7, 3); %fill most left gray

%the left graph
      	\draw[thick, ->] (1,-3) -- (1,3) node[above] {\small $\Im s$};
		\draw[thick, ->] (-3.5,0) -- (3.5,0) node[below] {\small $\Re s$};
      	
      	%\re \lambda = -\frac{3}{2}
      	\draw[thick] (-2, -3) -- (-2, 3); %node[below left] {\tiny $\Re (s) =$}
      	\draw[thick] (-3.5, -3) node[below] {\tiny $\Re(s) =$};
      	\draw[thick] (-2, -3) node[below] {\tiny $-\frac{3}{2}$};
		%\draw[thick] (-4, 0) node[cross=3pt,rotate=0,red]{} node[above]{\small $-1$};%furthest left
		\draw[thick] (-2, 0) node[cross=3pt,rotate=0,red]{}; %\lambda = -3/2
		\draw[thick] (-2.3, 0) node[cross=3pt,rotate=0,blue]{};%small 1
		\draw[thick] (-1.7, 0) node[cross=3pt,rotate=0,blue]{};%small 2
		%\draw[thick] (-1.7, 0) node[cross=3pt,rotate=0,blue]{};%small 1'
		%\draw[thick] (-1.9, 0) node[cross=3pt,rotate=0,blue]{};%small 2'
		\draw[thick] (-2, 0) node[cross=3pt,rotate=0,red]{}; %\lambda = -3/2
		
		\draw[thick] (-2, 1) node[cross=3pt,rotate=0,green]{}; %\lambda = -3/2 + i
    	\draw[thick] (-2, 1.5) node[cross=3pt,rotate=0,green]{}; %\lambda = -3/2 + i  	
    	\draw[thick] (-2, 2.3) node[cross=3pt,rotate=0,green]{}; %\lambda = -3/2 + i
    	\draw[thick] (-2, -1) node[cross=3pt,rotate=0,green]{}; %\lambda = -3/2 + i
    	\draw[thick] (-2, -1.5) node[cross=3pt,rotate=0,green]{}; %\lambda = -3/2 + i  	
    	\draw[thick] (-2, -2.3) node[cross=3pt,rotate=0,green]{}; %\lambda = -3/2 + i
    	
      	%\re \lambda = -\frac{1}{2}
      	\draw[thick] (0, -3) node[below] {\tiny $-\frac{1}{2}$} -- (0, 3);
		\draw[thick] (1, 0) node[cross=3pt,rotate=0,blue]{} node[above right]{\small $0$};%furthest right
		\draw[thick] (0, 0) node[cross=3pt,rotate=0,red]{};%special \lambda = 1/2
		\draw[thick] (0.3, 0) node[cross=3pt,rotate=0,blue]{};%small 1
		\draw[thick] (-0.3, 0) node[cross=3pt,rotate=0,blue]{};%small 2
		%\draw[thick] (1.7, 0) node[cross=3pt,rotate=0,blue]{};%small 1'
		%\draw[thick] (1.9, 0) node[cross=3pt,rotate=0,blue]{};%small 2'
		
		\draw[thick] (0, 1) node[cross=3pt,rotate=0,green]{}; %\lambda = -1/2 + i
    	\draw[thick] (0, 1.5) node[cross=3pt,rotate=0,green]{}; %\lambda = -1/2 + i  	
    	\draw[thick] (0, 2.3) node[cross=3pt,rotate=0,green]{}; %\lambda = -1/2 + i
    	\draw[thick] (0, -1) node[cross=3pt,rotate=0,green]{}; %\lambda = -1/2 + i
    	\draw[thick] (0, -1.5) node[cross=3pt,rotate=0,green]{}; %\lambda = -1/2 + i  	
    	\draw[thick] (0, -2.3) node[cross=3pt,rotate=0,green]{}; %\lambda = -1/2 + i

		\draw[thick] (-1, 0) node[cross=3pt,rotate=0,red]{} node[below]{\small $-1$};

		%\draw[dashed] (0,0) circle [radius=1]; %dashed circle

		\draw[thick] (1.8, 2) node[cross=3pt,rotate=0,green]{} node[right]{\tiny $= \mathrm{large\, eigenvalues}$};
		\draw[thick] (1.8, 1.7) node[cross=3pt,rotate=0,blue]{} node[right]{\tiny $= \mathrm{small\, eigenvalues}$};
		\draw[thick] (1.8, 1.4) node[cross=3pt,rotate=0,red]{} node[right]{\tiny $= \mathrm{special\, points}$};

		%the right graph
		
		\draw[thick, ->] (5.5,0) -- (12.5,0) node[below] {\small $\Re s$};
      	\draw[thick, ->] (10,-3) -- (10,3) node[above] {\small $\Im s$};
      	
      	%bands
      	
      	%\draw[dashed] (8.5, -3) node[below] {\tiny $\Re s = -\frac{\nu_{\min}}{2}$}-- (8.5, 3); %re s = - \nu_min/2
      	\draw[dashed] (8.5, -3) -- (8.5, 3); %re s = - \nu_min/2
      	\draw[dashed] (7.8, -3) -- (7.8, 3);%{\tiny $\Re s = -\frac{\nu_{\max}}{2}$}; %re s = - \nu_max
      	\draw[dashed] (7, -3) -- (7, 3); %re s = - \nu_min
      	\draw (9, -3) node[below] {\tiny $-\frac{\nu_{\min}}{2}$};
      	\draw (7.7, -3) node[below] {\tiny $-\frac{\nu_{\max}}{2}$};
      	\draw (6.4,-3) node[below] {\tiny $-\nu_{\min}$};
      	\draw (5, -3) node[below] {\tiny $\Re (s) = $};
      	\fill[pattern=north east lines, pattern color=blue] (8.5, -3) rectangle (10, 3); %fill most right dashed blue
      	\fill[pattern=north east lines, pattern color=blue] (7, -3) rectangle (7.8, 3); %fill most right dashed blue
      	\fill[pattern=north east lines, pattern color=blue] (7, -3) rectangle (7.8, 3); %fill most right dashed blue

    \node[font=\boldmath\fontsize{2.5mm}{3.5mm}\selectfont] at (7.4, 1.5) {$\mathcal{O}(h^{-2})$};
    \node[font=\boldmath\fontsize{4mm}{5mm}\selectfont] at (9.25, 1.5) {$o(h^{-2})$};

\end{tikzpicture}
             \caption{\small Left: resonance spectrum of a closed hyperbolic surface $\Sigma$ \cite{DFG}; colouring describes the nature of the resonance. Right: band structure of a contact Anosov flow on a $3$-manifold given by Theorem \ref{th:intro}. In dashed blue regions we have only finitely many resonances, with semiclassical resolvent bounds written using $\mathcal{O}$ and $o$ notation; gray regions are not covered by the theorem.}
             \label{fig:bands}
\end{figure}

As mentioned above, this result is contained in the set of results of Faure-Tsujii \cite{FaTs1,FaTs2,FaTs3}. In fact, the second strip we obtain with finitely many resonances is not optimal and should be ${\rm Re}(s)\in [-\frac{3\nu_{\min}}{2}+\eps,-\frac{\nu_{\max}}{2}-\eps]$ for all $\eps>0$ small. A band structure for Ruelle resonances was first obtained by Faure-Tsujii in \cite{FaTs0,FaTs2, FaTs3} for prequantum maps and Anosov flows using FBI transform methods and normal forms, including the case of a geometric potential (multiples of the unstable Jacobian). In the case of constant curvature or locally symmetric spaces, the bands are actually lines \cite{FaTs1,DFG,GHW,KuWe} (see Figure \ref{fig:bands}). In a related context, for operators with $r$-normally hyperbolic trapped set, Dyatlov \cite{Dy0,Dy} also proved some band structure for resonances.

Our proof is adapted to the case of a $1$-dimensional bundles $E_u,E_s$, and should be quite accessible for a reader with a semiclassical background (pseudo-differential calculus).
The aim of this note is to exhibit the mechanism behind the band structure in this simpler $3$-dimensional case. 
The existence of the first region $\{s\in \cc\ |\ {\rm Re}(s)>-\nu_{\min}/2+\eps\}$ with only finitely many Ruelle resonances for contact Anosov flows was first shown by Tsujii \cite{Ts1,Ts2} using FBI transform, and then by Nonnenmacher-Zworski \cite{NoZw} using normal hyperbolicity of the trapped set; both results also hold in higher dimension where 
$\nu_{\min}$ is replaced by $\lim_{t \to \infty} \inf_{x \in \mc{M}} \frac{1}{t}
\log \det(d\varphi_{t}(x)|_{E_u})$. Here, the method of proof is more similar to Dyatlov's approach \cite{Dy0,Dy} for $r$-normally hyperbolic trapped sets, and uses semiclassical measures and propagation estimates. It was also used in the setting of frame flows in constant curvature \cite{GK}. We also mention that the existence of a (non-explicit) small strip without Ruelle resonances in the contact Anosov setting was first proved by Dolgopyat \cite{Do} and Liverani \cite{Li}, and that there are two recent breakthroughs for $3$-dimensional Anosov flows: first for volume preserving Anosov flows by Tsujii \cite{Ts3}, and finally Tsujii-Zhang \cite{TsZh} recently proved the existence of a strip with no Ruelle resonances for all topologically mixing Anosov flows in dimension $3$.

The main idea of our proof is based on an observation made in a paper of the second author and Faure \cite{FaGu}, namely that the Ruelle resonant states $u\in \mc{H}^1$ with resonance in ${\rm Re}(s)>-\nu_{\min}$ satisfy $U_-u=0$ where $U_-$ is a vector field tangent to the unstable foliation (with regularity $C^{2-\eps}(\mc{M}; T\M)$). The outline of our proof is as follows. First, the existence of a sequence of Ruelle resonances $s_n$ (or of quasimodes) satisfying ${\rm Re}(s_n)\to -\gamma$ for some $-\gamma>-\nu_{\min}+\eps$ and ${\rm Im}(s_n)\to \infty$ implies the existence of a non-zero semiclassical measure $\mu$ on $T^*\mc{M}$, which, in microlocal terms, is the weak limit of a sequence of Ruelle resonant states (or quasimodes) $u_n\in \mc{H}^1$. By microlocal ellipticity it is supported in $\{(x,\xi)\in T^*\mc{M}\,|\, \xi(X)=-1\}$, 
and using the propagation of singularities estimate, one can see that for all $t\in \rr$
\begin{equation}\label{intro:invmuPhi} 
(\Phi_t)_*\mu=e^{-2\gamma t}\mu,
\end{equation}
where $\Phi_t(x,\xi)=(\varphi_t(x),(d\varphi_t(x)^{-1})^T\xi)$ is the symplectic lift of the flow $\varphi_t$ on $T^*\mc{M}$.
Next, using radial point propagation estimates related to the hyperbolicity of $\Phi_t$, we can see that 
\[{\rm supp}(\mu) \subset \Gamma_+:= E_u^*\oplus E_0^*,\] 
where $E_0^*:=\rr \alpha$ and $E_u^*$ is the annihilator of $\rr X \oplus E_u\subset T\mc{M}$. The last step is to use a version of the horocyclic invariance of the resonant states $u_n$ proved in \cite{FaGu}, that is $U_-u_n=0$ (or small in $h$). This invariance of $u_n$ by $U_-$ can be understood as an extra invariance of the measure $\mu$, which implies (by propagation estimates again) some regularity of $\mu$ at the trapped set $E_0^*$ of $\Phi_t$. This is where the contact assumption is important, it appears as a transversality of the symplectic lift of the horocyclic flow (i.e. the flow of $U_-$) with the trapped set $E_0^*=\rr\alpha$. Namely one shows that for all $\delta>0$ small and $\mc{U}_\delta$ a $\delta$-neighborhood of $E_0^*\cap \{\xi(X)=-1\}$ in $T^*\mc{M}$ 
\[ \mu(\mc{U}_\delta)\in (\delta/C,C\delta)\]
for some uniform $C>0$. Combining with \eqref{intro:invmuPhi} and using the hyperbolicity of $\Phi_t$:
\[\forall t\geq 0, \quad (\mc{U}_{\delta e^{-(\nu_{\max}+\eps)t}}\cap \Gamma_+) \subset  \Phi_{-t}(\mc{U}_\delta \cap \Gamma_+) \subset (\mc{U}_{\delta e^{-(\nu_{\min}-\eps)t}}
\cap \Gamma_+),\]
which implies by an elementary argument that $-\gamma$ must be in $[-(\nu_{\max} + \eps)/2,-(\nu_{\min} - \eps)/2]$. 
One of the main issues in this argument is that $U_-$ is only a $C^{2-\eps}(\M; T\M)$ vector field for all $\eps>0$, which complicates the use of microlocal methods. To circumvent the problem, we regularize $U_-$ at scale $h^\rho=|{\rm Im}(s_n)|^{-\rho}$ for some $0 < \rho < 1$, to make it a smooth, $h$-dependent, vector field $U_-^h$ in some exotic class of differential operators. The resonant states (or quasimodes) are not killed anymore by $U_-^h$ but they are good enough quasimodes to apply the reasoning above. We have to show, in particular, that the propagation estimates still hold in this exotic class (see Appendix \ref{app:exoticcalc}).\\  

In fact, since it does not involve more difficulties, we prove the more general result involving a smooth real potential $V$. We introduce the quantities
\begin{equation}\label{eq:V_maxV_min}
	V_{\max}:= \lim_{t \to \infty} \sup_{x \in \mc{M}} \frac{1}{t}\int_0^t V(\varphi_{s}(x))ds, \quad V_{\min}:=\lim_{t \to \infty} \inf_{x \in \mc{M}} \frac{1}{t}\int_0^t V(\varphi_{s}(x))ds.
\end{equation}
\begin{theo}\label{th:intro2}
Let $\mc{M}$ and $X$ satisfy the same assumptions as in Theorem \ref{th:intro},
and let $V\in C^\infty(\mc{M})$ be a real valued potential. For any $\varepsilon > 0$, there are only finitely many Ruelle resonances of $-X + V$ in the regions 
\[\begin{gathered}
\mc{S}_0(\eps):=\Big\{s\in \cc \, | \, \Re s >-\frac{\nu_{\min}}{2}+V_{\max} + \eps\Big\} \,\,\,\,\textrm{ and }\\
\mc{S}_1(\varepsilon) := \Big\{s\in \cc \, | \, \Re s \in (-\nu_{\min}+V_{\max}+\eps, -\frac{\nu_{\max}}{2}+V_{\min}-\eps)\Big\},
\end{gathered}\]
and the following bounds hold, as $|s| \to \infty$
\begin{align}\label{eq:classicalbounds}
\begin{split}
	\|(-X + V - s)^{-1}\|_{H^{\frac{1}{2}}(\mc{M}) \to H^{-\frac{1}{2}}(\mc{M})} &= o(|s|),\quad \,\,\,\,s\in \mc{S}_0(\varepsilon),\\
	\|(-X + V - s)^{-1}\|_{{H^{1}}(\mc{M}) \to H^{-1}(\mc{M})} &= \mc{O}(|s|), \quad \,\,s\in \mc{S}_1(\varepsilon).
\end{split}
\end{align}
More precisely, with the notation of Theorem \ref{th:intro}, then for all $\eps>0$ and $A>1$, and for all $\lambda \in \cc$ satisfying $|{\rm Im}(\lambda)|\in [A^{-1},A]$, we have as $h \to 0$% and $h^{-1}\lambda\in \mc{S}_\eps$
\begin{align}\label{eq:semiclassicalbounds}
\begin{split}
	\|(-hX + hV - \lambda)^{-1}\|_{\mc{H}_h^{N} \to \mc{H}_h^{N}} &= o(h^{-2}),\quad \,\,\,\,h^{-1}\lambda \in \mc{S}_0(\varepsilon), \quad N \geq \frac{1}{2},\\
	\|(-hX + hV - \lambda)^{-1}\|_{\mc{H}_h^{N} \to \mc{H}_h^{N}} &= \mc{O}(h^{-2}), \quad \,\,h^{-1}\lambda \in \mc{S}_1(\varepsilon), \quad N \geq 1.
\end{split}
\end{align}
\end{theo}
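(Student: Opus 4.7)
The plan is to run the semiclassical defect-measure argument sketched for Theorem \ref{th:intro}, with the potential $V$ entering only as a smooth zeroth-order multiplier that perturbs the transport identity and shifts the bands by the Birkhoff extrema $V_{\max},V_{\min}$. Arguing by contradiction, a failure of either \eqref{eq:semiclassicalbounds} or of the claimed finiteness of resonances in $\mc{S}_j(\eps)$ produces a sequence $h_n\to 0$, $\lambda_n\in\cc$ with $h_n^{-1}\Re\lambda_n\to-\gamma$ and $|\Im\lambda_n|\in[A^{-1},A]$, and $L^2$-normalized quasimodes $u_n$ satisfying $(-h_nX+h_nV-\lambda_n)u_n=O_{L^2}(h_n^2)$. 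After extracting a nonzero semiclassical measure $\mu$ on $T^*\mc{M}$, microlocal ellipticity off the characteristic set $\Sigma=\{\xi(X)=-\Im\lambda\}$ (the $hV$ term is subprincipal) forces $\supp(\mu)\subset\Sigma$.

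Propagation of singularities gives the key modification relative to Theorem \ref{th:intro}. The transport identity $X|u|^2=(2V+2\gamma)|u|^2$, valid for exact resonant states, propagates to the quasimode setting and yields the Birkhoff-twisted invariance of $\mu$ under the symplectic lift $\Phi_t$: for each $t\in\rr$,
\[(\Phi_t)_*\mu=\exp\!\Big(-2\gamma t-2\!\int_0^t V\circ\varphi_{-\tau}\circ\pi\,d\tau\Big)\mu,\]
where $\pi:T^*\mc{M}\to\mc{M}$ is the projection. Radial point estimates, insensitive to the bounded perturbation $V$, then confine $\supp(\mu)\subset\Gamma_+=E_u^*\oplus E_0^*$. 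I would next regularize the only $C^{2-\eps}$ unstable vector field $U_-$ at scale $h^\rho$, $0<\rho<1$, to a smooth $h$-dependent $U_-^h$ in the exotic class of Appendix \ref{app:exoticcalc}, and prove $U_-^h u_n=o_{L^2}(h^\tau)$ for some $\tau>0$ via the horocyclic invariance of \cite{FaGu}; the smooth multiplier $V$ does not obstruct this Lie-derivative calculation. The contact assumption makes the lifted horocyclic flow transverse to $E_0^*=\rr\alpha$ at the trapped set, and combined with the $U_-^h$-invariance of $\mu$ this yields the linear transverse-volume estimate $\mu(\mc{U}_\delta)\in(\delta/C,C\delta)$ for $\delta$-neighborhoods $\mc{U}_\delta$ of $E_0^*\cap\Sigma$ in $T^*\mc{M}$.

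To conclude, I would combine this linear scaling with the hyperbolic inclusions
\[\mc{U}_{\delta e^{-(\nu_{\max}+\eps)t}}\cap\Gamma_+\subset\Phi_{-t}(\mc{U}_\delta\cap\Gamma_+)\subset\mc{U}_{\delta e^{-(\nu_{\min}-\eps)t}}\cap\Gamma_+\]
and the twisted invariance, bounding the Birkhoff cocycle by $(V_{\min}-\eps)t\leq\int_0^t V\circ\varphi_\tau\,d\tau\leq(V_{\max}+\eps)t$ uniformly in base point for $t$ large (valid by \eqref{eq:V_maxV_min}). After absorbing signs and $\eps$'s, this forces the two-sided constraint
\[\gamma\in\big[\tfrac{\nu_{\min}}{2}-V_{\max}-O(\eps),\ \tfrac{\nu_{\max}}{2}-V_{\min}+O(\eps)\big],\]
contradicting $-\gamma\in\mc{S}_0(\eps)\cup\mc{S}_1(\eps)$ for $\eps$ sufficiently small. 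The main obstacle will be the exotic-class horocyclic regularization: proving the quasimode estimate $U_-^h u_n=o_{L^2}(h^\tau)$ and verifying that propagation and radial point estimates remain quantitatively valid in the $h^\rho$-mollified class, which is precisely the content of Appendix \ref{app:exoticcalc}. The potential is entirely benign for the microlocal machinery, entering only through its Birkhoff cocycle which shifts the half-expansion thresholds $\nu_{\min}/2,\nu_{\max}/2$ by $V_{\max},V_{\min}$.
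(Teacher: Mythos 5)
Your strategy is the paper's: quasimodes, defect measure supported in $\Gamma_+\cap\{\xi(X)=-1\}$, twisted flow invariance, exotic regularization plus a horocyclic estimate to obtain a transverse Lipschitz bound, and finally the exponential squeeze. But the claim that the potential is ``entirely benign'' for the horocyclic step is a genuine gap. Since $[X-V,U_-]=-r_-U_-+U_-(V)$, the zeroth-order remainder $U_-(V)$ is \emph{not} proportional to $U_-$, so $U_-$ does not approximately intertwine with $P_h(\lambda_h)$, and the estimate $U_-^hu_n=o_{L^2}(h^\tau)$ you propose cannot be established. One must replace $U_-$ by $U_-^V:=U_-+\alpha_V$, where $\alpha_V$ solves the cohomological equation $(-X-r_-)\alpha_V=U_-(V)$; only then does $[-X+V,U_-^V]=r_-U_-^V$ hold (see \eqref{[X+V,U_-]} and \cite{FaGu}), giving $h(U_-^V)^hu_n=o(h)$ microlocally near $E_0^*$. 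Crucially, $\alpha_V$ has only $C^{\nu_{\min}/\nu_{\max}-}$ regularity, which tightens the admissible regularization exponent (you need $\rho>1/(2-\beta_0)$ with $\beta_0<\nu_{\min}/\nu_{\max}$, as in Lemma \ref{[X,U_-^h]}) and inserts the term $2\alpha_V$ into the horocyclic invariance \eqref{invariancep_-mu}. Without $\alpha_V$ you get at best $hU_-^hu_n=\mc{O}(h)$ even from $o(h^2)$ quasimodes, which yields only the upper half of the transverse Lipschitz bound of Lemma \ref{lemm:lipschitz}; the lower half, and hence the half of your ``two-sided constraint'' on $\gamma$ actually used in the $\mc{S}_1$ strip, would be lost.

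Two smaller points. The sign in your transport identity should be $X|u|^2=-2(\gamma+V)|u|^2$. Also, the quasimode strength differs in the two strips: disproving the $o(h^{-2})$ bound in $\mc{S}_0$ produces only $\mc{O}(h^2)$ quasimodes, whereas disproving the $\mc{O}(h^{-2})$ bound in $\mc{S}_1$ produces $o(h^2)$ quasimodes, and the lower Lipschitz bound requires the stronger $o(h^2)$. So the ``two-sided'' constraint on $\gamma$ is only available in $\mc{S}_1$; in $\mc{S}_0$ the paper uses only the upper Lipschitz bound, forcing $\mu\equiv 0$ near $E_0^*$, which already contradicts Lemma \ref{lemm:support}.
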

Here again the existence of a band structure for Ruelle resonances in the case of a potential is proved in \cite{FaTs1,FaTs2,FaTs3}.

\textbf{Acknowledgements.} This project has received funding from the European Research Council (ERC) under the European Union’s Horizon 2020 research and innovation programme (grant agreement No. 725967). We thank Semyon Dyatlov for several useful discussions and for suggesting to apply the method of \cite{Dy} in this setting.

\section{Anosov flows, anisotropic spaces and Ruelle resonances}
\subsection{Dynamical background}\label{sec:dynback}
Let $\mc{M}$ be a $3$-dimensional closed manifold equipped with a Riemannian metric $g$, which also induces a metric on $T^*\mc{M}$, the Sasaki metric.  
We will consider a smooth vector field $X$ on $\mc{M}$ and we assume that the flow $\varphi_t$ of $X$ is Anosov: 
there is a $d\varphi_t$ invariant splitting 
\[ T\mc{M}=E_0\oplus E_u\oplus E_s\]
where $E_0=\rr X$ is the flow direction, $E_s$ is the stable bundle defined by 
\[ v\in E_s \iff \exists C>0,\nu>0, \forall t\geq 0,\quad  \|d\varphi_t(v)\|\leq Ce^{-\nu t}\|v\| \]
and $E_u$ is the unstable bundle defined by 
\[ v\in E_u \iff \exists C>0,\nu>0, \forall t\leq 0,\quad  \|d\varphi_t(v)\|\leq Ce^{-\nu |t|}\|v\|. \]
The bundles $E_u$ and $E_s$ are $1$-dimensional vector bundles, which have H\"older regularity.
We say that the flow is a \emph{contact} Anosov flow if there is a smooth $1$-form $\alpha$
such that $\alpha(X)=1$, $\ker d\alpha=E_u\oplus E_s$ and $d\alpha$ is non-degenerate on $\ker \alpha$, or equivalently $\alpha\wedge d\alpha$ is a smooth volume form on $\mc{M}$. 
If the flow of $X$ is a contact Anosov flow, then the stable and unstable bundles $E_s, E_u$ have regularity $C^{2-\eps}(\mc{M})$ for all $\eps>0 $ by the result of Hurder-Katok \cite{HuKa}. For notational convenience we will denote $C^{k-}(\mc{M}):=\cap_{\eps>0}C^{k-\eps}(\mc{M})$ for $k\in\nn$. 
We shall assume that $E_s$ and $E_u$ are orientable, which means that there are two non-vanishing vector fields $U_\pm\in C^{2-}(\mc{M};T\mc{M})$ such that 
\[ E_s= \rr U_+, \quad E_u= \rr U_-.\] 
Note that the orientability condition for $E_u,E_s$ is satisfied for Anosov geodesic flows on orientable surfaces.
We can define a dual Anosov decomposition of $T^*\mc{M}$ 
\[ T^*\mc{M}=E_0^*\oplus E_u^*\oplus E_s^*, \quad \textrm{with } E_u^*(E_u\oplus \rr X)=0, \,\, E_s^*(E_s\oplus\rr X)=0\]
and $E_0^*=\rr \alpha$ is the annihilator of $E_u\oplus E_s$.
We will also define 
\begin{equation}\label{Gammapm}
\Gamma_+ := E_u^*\oplus E_0^* , \quad \Gamma_-=E_s^*\oplus E_0^*, \quad K=\Gamma_+\cap\Gamma_-=E_0^*
\end{equation}
that we call the outgoing tail, the incoming tail and the trapped set for $X$, respectively.

Let us now recall a couple of properties about $U_\pm$ from \cite[Lemma 2.2]{FaGu}: we have the following commutation formula%: $U_-$ and $U_+$ spanning $E_u$ and $E_s$ respectively, such that
\begin{equation}\label{eq:commute0}
	[X, U_\pm] = \pm r_\pm U_\pm,
\end{equation}
and $r_\pm, U_\pm \in C^{2-}$. From \eqref{eq:commute0}, we conclude for all $t \in \mathbb{R}$
\begin{align}\label{eq:Upm}
\begin{split}
	d\varphi_{-t}(x) U_-(x) &= e^{-\int_{-t}^0 \varphi_s^*r_-(x) ds} U_-(\varphi_{-t}(x)),\\
	d\varphi_{t}(x) U_+(x) &= e^{-\int_{0}^t \varphi_s^*r_+(x) ds} U_+(\varphi_t(x)).
\end{split}
\end{align}
We note that both $U_-$ and $r_-$ are not uniquely defined but the large time average of $r_-$ along orbits are intrinsic to $X$. By Fekete's lemma, we may define the following finite, positive quantities
\begin{align}\label{eq:muminmax}
	\nu_{\min} := \lim_{t \to \infty} \inf_{x \in \mc{M}} \frac{1}{t}\int_0^t r_-(\varphi_{-s}(x))ds,\quad 
	\nu_{\max} := \lim_{t \to \infty} \sup_{x \in \mc{M}} \frac{1}{t}\int_0^t r_-(\varphi_{-s}(x))ds.
\end{align}
Note that in the contact case, one also has 
\begin{equation}\label{eq:muminmax2}
\nu_{\min} = \lim_{t \to \infty} \inf_{x \in \mc{M}} \frac{1}{t}\int_0^t r_+(\varphi_{s}(x))ds,\quad 
	\nu_{\max} = \lim_{t \to \infty} \sup_{x \in \mc{M}} \frac{1}{t}\int_0^t r_+(\varphi_{s}(x))ds.
\end{equation}
Then for each $\varepsilon > 0$, there exists $C_\varepsilon > 0$ such that for all $t \geq 0$ and $x\in \mc{M}$
\[C_\varepsilon^{-1} e^{-(\nu_{\max} + \varepsilon)t} \leq |d\varphi_{\pm t}U_\pm (x)| \leq C_\varepsilon e^{-(\nu_{\min} - \varepsilon)t}.\]
 
\subsection{Anisotropic space and extension of the resolvent of $X$}

Let $V\in C^\infty(\mc{M})$ be a potential. We want to prove a spectral gap with resolvent estimates at high frequency for $-X+V$. 
It is convenient to make the semiclassical rescaling  
\begin{equation}\label{eq:P_h}
	P_h:=-ihX+ihV, \quad P_h(\la)=P_h-i\la 
\end{equation}
where $h>0$ is a small parameter and $\la \in \cc$. 
The semiclassical principal symbol of $P_h$ is given by
\begin{equation}\label{Hamiltonian} 
p(x,\xi)=\xi(X).
\end{equation}
We will denote by $\Phi_t$ the Hamiltonian flow at time $t$ of $p$: notice that 
\[\Phi_t(x,\xi)=(\varphi_t(x),(d\varphi_t(x)^{-1})^T\xi). \]
The Hamiltonian vector field of $p$ will be denoted by $H_p$ so that $\Phi_t=e^{tH_p}$.
In \cite{FaSj}, Faure and Sj\"ostrand construct 
a family of Hilbert spaces, called anisotropic Sobolev spaces, using variable order pseudo-differential operators. Another presentation is given by Dyatlov-Zworski \cite{DyZw}, where a semiclassical parameter is included. We recall a few results about these spaces and the spectral properties of $P_h$ acting on them, we refer to \cite{FaRoSj,FaSj,DyZw} for more details.

If $m\in S^0(T^*\mc{M})$ is a symbol of degree $0$ on $T^*\mc{M}$, i.e. satisfying the bounds in local coordinates (for some constants $C_{\alpha,\beta}>0$)
\[ |\pl_x^{\alpha}\pl_\xi^\beta m(x,\xi)|\leq C_{\alpha,\beta}\cjg \xi\cjd^{-|\beta|},\]
we denote by $\Psi_h^m(\mc{M})$ the space of semiclassical pseudo-differential operators of order $m$ as defined in \cite[Appendix]{FaRoSj} (see also \cite{DyZw} for the $h$-dependent version): these are operators which have the form in local coordinates 
\[ Au(x)=(2\pi h)^{-3}\int e^{\frac{i}{h}(x-y)\xi}a(x,\xi)u(y)dyd\xi\]  
where $a\in S^m(T^*\mc{M})$ is a symbol of order $m$, ie. it satisfies local bounds for each $\eps>0$ small
\[ |\pl_x^{\alpha}\pl_\xi^\beta a(x,\xi)|\leq C_{\alpha,\beta,\eps}\cjg \xi\cjd^{m(x,\xi)-(1-\eps)|\beta|}.\] 
We also fix  a semiclassical quantization ${\rm Op}_h$ on $\mc{M}$ mapping symbols to operators acting on $L^2$ (see \cite{Zw} and \cite[Appendix E]{DyZw}). The space $\Psi_h^{\rm comp}(\mc{M})$ denotes the space of compactly microsupported operators, defined as the space of operators of the form ${\rm Op}_h(a)+h^\infty \Psi_h^{-\infty}(\mc{M})$ 
with $a\in C_c^\infty(T^*\mc{M})$. We shall denote by $\sigma: \Psi_h^m(\mc{M})\to S^m(T^*\mc{M})/hS^{m-1+\eps}(T^*\mc{M})$ the semiclassical principal symbol map, satisfying $\sigma({\rm Op}_h(a)) - a \in hS^{m-1+\eps}(T^*\mc{M})$ for all $\eps>0$. Finally, let $H_h^s:=(1+h^2\Delta_g)^{-s/2}L^2(\mc{M})$ be the semiclassical Sobolev space of order $s$ (here $\Delta_g$ is the Riemannian Laplacian of $g$ on $\mc{M}$).

First, by \cite{FaSj} there exists an \emph{escape function} $G\in C^\infty(T^*\mc{M})$
of the form 
\[G(x,\xi)=m(x,\xi)\log f(x,\xi)\]
where $m\in C^\infty(T^*\mc{M};[-1,1])$ is homogeneous of degree $0$ in the fiber variable $\xi$ for $|\xi|>1$, $f>0$ is homogeneous of degree $1$ in the fiber variable for $|\xi|>1$ and satisfies the following properties: there is a $C_m>0$ such that  
\[ \left\{\begin{array}{l}
m(x,\xi)=-1 \textrm{ in a conical neighborhood }V_u \textrm{ of }E_u^*,\\
m(x,\xi)=1 \textrm{ in a conical neighborhood }V_s \textrm{ of }E_s^*, \\
H_pm\leq 0 \textrm{ on }T^*\mc{M}, \\
H_pG<-C_m \textrm{ outside a conical neighborhood } V_0 \textrm{ of }E_0^*.
\end{array}\right.\] 
We can thus define the anisotropic Sobolev space of order $N\in \rr$ to be, for $h$ small enough
\[\mc{H}_h^{NG}:={\rm Op}_h(e^{NG})^{-1}L^2(\mc{M}).\]
Here ${\rm Op}_h(e^{NG})$ belongs to the class of semiclassical pseudodifferential operators $\Psi_h^{Nm}(\mc{M})$ of variable order $m$, and it is invertible in the class of semiclassical pseudodifferential operators with inverse ${\rm Op}_h(e^{NG})^{-1}\in \Psi_h^{-Nm}(\mc{M})$ if $h>0$ is small enough. We notice that, since 
${\rm Op}_h(\cjg \xi\cjd^{-N}){\rm Op}_h(e^{NG})\in \Psi_h^0(\mc{M})$, there is $C>0$ such that for all $u\in C^\infty(\mc{M})$ and $h>0$
\begin{equation}\label{eq:isotropic-anisotropic}
	\frac{1}{C}\|u\|_{H_h^{-N}}\leq \|u\|_{\mc{H}_h^{NG}}\leq C\|u\|_{H_h^N}.
\end{equation}

The following result was proved in \cite{FaSj} (see also \cite{DyZw})
\begin{prop}\label{Faure-Sjostrand}
There is\footnote{It can be checked, by inspecting the proof, that one can choose $c_{X}=\nu_{\min}$ and $c_V=V_{\max}$.} $c_{X}>0$, $c_V\in\rr$ such that the operator $P_h(h\la):\mc{D}(P_h)\subset \mc{H}_h^{NG}\to \mc{H}_h^{NG}$ is Fredholm in the region ${\rm Re}(\la)>c_V-c_{X}N$, where $\mc{D}(P_h)=\{u\in \mc{H}_h^{NG}\ |\ P_hu\in \mc{H}_h^{NG}\}$. Its inverse $(P_h-i\la h)^{-1}$, 
called the resolvent of $X$, is a meromorphic family of bounded operators on $\mc{H}_h^{NG}$ and the poles are called Ruelle resonances.
\end{prop}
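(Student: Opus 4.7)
My plan is to follow the conjugation approach of Faure-Sj\"ostrand \cite{FaSj}, see also \cite{DyZw}. Since $A := {\rm Op}_h(e^{NG}) \in \Psi_h^{Nm}(\mc{M})$ is invertible for $h$ small, with $A^{-1} \in \Psi_h^{-Nm}(\mc{M})$, and $\mc{H}_h^{NG} = A^{-1} L^2(\mc{M})$ by definition, the Fredholm property of $P_h(h\lambda)$ on $\mc{H}_h^{NG}$ is equivalent to Fredholmness of the conjugated operator $\tilde P_h(\lambda) := A P_h(h\lambda) A^{-1}$ on $L^2(\mc{M})$, and likewise for inversion.

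The first step is to compute the principal and subprincipal symbols of $\tilde P_h(\lambda)$. Using $A P_h A^{-1} - P_h = -A[A^{-1}, P_h]$ together with the semiclassical commutator rule, one finds, modulo $O(h^2)$,
\[
\sigma(\tilde P_h(\lambda)) = p(x,\xi) + ih\,\bigl(V(x) + N H_p G(x,\xi) - \lambda\bigr),
\]
where $p(x,\xi) = \xi(X)$. The characteristic variety $\Sigma = \{p = 0\}$ contains $E_u^* \cup E_s^*$ but not $E_0^* = \rr\alpha$, since $\alpha(X) = 1$ makes $p$ non-vanishing on $E_0^* \setminus 0$.

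The Fredholm property then follows from an a priori estimate of the form $\|u\|_{L^2} \leq C\|\tilde P_h(\lambda) u\|_{L^2} + C\|u\|_{H_h^{-M}}$, pieced together from three microlocal estimates on $T^*\mc{M}$: (i) a semiclassical elliptic estimate off $\Sigma$, including a conic neighborhood of $E_0^*$; (ii) radial-point estimates at the radial sets $E_u^*$ and $E_s^*$ of $\Phi_t$, where the escape function has orders $-N$ and $+N$ respectively; (iii) a propagation-of-singularities / positive-commutator estimate on $\Sigma \setminus (E_u^* \cup E_s^*)$, justified by $H_pG \leq -C_m$. Matching the thresholds in (ii) translates into the condition $\Re\lambda > c_V - c_X N$ for suitable $c_X > 0, c_V \in \rr$. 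The analogous estimate for the formal adjoint $\tilde P_h(\lambda)^*$ (obtained by reversing time and using the escape function $-G$) gives Fredholmness of index zero.

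For meromorphic continuation of $\tilde P_h(\lambda)^{-1}$, I invoke the analytic Fredholm theorem, which requires invertibility at one point. For $\Re\lambda_0$ large enough, say $\Re\lambda_0 > \|V\|_\infty$, the explicit integral formula
\[
(-X + V - \lambda_0)^{-1} f = -\int_0^\infty e^{-\lambda_0 t + \int_0^t V\circ\varphi_{-s}\,ds}\, f\circ \varphi_{-t}\, dt
\]
converges absolutely on $L^2(\mc{M})$ and defines a bounded inverse; conjugating by $A$ gives invertibility of $\tilde P_h(\lambda_0)$ at such a $\lambda_0$. The main technical obstacle is implementing the radial-point estimates carefully in the semiclassical variable-order calculus; moreover, attaining the sharp constants $c_X = \nu_{\min}$ and $c_V = V_{\max}$ (the footnote's claim) requires choosing the escape function $G$ so that $-H_pG$ reflects the dynamical averages $\tfrac{1}{t}\int_0^t r_\pm\circ\varphi_{\pm s}\, ds$ appearing in \eqref{eq:muminmax}, rather than pointwise maxima, and correspondingly tuning the potential correction.
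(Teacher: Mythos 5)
Your proposal is correct and reproduces the standard Faure--Sj\"ostrand argument, which is exactly what the paper relies on: the paper does not prove Proposition~\ref{Faure-Sjostrand} itself but cites \cite{FaSj} and \cite{DyZw}, and your sketch (conjugation by ${\rm Op}_h(e^{NG})$, computation of the damping term $ihNH_pG$, the three-part a priori estimate using ellipticity near $E_0^*$, radial-point estimates at $E_u^*$ and $E_s^*$ with thresholds giving $\Re\lambda > c_V - c_X N$, the adjoint argument for index zero, and invertibility via the explicit propagator integral for $\Re\lambda$ large followed by analytic Fredholm theory) is faithful to that reference proof.
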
 

\section{Semiclassical measures associated to sequences of resonances and quasimodes}

In this Section, we shall see that the presence of an infinite number of
Ruelle resonances $\la_n$ with ${\rm Re}(\la_n)\to -\gamma$ and $|\la_n|\to \infty$ implies the existence of a non-trivial measure $T^*\mc{M}$, with certain invariance properties with respect to $H_p$, and more generally the same holds for good quasimodes. This will allow us to prove gaps of resonances and bounds on the resolvent. The idea is to rescale the equation $(-X + V -\la_n)u_n=r_n$ with $r_n$ small in some appropriate norm by setting $h_n=1/|\la_n|$ so that, according to \eqref{eq:P_h}, $(P_{h_n}-ih_n\la_n)u_n=ih_nr_n$ and to view this last equation as a semiclassical problem.
This method follows some ideas developed by Dyatlov in \cite{Dy} for operators with $r$-normally hyperbolic trapped set. 
\begin{equation} \label{eq:A1}
	    \begin{minipage}{0.85\textwidth}
	        {\bf Assumption 1.} For some $\gamma > 0$ and $N > 0$, consider a sequence 
\begin{equation*}\label{def:la_h}
\Re \la_h=-h\gamma+o(h), \quad \Im \la_h = 1 + o(1), \quad -\gamma > V_{\max} - N\nu_{\min},
\end{equation*} 
where $h>0$ is a parameter going to $0$, which can be discrete or continuous. 
Let $u_h\in \mc{D}(P_h)\subset \mc{H}_h^{NG}$ be a sequence (as $h\to 0$) of quasimodes for $P_h(\la_h)$:% satisfying  
\begin{equation*}\label{hypuh}
\|u_h\|_{\mc{H}_h^{NG}}=1,\quad \|P_h(\la_h)u_h\|_{\mc{H}_h^{NG}}=o(h^{\beta}),
\end{equation*}
for some $\beta> 0$.
	    \end{minipage}\tag{A1}
	\end{equation}
 
\noindent First, by applying \cite[Theorem E.42]{DyZw}, we directly have the 
\begin{lemm}\label{lemm:semiclassicalmeasureexistence}
Under the assumptions of \eqref{eq:A1}, there is a subsequence $u_{h_j}$ and a Radon measure measure $\mu$ on $T^*\mc{M}$ such that for each $A_h\in \Psi_h^{{\rm comp}}(\mc{M})$
\[ \cjg A_{h_j}u_{h_j},u_{h_j}\cjd_{L^2}\to \int_{T^*\mc{M}}\sigma(A) d\mu \textrm{ as }j\to \infty .\]
\end{lemm}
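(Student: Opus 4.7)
The plan is to apply the standard semiclassical defect measure construction, essentially invoking \cite[Theorem E.42]{DyZw}; the only adaptation is that $u_h$ is bounded in the anisotropic space $\mc{H}_h^{NG}$ rather than in $L^2$, and this is absorbed entirely by the compact microsupport of the test operators.

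First, I would exploit the norm equivalence \eqref{eq:isotropic-anisotropic}: the normalization $\|u_h\|_{\mc{H}_h^{NG}}=1$ yields a uniform bound $\|u_h\|_{H_h^{-N}}\leq C$. For any $A_h\in \Psi_h^{\rm comp}(\mc{M})$ with symbol $a\in C_c^\infty(T^*\mc{M})$, a standard mapping property of the semiclassical calculus provides the uniform bound $\|A_h\|_{H_h^{-N}\to H_h^{N}}=\mc{O}(1)$. Interpreting $\cjg A_h u_h, u_h\cjd_{L^2}$ as the duality pairing between $A_h u_h\in H_h^{N}$ and $u_h\in H_h^{-N}$, we obtain the uniform estimate $|\cjg A_h u_h, u_h\cjd_{L^2}|\leq C_A$ with $C_A$ depending only on a finite family of symbol seminorms of $a$.

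Next I would fix a countable collection $\{a_k\}_{k\in \nn}\subset C_c^\infty(T^*\mc{M})$ dense for uniform convergence on compact sets, and perform a diagonal extraction so that along some subsequence $h_j\to 0$ the numbers $\cjg \Op_{h_j}(a_k) u_{h_j}, u_{h_j}\cjd_{L^2}$ converge for every $k$. Combined with the uniform bound and density, this defines a bounded linear functional $\ell$ on $C_c(T^*\mc{M})$ by
\[
\ell(a):=\lim_{j\to \infty}\cjg \Op_{h_j}(a) u_{h_j}, u_{h_j}\cjd_{L^2}.
\]
The limit does not depend on the choice of quantization nor on the choice of a representative of the principal symbol, since two such choices differ by an element of $h\Psi_h^{-1+\eps}$ whose matrix coefficient against $u_h\in H_h^{-N}$ is $\mc{O}(h)$ (once again thanks to compact microsupport).

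Finally, positivity of $\ell$ on nonnegative $a\in C_c^\infty(T^*\mc{M})$ follows from the sharp G\aa rding inequality applied in the compactly microsupported calculus: factoring $\Op_h(a)=B_h^*B_h+hR_h$ with $B_h,R_h\in \Psi_h^{\rm comp}$ gives
\[
\cjg \Op_h(a) u_h, u_h\cjd_{L^2}\geq -Ch\|u_h\|_{H_h^{-N}}^2=\mc{O}(h),
\]
so $\ell(a)\geq 0$, and the Riesz-Markov theorem produces the Radon measure $\mu$ with $\ell(a)=\int a\, d\mu$. The only point that requires any genuine care is justifying the uniform operator bound $H_h^{-N}\to H_h^N$ in the first step and checking that symbol representatives modulo $hS^{-1+\eps}$ contribute negligibly at the end; both are standard consequences of compact microsupport, which is precisely why the authors simply cite \cite[Theorem E.42]{DyZw}.
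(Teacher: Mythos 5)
Your strategy is exactly the one the paper leaves implicit: the authors dispose of this lemma with a one-line citation to \cite[Theorem E.42]{DyZw}, and your proof amounts to unpacking that citation and checking that it survives the passage from $L^2$-boundedness to $H_h^{-N}$-boundedness of the family $u_h$. The reduction via the norm comparison \eqref{eq:isotropic-anisotropic}, the uniform bound $\|A_h\|_{H_h^{-N}\to H_h^N}=\mc{O}(1)$ for $A_h\in\Psi_h^{\rm comp}$, the diagonal extraction, the independence of quantization mod $h\Psi_h^{-1+\eps}$, and the invocation of Riesz--Markov are all correct and match the standard construction.

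The one step that, as written, does not quite go through is the positivity argument. You claim a factorization $\Op_h(a)=B_h^*B_h+hR_h$ with $B_h,R_h\in\Psi_h^{\rm comp}$ for $a\geq 0$. But if $a\in C_c^\infty(T^*\mc{M})$ is nonnegative, $\sqrt{a}$ is typically not smooth, so there is no candidate symbol for $B_h$; the sharp G\aa rding inequality is \emph{not} proved by an exact square-root factorization with an $\mc{O}(h)$ error in the usual Kohn--Nirenberg calculus (the proof goes through anti-Wick/FBI quantization, whose kernel is not compactly microsupported). Since $u_h$ is only in $H_h^{-N}$, you cannot directly invoke the $L^2$-operator inequality $\Op_h(a)\geq -Ch$ either. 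The clean fix is to cut off first: choose $\psi\in C_c^\infty(T^*\mc{M})$ with $\psi=1$ on a neighborhood of $\supp(a)$, set $v_h:=\Op_h(\psi)u_h$, note that $\|v_h\|_{L^2}=\mc{O}(1)$ and $\cjg \Op_h(a)u_h,u_h\cjd=\cjg \Op_h(a)v_h,v_h\cjd+\mc{O}(h^\infty)$, and then apply the standard $L^2$ sharp G\aa rding inequality to $v_h$. Alternatively, replace $a$ by $a+\eps\psi$ (whose square root \emph{is} smooth and compactly supported), obtain $\ell(a)\geq -\eps\,\ell(\psi)$ for every $\eps>0$, and let $\eps\to 0$. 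With either repair the rest of your proof is sound and yields the lemma.
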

We can also consider the sequence $\til{u}_{h_j}={\rm Op}_{h_j}(e^{NG})u_{h_j}$ which satisfies $\|\til{u}_h\|_{L^2}=1$
and thus one can consider its  semiclassical measure $\til{\mu}$ satisfying, as $j\to \infty$,
\[ \cjg A_{h_j}\til{u}_{h_j},\til{u}_{h_j}\cjd_{L^2}\to \int_{T^*\mc{M}}\sigma(A) d\til{\mu}.\]
\subsection{Support and first invariance properties of the semiclassical measure}

We first remark that the measure $\mu$ must satisfy some support properties and some invariance under the Hamiltonian flow of the $p(x,\xi)$ (defined in \eqref{Hamiltonian}). These are 
consequences of elliptic regularity and propagation of singularities.
\begin{lemm}\label{lemm:semiclassicalmeasureproperties}
The measures $\til{\mu}$ and $\mu$ satisfy $e^{2NG}d\mu=d\til{\mu}$, and 
\[\supp(\mu)\subset \{ (x,\xi)\in T^*\mc{M}\ |\ \xi(X)=-1\}.\]
Assume that $\beta\geq 1$ in \eqref{eq:A1}. If $p(x,\xi)=\xi(X)$ and $H_{p}$ is the Hamiltonian vector field of $p$, then for each $a\in C^\infty_c(T^*\mc{M})$
\begin{equation}\label{dampingeq}
 \int_{T^*\mc{M}}(H_{p}+2(\gamma+V))a \,d\mu=0.
 \end{equation}
\end{lemm}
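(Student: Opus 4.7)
The plan is to prove the three assertions separately.

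First, I would establish $d\til\mu = e^{2NG}d\mu$ directly from the definition $\til u_h = \Op_h(e^{NG})u_h$. For $A_h \in \Psi_h^{\comp}(\mc{M})$, the identity
\[
\cjg A_h \til u_h, \til u_h\cjd_{L^2} = \cjg \Op_h(e^{NG})^* A_h \Op_h(e^{NG}) u_h, u_h\cjd_{L^2}
\]
involves a composition that lies in $\Psi_h^{\comp}(\mc{M})$ with principal symbol $e^{2NG}\sigma(A)$; passing to the limit along the subsequence of Lemma \ref{lemm:semiclassicalmeasureexistence} would then give the claimed identity of measures.

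Next, for the support statement, I would run the standard elliptic regularity argument. Since $\Im\lambda_h = 1+o(1)$ and $\Re\lambda_h=o(1)$, one has $-i\lambda_h \to 1$ as $h\to 0$, so the full leading symbol of $P_h(\lambda_h) = -ihX + ihV - i\lambda_h$ is $p+1$, where $p(x,\xi) = \xi(X)$. For $a \in C_c^\infty(T^*\mc{M})$ with $\supp a \cap \{p=-1\} = \emptyset$, the symbol $b := a/(p - i\lambda_h)$ is compactly supported and uniformly bounded in $h$, and the symbolic calculus yields a parametrix
\[
\Op_h(b)P_h(\lambda_h) = \Op_h(a) + hE_h, \qquad E_h \in \Psi_h^{\comp}(\mc{M}),
\]
uniformly in $h$. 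Applying to $u_h$ and using $\|P_h(\lambda_h)u_h\|_{\mc{H}_h^{NG}} = o(h^\beta)$, together with the uniform $\mc{H}_h^{NG} \to H_h^N$ boundedness of compactly microsupported (hence smoothing) operators combined with \eqref{eq:isotropic-anisotropic}, would give $\|\Op_h(a)u_h\|_{H_h^N} = o(1)$. The $H_h^N$--$H_h^{-N}$ duality then forces $\cjg \Op_h(a)u_h, u_h\cjd \to 0$, so $\int a\, d\mu = 0$ and $\supp\mu \subset \{p=-1\}$.

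Finally, for the invariance \eqref{dampingeq} (using $\beta \geq 1$), the plan is to compute $\cjg [P_h, \Op_h(a)]u_h, u_h\cjd$ in two ways. Symbolically, $\frac{i}{h}[P_h, \Op_h(a)] = \Op_h(H_p a) + O_{\Psi_h^{\comp}}(h)$ (the $ihV$ contribution being $O(h^2)$), so the limit of $\frac{i}{h}\cjg [P_h, \Op_h(a)]u_h, u_h\cjd$ is $\int H_p a\, d\mu$. On the other hand, since $\lambda_h$ is a scalar, $[P_h, \Op_h(a)] = [P_h(\lambda_h), \Op_h(a)]$ and
\[
[P_h(\lambda_h), \Op_h(a)]u_h = P_h(\lambda_h)\Op_h(a)u_h - \Op_h(a)r_h,
\]
so transferring $P_h(\lambda_h)$ to the right slot yields
\[
\cjg [P_h, \Op_h(a)]u_h, u_h\cjd = \cjg \Op_h(a)u_h, P_h(\lambda_h)^* u_h\cjd + o(h^\beta).
\]
A direct calculation of the formal $L^2$-adjoint, using $\Re\lambda_h = -h\gamma + o(h)$ and choosing the volume form on $\mc{M}$ so that $X$ is divergence-free (e.g.\ $\alpha\wedge d\alpha$ in the contact setting), gives $P_h(\lambda_h)^* u_h = r_h - 2ih(\gamma + V)u_h + o(h)u_h$. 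Multiplying by $i/h$ and passing to the limit would yield $\int H_p a\, d\mu = -2\int (\gamma + V)a\, d\mu$, i.e.\ \eqref{dampingeq}.

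The hard part will be the careful bookkeeping of the formal adjoint and of all $L^2$ pairings: since $u_h \in \mc{H}_h^{NG}$ rather than $L^2$, each pairing must be reinterpreted as a duality between $H_h^N$ and $H_h^{-N}$, which is justified by the smoothing character of the compactly microsupported test operators $\Op_h(a), \Op_h(b), E_h$ combined with \eqref{eq:isotropic-anisotropic}.
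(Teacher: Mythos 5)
Your proof is correct and follows essentially the same route as the paper: part (1) via conjugation by $\Op_h(e^{NG})$ and the symbol calculus, part (2) via microlocal ellipticity for $P_h(\lambda_h)$ using that $-i\lambda_h\to 1$, and part (3) via the commutator-and-adjoint computation with $h^{-1}\Im P_h(\lambda_h)=\gamma+V+o(1)$ (you unfold the computation inside Proposition~\ref{prop:semiclassicalmeasurepropagation}, which the paper cites directly, but the content is the same). Your remark about working with the contact volume $\alpha\wedge d\alpha$ and reinterpreting all $L^2$ pairings as $H^N_h$--$H^{-N}_h$ dualities, using that the test operators are compactly microsupported and hence smoothing, is exactly the right justification.
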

\begin{proof} 
The first identity follows from the definition of $\mu,\til{\mu}$ for $A\in \Psi_h^{\rm comp}(\mc{M})$ 
\[\begin{split}
\cjg A_{h_j}\til{u}_{h_j},\til{u}_{h_j}\cjd_{L^2}&= \cjg {\rm Op}_{h_j}(e^{NG})^*A{\rm Op}_{h_j}(e^{NG})u_j,u_j\cjd_{L^2}\\
 & \to_{j\to \infty} \int_{T^*\mc{M}}\sigma({\rm Op}_{h_j}(e^{NG})^*A{\rm Op}_{h_j}(e^{NG}))d\mu=\int_{T^*\mc{M}}\sigma(A)e^{2NG}d\mu.
\end{split}\]
The semiclassical principal symbol of $P_h(\la_h)$ is $\xi(X) + 1 +o(1)$ thus 
\[\supp(\mu)\subset \{ (x,\xi)\in T^*\mc{M}\ | \ \xi(X)=-1\}\]
using microlocal ellipticity \cite[Theorem E.43]{DyZw}. 
We write ${\rm Im}(P_h(\la_h))=(P_h(\la_h)-P_h(\la_h)^*)/2i\in h\Psi^0_h(\mc{M})$, with 
\[ h^{-1}{\rm Im}(P_h(\la_h)) = \gamma + V + o(1).\]
By \cite[Theorem E.44]{DyZw} (or Proposition \ref{prop:semiclassicalmeasurepropagation}), we have that for each $a\in C_c^\infty(T^*\mc{M})$,
\eqref{dampingeq} holds.
\end{proof}

Next, we shall use propagation estimates to obtain information on the support of $\mu$ and establish that $\mu\not =0$.
\begin{lemm}\label{lemm:support}
Assume $\beta\geq 1$ in \eqref{eq:A1}, then the semiclassical measure $\mu$ satisfies 
\begin{equation}\label{supp2} 
\supp(\mu)\subset \Gamma_+ = E_0^*\oplus E_u^*.
\end{equation}
Moreover, if $K_{R}:= \{(x,\xi)\in T^*\mc{M}\,|\, d_g((x,\xi),E_0^*) \leq R\}$, 
we have $\mu(K_R)>0$ for any $R>0$. 
\end{lemm}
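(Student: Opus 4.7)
The strategy is to work with the conjugated quasimode $\til{u}_h := \Op_h(e^{NG})u_h$, which has $L^2$-norm uniformly bounded by \eqref{eq:isotropic-anisotropic}, together with its semiclassical measure $\til{\mu}$, a finite Radon measure on $T^*\mc{M}$ satisfying $d\til{\mu} = e^{2NG}\,d\mu$. Part~(a) will follow by combining the smallness of $e^{-2NG}$ in the conic region $V_s$ with a quantitative use of the backward flow invariance of $\mu$, and part~(b) by first showing $\mu\neq 0$ via sharp G\aa rding and then pushing a positive-mass subset of $\supp\mu \subset \Gamma_+$ onto $K$ along the backward flow.

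For part~(a), fix $(x_0, \xi_0) \notin \Gamma_+$. The $E_s^*$-component of $\xi_0$ is nonzero, so the Anosov estimates of Section~\ref{sec:dynback} give $\Phi_{-t}(x_0,\xi_0) \in V_s$ with $|\xi| \gtrsim e^{(\nu_{\min}-\eps)t}$ for $t$ large. Choosing a small open $U \ni (x_0,\xi_0)$ so that $\Phi_{-t}(U) \subset V_s$ lies at fiber scale $\gtrsim e^{(\nu_{\min}-\eps)t}$, and using $m=1$ and hence $e^{-2NG}\lesssim \cjg\xi\cjd^{-2N}$ on $V_s$,
\[
\mu(\Phi_{-t}(U)) = \int_{\Phi_{-t}(U)} e^{-2NG}\,d\til{\mu} \leq C\, e^{-2N(\nu_{\min}-\eps)t}.
\]
Integrating the distributional identity \eqref{dampingeq} along the Hamiltonian flow upgrades it to the pointwise density formula $(\Phi_t)_*\mu = \exp\bigl(-2\gamma t - 2\int_0^t V\circ\varphi_{-s}\,ds\bigr)\mu$, and the definition of $V_{\max}$ in \eqref{eq:V_maxV_min} yields $\mu(\Phi_{-t}(U)) \geq e^{-2(\gamma+V_{\max})t - o(t)}\mu(U)$. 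Chaining the two estimates,
\[
\mu(U) \leq C\, e^{\bigl(2(\gamma+V_{\max}) - 2N(\nu_{\min}-\eps)\bigr)t + o(t)},
\]
whose exponent is strictly negative by the assumption $-\gamma > V_{\max} - N\nu_{\min}$ in \eqref{eq:A1} for $\eps$ small. Letting $t \to \infty$ forces $\mu(U) = 0$, so $(x_0, \xi_0) \notin \supp\mu$.

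For part~(b), I first establish $\mu\neq 0$. The conjugated operator $\til{P}_h$ satisfies $\Im(\til{P}_h)/h = \gamma + V + NH_pG + o(1)$, and $H_pG \leq -C_m$ outside $V_0$; choosing $C_m$ close to $\nu_{\min}$, the assumption \eqref{eq:A1} gives $\gamma + V + NH_pG \leq -c < 0$ outside $V_0$. Sharp G\aa rding applied to $\Im\cjg \til{P}_h \til{u}_h,\til{u}_h\cjd = o(h)$ then yields $\|\Op_h(\chi)\til{u}_h\|_{L^2} = o(1)$ for every $\chi \in C^\infty_c(V_0^c)$; together with microlocal ellipticity of $\til{P}_h$ outside $\{\xi(X)=-1\}$, this confines $\supp\til{\mu}$ to the compact set $\overline{V_0}\cap\{\xi(X)=-1\}$. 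The absence of escape to infinity forces $\til{\mu}(T^*\mc{M}) = \lim\|\til{u}_h\|_{L^2}^2 = 1$, and since $e^{-2NG}$ is bounded below on compacta, $\mu \neq 0$. Pick $(x_0,\xi_0) \in \supp\mu \subset \Gamma_+$ (using part~(a)) and a small open $U$ with $\mu(U) > 0$; the contraction of the $E_u^*$-component under $\Phi_{-t}$ puts $\Phi_{-t}(U) \subset K_R$ for $t$ large, and the lower direction of the invariance formula gives
\[
\mu(K_R) \geq \mu(\Phi_{-t}(U)) \geq e^{-2(\gamma+V_{\max})t - o(t)}\mu(U) > 0.
\]

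The main technical obstacle is upgrading the distributional invariance \eqref{dampingeq} to the pointwise-on-densities formula for $(\Phi_t)_*\mu$ along orbits, and matching the exponential rates so that the anisotropic decay rate $2N(\nu_{\min}-\eps)$ strictly dominates the invariance growth rate $2(\gamma+V_{\max})$ — precisely what the pinching hypothesis in \eqref{eq:A1} provides. Uniform-in-$x$ control of $\int_0^t V(\varphi_{-s}(x))\,ds$ by $tV_{\max} + o(t)$ across $\mc{M}$ follows from the Fekete-type definitions of $V_{\max}, V_{\min}$ in \eqref{eq:V_maxV_min}.
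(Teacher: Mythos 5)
Your proof of the first assertion, $\supp(\mu)\subset\Gamma_+$, is a genuinely different route from the paper's and it works. The paper runs a radial \emph{source} estimate near $L=E_s^*\cap\partial\bbar{T}^*\mc{M}$ to kill $\mu$ in a conic neighborhood of $E_s^*$, then propagates along $\Phi_{-t}$. You instead exploit that $\til{\mu}$ is a finite measure, that $e^{-2NG}\lesssim\cjg\xi\cjd^{-2N}$ on $V_s$, and chain the upper bound $\mu(\Phi_{-t}(U))\leq Ce^{-2N(\nu_{\min}-\eps)t}$ against the lower bound $\mu(\Phi_{-t}(U))\geq e^{-2(\gamma+V_{\max})t-o(t)}\mu(U)$ coming from the integrated form of \eqref{dampingeq}. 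The exponent is negative precisely under the pinching hypothesis in \eqref{eq:A1}, and that is a clean, purely measure-theoretic way to get the support statement. The trade-off is that this method does not yield the quantitative $o(h^{\beta-1})$ microlocal bound recorded in the remark after Lemma~\ref{lemm:support}, which the paper obtains for free from the radial source estimate and actually uses again in Proposition~\ref{regularitymu}.

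The second assertion, $\mu(K_R)>0$, has a real gap. You want to deduce $\mu\neq 0$ from sharp G\aa rding applied to $\Im\cjg\til P_h\til u_h,\til u_h\cjd = o(h)$. But the symbol $b=\gamma+V+NH_pG$ is not globally of one sign: it is $\leq -c<0$ outside $V_0$ but can be strictly positive inside $V_0$. A positive commutator/G\aa rding argument therefore does not yield $\|\Op_h(\chi)\til u_h\|=o(1)$ for $\chi\in C_c^\infty(V_0^c)$; the best one gets from $\langle B\til u_h,\til u_h\rangle=o(1)$ by splitting $b$ into its signed parts is a comparison $\til\mu(V_0^c)\lesssim\til\mu(V_0)$, not vanishing of $\til\mu$ off $V_0$. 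More importantly, even granting that $\supp\til\mu\subset\overline{V_0}\cap\{p=-1\}$ is compact, that does not imply ``absence of escape to infinity''. One still has to rule out that $\til u_h$ leaks mass to fiber infinity along $\Gamma_+\cap\{p=-1\}$ as $h\to 0$, i.e. near the radial sink $L'=E_u^*\cap\partial\bbar{T}^*\mc{M}$, where the rescaled symbol $\cjg\xi\cjd^{-1}p$ vanishes so elliptic estimates fail. That is exactly where the paper invokes the radial \emph{sink} estimate \eqref{radialsink} from \cite[Theorem E.54]{DyZw}, combined with propagation and elliptic estimates, to obtain the chain $1=\|u_h\|_{\mc{H}^{NG}_h}\leq C\|Qu_h\|_{\mc{H}^{NG}_h}+o(1)$ for a compactly microsupported $Q$; this is the step that actually forces $\mu\neq 0$, and your argument does not provide a substitute. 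You will need the radial sink estimate near $L'$, not a global G\aa rding inequality, to close this part.
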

\begin{proof} We fix $N_1>0$ an arbitrarily large constant. 
Let us consider the fiber compactification $\bbar{T}^*\mc{M}$ of $T^*\mc{M}$ (see \cite[Appendix E]{DyZw} for the definition). The bundles $E_u^*,E_s^*$ and $E_0^*$ extend to $\bbar{T}^*\mc{M}$ naturally (by taking their closures) and we let 
$L:=E_s^*\cap \pl\bbar{T}^*\mc{M}$ and $L':=E_u^*\cap \pl\bbar{T}^*\mc{M}$. The set $L$ is a radial source and $L'$ a radial sink in the terminology of \cite[Definition E.50]{DyZw}.
First, we will show that $\mu$ has no mass for $\xi$ large enough in a conic neighborhood $V_s$ of $E_s^*$. This can be proved using the radial point estimates of \cite[Theorem E.52]{DyZw}.  There is $s_0\in\rr$ such that for all $s>s_0$ and $N_1>0$, and $B_1\in \Psi^0_h(\mc{M})$ with $L\subset {\rm ell}_h(B_1)$, 
there is $A_0\in \Psi^0_h(\mc{M})$ elliptic in a neighborhood $U$ of $L$ in $\bbar{T}^*\mc{M}$, and $C>0$ so that for all $u\in H_h^s(\mc{M})$ such that $B_1P_h(\la_h)u\in H_h^s$
\begin{equation}\label{eq:radialsource}
	 \|A_0u\|_{H_h^s}\leq Ch^{-1}\|B_1P_h(\la_h)u\|_{H_h^{s}}+Ch^{N_1}\|u\|_{H_h^{-N_1}}.
\end{equation}
Note that by \eqref{eq:A1} we may choose $s_0 = N - \delta_0$ for some $\delta_0 > 0$. Now, since $u_h\in \mc{H}_h^{NG}$ and $P_h(\la_h)u_h\in \mc{H}_h^{NG}$, we can choose $B_1$ so that ${\rm WF}_h(B_1)\subset V_s$ and, since $B_1{\rm Op}_h(e^{NG})^{-1}\in \Psi_h^{-N}(\mc{M})$ by the fact that $m=1$ in $V_s$, we see that $B_1P_h(\la_h)u_h\in H_h^N$ and $\|B_1P_h(\la_h)u_h\|_{H_h^N}=o(h^{\beta})$. We thus choose $s=N$ with $N>s_0$ and get that there is $A_0\in \Psi_h^0(\mc{M})$ elliptic in a neighborhood $U$ of $L$ such that 
\begin{equation}\label{A''uh}
\|A_0u_h\|_{H_h^N}=o(h^{\beta-1}).
\end{equation}
This shows that $\mu(\chi|\sigma(A_0)|^2)=0$ for all $\chi\in C_c^\infty(T^*\mc{M})$ 
and thus $\mu$ vanishes in $U$. By the Anosov property of $X$ ($L$ is an attractor for the backward Hamiltonian flow $\Phi_{-t}=e^{-tH_p}$) for all 
$(x,\xi)\in T^*\mc{M}\setminus E_u^*\oplus E_0^*$, there is $T>0$ such that $\Phi_{-T}(x,\xi)\in U$. We can then use the invariance \eqref{dampingeq} to deduce that $\mu=0$ outside $E_u^*\oplus E_0^*$ so that \eqref{supp2} holds. 

To prove that $\mu\not=0$, we also need to have $H_h^{-N}$ estimates on 
$Qu_h$ for some appropriate $Q\in \Psi_h^{0}(\mc{M})$ microsupported outside $E_u^*$. 

For an operator $Y\in \Psi_h^m(\mc{M})$, denote by $Y^{(N)} := \Op_h(e^{NG}) Y \Op_h(e^{NG})^{-1}$ the conjugated operator. By \cite[Appendix]{FaRoSj} we then have $Y^{(N)} \in \Psi_h^m(\mc{M})$ and $\sigma_h(Y^{(N)}) = \sigma_h(Y)$. First, the elliptic 
estimate of \cite[Theorem E.33]{DyZw} gives that for each $A_1\in \Psi^0_h(\mc{M})$ with ${\rm WF}_h(A_1)\cap \{p=-1\}=\emptyset$  and each $N_1>0$, there is $C>0$ so that for all $h>0$
\begin{equation}\label{Q0u0}  
\|A_1^{(N)} \til{u}_h\|_{L^2}\leq C\|\big(P_h(\la_h)\big)^{(N)}\til{u}_h\|_{L^2} + Ch^{N_1}\|u_h\|_{H_h^{-N}}.%=o(h^{\beta}). 
\end{equation}
Here we used the above facts for $Y = A_1, P_h(\lambda_h)$. Therefore, for each cone $\mc{C}_{0}\subset \bbar{T}^*\mc{M}$ satisfying $\mc{C}_{0}\cap (E_u^*\oplus E_s^*)=\emptyset$, there is $R>0$ large and $A_1\in \Psi_h^0(\mc{M})$ satisfying $\sigma(A_1)= 1$ in $\mc{C}_{0}\cap\{|\xi|>R\}$ such that, for some $N_1 > \beta$
\begin{equation}\label{Q0u}  
\|A_1 u_h\|_{\mc{H}_h^{NG}}\leq C\|P_h(\la_h) u_h\|_{\mc{H}_h^{NG}} + Ch^{N_1}\|u_h\|_{H_h^{-N}}=o(h^{\beta}). 
\end{equation}

Next, we remark from the Anosov property of $X$ that for each cone $\mc{C}_s \subset \bbar{T}^*\mc{M}\setminus (E_u^*\oplus E_0^*)$ there is $T>0$ so that 
$\Phi_{-T}(\mc{C}_s)\subset U$ where $U$ is the neighborhood of $L$ used before, we can then use the propagation of singularity estimate of \cite[Theorem E.47]{DyZw} to deduce the following. For each cone $\mc{C}_s$ as above, there is $A_2\in \Psi_h^0(\mc{M})$ with $\sigma(A_2)=1$ on $\mc{C}_s$, $B\in\Psi_h^0(\mc{M})$ with ${\rm WF}_h(B)\subset U$, so that for all $N_1>0$, there is  a constant $C>0$ such that
\[ \|A_2u_h\|_{\mc{H}_h^{NG}}\leq Ch^{-1}\|P_h(\la_h)u_h\|_{\mc{H}_h^{NG}}+C\|Bu_h\|_{\mc{H}_h^{NG}}+Ch^{N_1}\|u_h\|_{H_h^{-N}}.\]
Note that, strictly speaking, we applied the propagation of singularities estimate to $(A_2)^{(N)}$, $\big(P_h(\lambda_h)\big)^{(N)}$, $B^{(N)}$ and $\til{u}_h$, similarly to \eqref{Q0u0}. Using \eqref{A''uh} and elliptic estimates, $\|Bu_h\|_{\mc{H}_h^{NG}}\leq C\|A_0u_h\|_{H_h^{N}} + \mc{O}(h^{\infty}) = o(h^{\beta-1})$, and combining with \eqref{eq:A1}, we deduce that 
\begin{equation}\label{Q1u} 
\|A_2u_h\|_{\mc{H}_h^{NG}}=o(h^{\beta-1}).
\end{equation}

Next, by \cite[Theorem E.54]{DyZw}, there is $s_1$ such that for all $s<s_1$, $N_1 > 0$, there is $A_3\in \Psi^0_h(\mc{M})$ elliptic near $L':=E_u^*\cap \pl\bbar{T}^*\mc{M}$, $B'\in \Psi^0_h(\mc{M})$ with ${\rm WF}_h(B')\cap L'=\emptyset$, $B_1\in \Psi_h^0(\mc{M})$ elliptic on ${\rm WF}_h(A_3)$ and $C>0$ such that for all $v\in H^{s}_h(\mc{M})$ such that $B_1P_h(\la_h)v\in H_h^s$, all $h>0$ 
\begin{equation}\label{radialsink} 
\|A_3v\|_{H_h^s}\leq C\|B'v\|_{H^s_h}+Ch^{-1}\|B_1P_h(\la_h)v\|_{H_h^s}+Ch^{N_1}\|v\|_{H_h^{-N}}.
\end{equation}
Note that, by possibly applying another time the propagation of singularities estimate, 
one can choose $B'$ to be microsupported also close to $L'$; we can assume that $A_3,B'$ are microsupported where $C_1\cjg \xi\cjd^{-N}\leq e^{NG}\leq C_2\cjg \xi\cjd^{-N}$. Note that by \eqref{eq:A1}, we may choose $s_1 = -N + \delta_1$ for some $\delta_1 > 0$. Let $\til{A}_3\in \Psi_h^0(\mc{M})$ elliptic so that ${\rm WF}_h(\til{A}_3-A_3)\cap L'=\emptyset$, i.e. $\widetilde{A}_3$ is microlocally equal to $A_3$ near $L'$. We can now write for some $N_1 > \beta$ and some $C,C'>0$, using \eqref{radialsink} with $v = u_h$ and $s = -N$ in the third line
\begin{equation}\label{bornesupuh}
\begin{split} 
1=\|u_h\|_{\mc{H}_h^{NG}}\leq & C\|\widetilde{A}_3u_h\|_{\mc{H}_h^{NG}} +\mc{O}(h^{N_1})\\
\leq & C\|A_3u_h\|_{H_h^{-N}}+C\|(\widetilde{A}_3-A_3)u_h\|_{\mc{H}_h^{NG}}+\mc{O}(h^{N_1})\\
\leq & C'\|B'u_h\|_{H^{-N}_h}+C\|(\widetilde{A}_3-A_3)u_h\|_{\mc{H}_h^{NG}}+o(h^{\beta-1}).
\end{split}
\end{equation}

Note that $B'$ and $(\widetilde{A}_3-A_3)$ satisfy the same property that their wavefront set 
does not intersect $L'$ in $\bbar{T}^*\mc{M}$. But if $B''\in \Psi_h^0(\mc{M})$ is such that ${\rm WF}_h(B'')\cap L'=\emptyset$, then one can choose $\mc{C}_s$ and $\mc{C}_0$ above so that 
${\rm WF}_h(B'')\subset \mc{C}_s\cup \mc{C}_0\cup \{|\xi|\leq 2R\}$ for some $R>1$ large enough, and thus there exists $Q\in \Psi_h^{\rm comp}(\mc{M})$ such that, for any $N_1 > 0$
\[
	 \|B'' u_h\|_{\mc{H}^{NG}_h}\leq C(\|B'' A_1 u_h\|_{\mc{H}^{NG}_h}+\|B''A_2 u_h\|_{\mc{H}^{NG}_h}+
\|B''Qu_h\|_{H^{NG}_h})+C h^{N_1} \|u_h\|_{H^{-N}_h}.
\]

Applying this to $B''=B'$ and $B''=\widetilde{A}_3-A_3$ and using \eqref{Q1u}, \eqref{Q0u} and \eqref{bornesupuh}, we deduce that there is $Q\in \Psi_h^{\rm comp}(\mc{M})$ elliptic on $\{|\xi|\leq R\}$ and $C>0$ such that 
\[ 1\leq C\|Qu_h\|_{\mc{H}^{NG}_h}+\mc{O}(h^{N_1})+o(h^{\beta-1}).\]
Taking some $N_1 > \beta$ and since $\beta\geq 1$, the right hand side converges to $0$ as $h\to 0$ and we deduce that 
$\mu(|\sigma(Q)|^2)>0$, showing that $\mu(\{|\xi|\leq R\})>0$ for $R>0$ large enough. Using the flow invariance \eqref{dampingeq} and the fact that $e^{tH_{p}}$ is expanding linearly in $\xi$ in the $E_u^*$ direction where $\mu$ is supported, we deduce that $\mu(K_{R}) > 0$ for all $R>0$.
\end{proof}
We remark that the proof of this Lemma shows that $u=o(h^{\beta-1})$ microlocally on compact sets 
outside $\Gamma_+\cap \{p=-1\}$ (use \eqref{Q0u0} and \eqref{Q1u}).

\section{Unstable derivatives and exotic calculus}

If $V\in C^\infty(\mc{M})$ is a potential, the proof of \cite[Theorem 2]{FaGu} gives that
there is $\alpha_V\in C^{\frac{\nu_{\min}}{\nu_{\max}}-}(\mc{M})$ such that $(-X-r_-)\alpha_V=U_-(V)$ and
\begin{equation}\label{[X+V,U_-]}
[-X+V,U_-+\alpha_V]=r_-(U_-+\alpha_V), \quad r_-\in C^{2-}(\mc{M}), \quad X\alpha_V\in C^{\frac{\nu_{\min}}{\nu_{\max}}-}(\mc{M}).
\end{equation}
Recall that $u$ is called a \emph{generalized resonant state} at $s$ if $(X + s)^ku = 0$ for some $k \geq 0$ and $u  \in \mc{H}^{N}$ for some $N$ large enough. We recall the result of \cite{FaGu}:
\begin{prop}[Faure-Guillarmou \cite{FaGu}]
If $X$ is a contact 3-dimensional Anosov flow, the Ruelle generalized resonant states $u$ with resonance in ${\rm Re}(s)>-\nu_{\min}$ satisfy $U_-u=0$. 
\end{prop}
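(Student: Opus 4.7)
The plan is to use the commutation $[X, U_-] = r_- U_-$ to reduce the statement to a scalar transport equation for $v := U_- u$, and then exploit the $\varphi_t$-invariance of the contact volume $\alpha \wedge d\alpha$ to force $v = 0$.

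For the generalized resonant state $u$ with $(X+s)^k u = 0$, set $u_j := (X+s)^j u$ and $v_j := U_- u_j$, so that $u_k = 0$ and $v_0 = v$. Using $[X+s, U_-] = r_- U_-$ from \eqref{eq:commute0} one checks directly that
\[
  (X + s - r_-)\, v_j \;=\; U_-\, (X+s) u_j \;=\; v_{j+1},
\]
and in particular $(X + s - r_-)\, v_{k-1} = 0$. Writing $w := v_{k-1}$ and integrating the first-order equation $X w = (r_- - s) w$ along orbits yields
\[
  \varphi_t^* w \;=\; e^{R_t - s t}\, w, \qquad R_t(x) := \int_0^t r_-(\varphi_\tau(x))\, d\tau.
\]

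Next, one shows $w \in L^2(\mc{M}, \alpha \wedge d\alpha)$. Since $u$ is a Ruelle resonant state in $\mc{H}^N$ with $\Re s > -\nu_{\min}$, radial-sink estimates at $L' = E_u^* \cap \pl \bbar{T}^*\mc{M}$ show that $\WF(u) \subset E_u^*$ and upgrade $u$ in the directions transverse to $E_u^*$. The vector field $U_-$ is $C^{2-}$ and tangent to $E_u$, so applying $U_-$ preserves the inclusion $\WF(\cdot) \subset E_u^*$, hence each $v_j$ and in particular $w$ lies in $\mc{H}^{N'}$ for some $N' \leq N$. Applying the radial-sink estimate now to the transport equation $(X + s - r_-)\, w = 0$ upgrades $w$ to $H^\sigma$ for some $\sigma > 0$: the threshold is nontrivial precisely because $\Re s > -\nu_{\min}$, so in particular $w \in L^2$.

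Since $X$ is the Reeb vector field of $\alpha$, $\varphi_t$ preserves $\alpha \wedge d\alpha$ and $\varphi_t^*$ is an $L^2$-isometry. Combining with the transport formula,
\[
  \|w\|_{L^2}^2 \;=\; \|\varphi_t^* w\|_{L^2}^2 \;=\; \int_{\mc{M}} e^{2 R_t(x) - 2 \Re s\, t}\, |w(x)|^2\, \alpha \wedge d\alpha.
\]
By the definition of $\nu_{\min}$ via Fekete's lemma, for every $\eps > 0$ and all sufficiently large $t$ one has $R_t(x) \geq (\nu_{\min} - \eps)\, t$ uniformly in $x \in \mc{M}$. Hence $\|w\|_{L^2}^2 \geq e^{2(\nu_{\min} - \eps - \Re s) t}\, \|w\|_{L^2}^2$, and since $\Re s \leq 0 < \nu_{\min}$ (no Ruelle resonances in the right half-plane), choosing $\eps$ with $\nu_{\min} - \eps - \Re s > 0$ and letting $t \to \infty$ forces $w = 0$. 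One then descends through $v_{k-2}, \ldots, v_0 = v$: each satisfies $(X + s - r_-)\, v_j = v_{j+1} = 0$, so the same $L^2$ argument gives $v_j = 0$, and ultimately $U_- u = 0$.

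The delicate step is the regularity upgrade, namely showing that the distribution $w$, built from the $C^{2-}$ vector field $U_-$ acting on $u \in \mc{H}^N$, actually belongs to $L^2$. This relies on the microlocal fact that the inclusion $\WF(u) \subset E_u^*$ survives multiplication and differentiation by $C^{2-}$ quantities, together with a radial-sink propagation argument for the rough-coefficient transport operator $X + s - r_-$; the hypothesis $\Re s > -\nu_{\min}$ is used precisely to leave room below the relevant radial-sink threshold, and the contact condition intervenes through the existence of a smooth $\varphi_t$-invariant volume in the final step.
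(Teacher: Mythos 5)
The paper does not give its own proof of this proposition; it is quoted from \cite{FaGu} verbatim. So the comparison is between your sketch and the argument in that reference.

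Your overall architecture is the right one and matches \cite{FaGu} in spirit: differentiate the resonance equation by $U_-$, obtain a damped transport equation for $w=U_-u$, promote $w$ to $L^2(\alpha\wedge d\alpha)$, and use the $L^2$-isometry of $\varphi_t^*$ together with the strict positivity of the averaged damping to force $w=0$. However, there is a sign error which, as written, actually makes a key step collapse. From \eqref{eq:commute0} one has $[X,U_-]=-r_-U_-$, hence $U_-X = XU_- + r_-U_-$ and $U_-(X+s)u_j = (X+s+r_-)v_j$, \emph{not} $(X+s-r_-)v_j$. With the correct sign the equation for $w:=v_{k-1}$ is $(X+s+r_-)w=0$, equivalently $w$ is a (generalized) eigenfunction of $-X - r_-$ at $s$. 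Integrating the contact volume then gives $\|w\|^2_{L^2}=\int_{\mc{M}} e^{-2\Re s\, t-2R_t}|w|^2 \leq e^{-2(\Re s+\nu_{\min}-\eps)t}\|w\|^2$, which goes to $0$ precisely when $\Re s > -\nu_{\min}$ -- this is where the hypothesis enters the final blow-up argument, not where you placed it. With your sign the final inequality is reversed and happens to still close (using only $\Re s\leq 0<\nu_{\min}$), which should have been a warning: the hypothesis $\Re s>-\nu_{\min}$ was dropping out of the argument.

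The sign error is also fatal to the regularity step as you describe it. The transport operator you write, $(X+s-r_-)$, corresponds to a positive potential $V=r_-$ with $V_{\max}=\nu_{\max}$, and the Fredholm/radial-sink window for $L^2$ control at $L'$ would require $\Re s>\nu_{\max}$, which is never satisfied. With the correct sign the potential is $-r_-$ with $V_{\max}=-\nu_{\min}$, and the corresponding condition is exactly $\Re s>-\nu_{\min}$, which is what makes the upgrade conceivable. But even with the sign fixed, the regularity upgrade is a genuine gap in your sketch: you assert that applying the $C^{2-}$ vector field $U_-$ preserves $\WF(\cdot)\subset E_u^*$ and leaves you in a space where radial-sink estimates can be invoked. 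That is precisely the delicate point: $U_-$ is not smooth, so $U_-u$ is not obviously a well-defined distribution for $u$ in an anisotropic space with negative regularity in the unstable conormal direction, and the standard $\WF$ calculus does not apply directly. Overcoming this -- either by showing $u$ has enough regularity transverse to $E_u^*$ to make sense of the pairing, as in \cite{FaGu}, or by regularizing $U_-$ at an $h$-dependent scale and working in an exotic calculus, as the present paper does for quasimodes -- is the core of the result, not a footnote. As it stands your proposal treats the load-bearing step as a remark and gets a crucial sign wrong; the rest is a correct rendering of the known mechanism.
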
 
This extra property of resonant states will give us a new identity on the semiclassical measures associated to sequences of resonances: roughly speaking $\mu$ will be invariant by the Hamiltonian flow of $\sigma(U_-)$. More generally we can use this unstable derivative even for quasimodes. However, there is a technical drawback, which is that $U_-$ is not smooth. For microlocal methods, this complicates the argument, and we will have to regularize $U_-$ with an $h$-dependent scale. 
This leads us to use a slightly exotic class of pseudo-differential operators and symbols. 

\subsection{Regularization and exotic pseudo-differential calculus}

If $m\in \rr$, $\rho\in (0,1)$ and $k\in\rr^+$, we define the exotic pseudo-differential calculus $\Psi^m_{h,\rho,k}(\mc{M})$ to be the set of operators of the form ${\rm Op}_h(a)$ where the symbol $a$ in the class $S^m_{h,\rho,k}(T^*\mc{M})$, which is the space of smooth functions on $T^*\mc{M}$ such that in local coordinates (here $x_+:=\max(x,0)$ for $x\in \rr$)
\begin{equation}\label{eq:exoticsymbolclass}
\forall \alpha,\beta, \quad	|\pl_x^{\alpha}\pl_\xi^{\beta}a(x,\xi)|\leq C_{\alpha,\beta,\eps}\cjg \xi\cjd^{m-|\beta|} h^{-\rho (|\alpha| -k)_+}. 
	\end{equation}
We will also write $\Psi^m_{h,\rho}
(\mc{M})$ when $k=0$.
The calculus defined by this property has all the good properties of a semiclassical pseudo-differential calculus, and we refer to Appendix \ref{app:exoticcalc} for a summary of the needed properties that we shall use freely in this section.  

 We want to work with smooth coefficients and will thus regularize H\"older functions at scale $h^\rho$ as follows. We fix a partition of unity $(\psi_j)_j$ associated to a family of local charts $(\mc{O}_j)_j$ (i.e. $\supp(\psi_j)\subset \mc{O}_j$), and denote by $(x_1,x_2,x_3): \mc{O}_j\to \rr^3$ the local coordinates in each chart. Without loss of generality, we shall choose the coordinate systems in the charts so that $X=\pl_{x_1}$. 
Then if $u	\in C^{k}(\mc{M})$ for $k\geq 0$, define $u^h:=\sum_j\psi_ju_j^h$ where $u_j=u|_{\mc{O}_j}$ and the $h^\rho$ regularization in the chart $\mc{O}_j$, identified to an open set of $\rr^3$, is given by
\begin{equation}\label{h-reg} 
\forall a\in L_{\rm comp}^2(\rr^3),\quad  a^h(x):= h^{-3\rho}\int_{\rr^3}\chi((x-x')/h^{\rho})a(x')dx'
\end{equation}
where $\chi\in C_c^\infty(\rr^3;\rr^+)$ has integral $1$ and is chosen to be a radial function, i.e. a function of $|x|$. Note that $\psi_j u_j^h$ depends only on the value of $u$ on an $h^\rho$-neighborhood of ${\rm supp}(\psi_j)$.
Notice also in that $1^h=1$. Similarly for a semiclassical differential operator $P$ with $C^{k}(\mc{M})$ coefficients, that is an operator which in each chart $\mc{O}_j$ has the form 
\[\sum_{\alpha} a_{j, \alpha}(x)h^{|\alpha|}\pl_{x}^{\alpha}\]
with $a_{j, \alpha} \in C^{k}(\mc{O}_j)$, we define $P^h=\sum_{j} \psi_j P_j^h$ where $P_j=P|_{\mc{O}_j}$, the regularization in each chart $\mc{O}_j$, is given by
\[ P_j^h=\sum_{\alpha} a_{j, \alpha}^h(x)h^{|\alpha|}\pl_{x}^{\alpha}.\]  
Moreover, if $P$ is a differential operator (i.e. non-semiclassical), we define $P^h$ analogously as an $h$-dependent differential operator. For instance, for a vector field $Y$ we then have $(hY)^h = hY^h$. We notice that this process depends a priori on the charts and a partition of unity, but this will not cause us any problems.
The following holds true:
 
\begin{lemm}\label{l:approximation}
Let $k\in (0,2)$ and let $a\in C^{k}(\mc{M})$. For $m\in [0,k)$ integer and $\ell\in \nn$, one has  
\[\begin{gathered} 
\|a^h-a\|_{C^m}=\mc{O}(h^{\rho (k-m)}||a||_{C^{k}}), \quad \|a^h\|_{C^\ell}=\mc{O}(h^{-\rho (\ell-k)_+}),\\
\forall j,  \quad\|\psi_j a_j^h-\psi_j a^h\|_{C^m}=\mc{O}(h^{(k-m)\rho}).
 \end{gathered}\]
In particular one has $a^h\in S^{0}_{h,\rho,k}(\mc{M})$ and $\psi_ja_j^h=\psi_ja^h+\mc{O}_{S^0_{h,\rho,0}}(h^{\rho k})$. If $k<1$ but $Xa\in C^k(\mc{M})$, one also has
\[ Xa^h= Xa+\mc{O}_{C^0}(h^{k\rho}).\] 
If $P$ is a semiclassical differential operator of order $\ell$ on $\mc{M}$ with $C^{k}(\mc{M})$ coefficients, then $P^h\in \Psi^{\ell}_{h,\rho,k}(\mc{M})$ and 
\begin{equation}\label{P-P_h} 
P^h=P+\mc{O}_{C^0}(h^{\rho k}).
\end{equation} 
where the norm is the $C^0$ norm of the coefficients in the local bases $\pl_{x_i}$.
\end{lemm}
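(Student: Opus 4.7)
The strategy is to reduce every claim to two standard bounds on the convolution mollifier in a single chart, and then glue via the partition of unity. In a chart $\mc{O}_j$ identified with an open subset of $\rr^3$, the regularization is convolution with $\chi_\rho(y):=h^{-3\rho}\chi(y/h^\rho)$, an approximate identity of mass $1$ supported in a ball of radius $\mc{O}(h^\rho)$; radial symmetry of $\chi$ forces the linear moments $\int y\,\chi_\rho(y)\,dy$ to vanish and $\int \pl^\beta \chi = 0$ for all $|\beta|\geq 1$. These are the only analytic ingredients.

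For the first and second bounds I would work in a single chart. For the first bound with $|\alpha|=m < k$, use $\int \chi_\rho = 1$ to write
\begin{equation*}
\pl^\alpha a^h(x) - \pl^\alpha a(x) = \int \chi_\rho(x-x')\bigl(\pl^\alpha a(x') - \pl^\alpha a(x)\bigr)\,dx',
\end{equation*}
and bound the integrand by $\|a\|_{C^k}|x-x'|^{k-m}$ with $|x-x'|\leq \mc{O}(h^\rho)$ on the support (for $k-m\geq 1$ one first Taylor-expands and uses the vanishing linear moment of $\chi$ to kill the leading term). For the second bound with $\ell > k$, set $n := \lfloor k\rfloor$ and integrate by parts to shift $\ell-n$ derivatives off $a$ onto $\chi_\rho$, producing a factor $h^{-\rho(\ell-n)}$. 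The vanishing $\int \pl^\beta \chi = 0$ lets me subtract $\pl^n a(x)$ from $\pl^n a(x')$ under the integral, and the H\"older bound $|\pl^n a(x') - \pl^n a(x)| \leq \|a\|_{C^k}|x-x'|^{k-n}$ recovers an extra $h^{\rho(k-n)}$, giving the stated $\mc{O}(h^{-\rho(\ell-k)})$.

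For the patching statements, since $a^h = \sum_i \psi_i a_i^h$ and $\sum_i \psi_i = 1$,
\begin{equation*}
\psi_j a_j^h - \psi_j a^h = \psi_j \sum_i \psi_i\bigl(a_j^h - a_i^h\bigr),
\end{equation*}
and on $\supp(\psi_j\psi_i)$ each mollification is $\mc{O}(h^{\rho(k-m)})$-close to $a$ in $C^m$ by the first bound (the smooth change of coordinates between charts is absorbed in the constant). The symbol class statement $a^h \in S^0_{h,\rho,k}(\mc{M})$ is immediate from the second bound since $a^h$ has no $\xi$-dependence, so the $x$-derivative bounds match exactly the defining seminorms; combining the third bound for $|\alpha|\leq k$ with the second for $|\alpha|>k$ yields $\psi_j a_j^h - \psi_j a^h \in \mc{O}_{S^0_{h,\rho,0}}(h^{\rho k})$. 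For the commutation identity, the chart convention $X = \pl_{x_1}$ makes $X$ commute with convolution, so $X(a_j^h) = (Xa_j)^h$, and differentiating $a^h = \sum_j \psi_j a_j^h$ gives
\begin{equation*}
Xa^h = \sum_j (X\psi_j)\, a_j^h + \sum_j \psi_j\, (Xa)_j^h = \sum_j (X\psi_j)(a_j^h - a) + (Xa)^h,
\end{equation*}
where I used $\sum_j X\psi_j = 0$. The first sum is $\mc{O}_{C^0}(h^{\rho k})$ by the first bound applied to $a\in C^k$ with $m=0$, while the second equals $Xa + \mc{O}_{C^0}(h^{\rho k})$ by the first bound applied to $Xa\in C^k$. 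The statement for a semiclassical differential operator $P$ then reduces to its coefficients: each $a_{j,\alpha}^h \in S^0_{h,\rho,k}$, so $P^h \in \Psi^\ell_{h,\rho,k}(\mc{M})$, and $\|a_{j,\alpha}^h - a_{j,\alpha}\|_{C^0} = \mc{O}(h^{\rho k})$ delivers the claimed $C^0$ remainder.

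The only mildly non-routine point is the second bound for $\ell > k$, where one cannot avoid the moment-vanishing / Taylor-subtraction mechanism; everything else is standard mollifier bookkeeping built on the principle that an $h^\rho$-mollifier turns $C^k$-regularity into smallness of size $h^{\rho k}$ and roughness beyond $C^k$ into controlled blow-up $h^{-\rho(\cdot-k)_+}$.
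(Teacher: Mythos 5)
Your argument is correct and follows essentially the same route as the paper: mollification by a radial bump (so odd moments vanish), Taylor-plus-H\"older to get smallness of order $h^{\rho k}$ for low derivatives, integration by parts onto $\chi$ combined with $\int \pl^\beta\chi = 0$ for high derivatives, and the identity $\sum_j\psi_j = 1$ (hence $\sum_j X\psi_j = 0$) for the gluing and commutation claims. The only cosmetic differences are that you unify $k<1$ and $k\geq 1$ via $n=\lfloor k\rfloor$ where the paper treats $k\geq 1$ explicitly and calls $k<1$ similar, and you express the patching bound through the identity $\psi_j a_j^h - \psi_j a^h = \psi_j\sum_i\psi_i(a_j^h - a_i^h)$ rather than the paper's direct Leibniz expansion — neither changes the substance.
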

\begin{proof}
We work out the case $k\geq 1$, the case $k<1$ is similar and indeed simpler. We write in the 
chart $\mc{O}_j$ (on ${\rm supp}(\psi_j)$)
\[a_j^h(x)= \int_{\rr^3}\chi(y)a_j(x+h^\rho y)dy\]
and use that $a_j(x+h^\rho y)=a_j(x)+h^\rho da_j(x).y+\mc{O}_{C^0}(h^{k\rho}
\|a\|_{C^{k}})$. But since $\chi$ is radial, one has $\int \chi(y)da_j(x).y \,dy=0$, and this shows that 
\[ a_j^h(x)=a_j(x)+\mc{O}(h^{k\rho}||a||_{C^k}).\]
One has a similar estimate for the derivative  
\[ \pl_{x_i} a_j^h=\pl_{x_i} a_j+\mc{O}_{C^0}(h^{(k-1)\rho} \|a\|_{C^k}).\]
In general, for all $|\alpha|\geq 2$ we get 
$\pl_{x}^\alpha a_j^h=\mc{O}_{C^0}(h^{-\rho (|\alpha|-k)_+})$ since%indeed one has 
\[\begin{split}
\pl_x^\alpha a_j^h(x)=&   (-1)^{|\alpha|} h^{-|\alpha|\rho}\int_{\rr^3}\pl_{y}^\alpha \chi(y)a_j(x+h^\rho y)dy\\ 
=&  (-1)^{|\alpha| - 1} h^{(1-|\alpha|)\rho}\int_{\rr^3}\pl_{y}^{\alpha'} \chi(y)\pl_{x_i}a_j(x+h^\rho y)dy
\end{split}\]
for some $\alpha'$ satisfying $|\alpha'|=|\alpha|-1$, with $i$ so that $\alpha_i>0$, and using that $\pl_{x_i}a_j(x+h^\rho y)=\pl_{x_i}a_j(x)+\mc{O}((h^\rho |y|)^{k-1})$ and $\int \pl_y^{\alpha'}\chi(y)dy=0$. Thus, if $V\in C^\infty(\mc{M};T\mc{M})$, one has 
\[ \begin{split}
Va^h & =\sum_{j}\big(V(\psi_j)a_j^h+\psi_j V(a_j^h)\big) = \sum_{j}\big(V(\psi_j)(a_j^h - a_j) + \psi_j V(a_j^h - a_j) + V(\psi_j) a_j +  \psi_j V(a_j)\big)\\
&= \sum_{j}\big(V(\psi_j)a+\psi_j V(a_j)\big) + \mc{O}_{C^0}(h^{(k-1)\rho})=Va+\mc{O}_{C^0}(h^{(k-1)\rho}).
\end{split}\]
Next, we also get from this analysis that for $m\leq k$, in $\mc{O}_j$
\[\pl_{x_i}^{m}(\psi_j(a_j^h-a^h))=\psi_j\pl_{x_i}^m(a_j-a)+\pl_{x_i}^m(\psi_j)(a_j-a)+
\mc{O}_{C^0}(h^{(k-m)\rho})=\mc{O}_{C^0}(h^{(k-m)\rho}).\] 
Next, if $a\in C^k$ with $Xa\in C^k$, by using that $X=\pl_{x_1}$ in each chart $\mc{O}_j$ so that $X(a_j^h)=(Xa_j)^h$, we can write as above% in the charts $\mc{O}_j$,
\begin{align*}
	Xa^h=\sum_{j}\big(X(\psi_j)a_j^h+\psi_jX(a_j)^h\big) &= \sum_{j}\big(X(\psi_j)(a_j^h-a_j) + \psi_j(Xa_j)^h\big) = (Xa)^h+\mc{O}_{C^0}(h^{k\rho}),
\end{align*}
where we used that $\sum_{j}\psi_j=1$. The analysis above also implies that for a semi-classical differential operator $P$ the 
 full local symbol of $P^h$ in charts satisfies 
\[|\pl_x^\alpha \pl_\xi^\beta\sigma_{\rm full}(P^h)(x,\xi)|\leq C_{\alpha,\beta}\cjg \xi\cjd^{\ell-|\beta|}h^{-\rho (|\alpha|-k)_+}
\] 
and that \eqref{P-P_h} holds.
\end{proof}
Applying the preceding lemma to $hU_-$, we get  for any $\varepsilon > 0$
\begin{equation}\label{regU_-} 
hU_-^h\in\Psi^{1}_{h,\rho, 2-\varepsilon}(\mc{M})
, \quad hU_-^h=hU_-+\mc{O}_{C^0}(h^{1+\rho(2-\eps)}||U_-||_{C^{2-\eps}}).
\end{equation}
For technical purposes, we need the following claim on the size of $(ab)^h - a^hb^h$:

\begin{lemm}\label{l:product}
Let $a\in C^{k}(\mc{M})$ and $b\in C^{\ell}(\mc{M})$ with $k, \ell\in (0,2)$. 
Then, if $a_j=a|_{\mc{O}_j}$ and $b_j=b|_{\mc{O}_j}$, one has 
\[ a^hb^h= (ab)^h+\mc{O}_{S^0_{h,\rho,0}}(h^{\rho \cdot \min(k, \ell)}), \quad \psi_j a_j^hb_j^h=\psi_j (a_jb_j)^h+\mc{O}_{S^0_{h,\rho,0}}(h^{\rho \cdot \min(k, \ell)}).\]
\end{lemm}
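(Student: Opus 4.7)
The plan is to reduce to a single chart and work with the integral representation
\[F(x) := a^h(x)b^h(x) - (ab)^h(x) = \iint \chi_{h^\rho}(x-y)\chi_{h^\rho}(x-z)\,a(y)[b(z) - b(y)]\,dy\,dz,\]
which follows from $\int \chi_{h^\rho} = 1$. By symmetry in $(y,z)$, one also has the representation with $b(z)[a(y) - a(z)]$. So the first step is to prove the chart-local statement $\psi_j a_j^h b_j^h = \psi_j (a_j b_j)^h + \mc{O}_{S^0_{h,\rho,0}}(h^{\rho\min(k,\ell)})$.

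For the $C^0$ bound in the regime $\ell \leq 1$, one uses $|b(z) - b(y)| \leq \|b\|_{C^\ell}(2h^\rho)^\ell$ on the kernel support to get $|F(x)| \leq C h^{\rho\ell}$, and the symmetric representation gives $C h^{\rho k}$, yielding $|F(x)| \leq Ch^{\rho\min(k,\ell)}$. For $\ell \in [1,2)$ one Taylor-expands $b(z) = b(y) + \nabla b(y)\cdot(z-y) + R(z,y)$ with $|R|\leq C\|b\|_{C^\ell}|z-y|^\ell$; the remainder gives $O(h^{\rho\ell})$, while the linear term, after integration in $z$ (using radiality $\int \chi_{h^\rho}(x-z)(z-y)\,dz = x-y$) and then in $y$ (using $\int \chi(u)u\,du = 0$ combined with the Hölder modulus of continuity of $a\nabla b$), contributes $O(h^{\rho\min(k+1,\ell)})$. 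Both are $\leq Ch^{\rho\min(k,\ell)}$.

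The derivative estimates follow from the same analysis applied to
\[\partial^\alpha F(x) = h^{-\rho|\alpha|}\sum_{\beta + \gamma = \alpha} \binom{\alpha}{\beta} \iint (\partial^\beta\chi)_{h^\rho}(x-y)(\partial^\gamma\chi)_{h^\rho}(x-z)\, a(y)[b(z)-b(y)]\,dy\,dz,\]
using that $\partial^\beta\chi$ and $\partial^\gamma\chi$ are smooth, compactly supported, and satisfy the vanishing moments $\int (\partial^\beta \chi)(u) u^\mu\,du = 0$ for $|\mu| < |\beta|$ (integration by parts), which is exactly what is needed to reproduce the Taylor-truncation step. This yields $|\partial^\alpha F(x)| \leq C_\alpha h^{\rho(\min(k,\ell) - |\alpha|)}$. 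Since $F$ is $\xi$-independent, the $\xi$-derivatives are trivial, establishing $F \in \mc{O}_{S^0_{h,\rho,0}}(h^{\rho\min(k,\ell)})$.

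Finally, for the global $a^h b^h = (ab)^h + \mc{O}_{S^0_{h,\rho,0}}(h^{\rho\min(k,\ell)})$, insert $\sum_{j'}\psi_{j'} = 1$ to write
\[a^h b^h - (ab)^h = \sum_{j,j'} \psi_j \psi_{j'}\bigl[a_j^h b_{j'}^h - (a_j b_j)^h\bigr];\]
diagonal terms $j = j'$ are the chart-local statement, while off-diagonal terms $j \neq j'$ are handled by substituting $\psi_j a_j^h = \psi_j a^h + \mc{O}_{S^0_{h,\rho,0}}(h^{\rho k})$ (and the $b$-analogue) from Lemma \ref{l:approximation} to reduce to negligible errors. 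The main obstacle is the derivative estimate when $\min(k,\ell) > 1$: the naive Leibniz-plus-Lemma~\ref{l:approximation} bound on $\partial^\alpha(a^h b^h)$ alone carries no $h^{\rho\min(k,\ell)}$ smallness, and this smallness is retrieved only through the cancellation encoded in the integral representation combined with the vanishing moments of $\chi$ and its derivatives.
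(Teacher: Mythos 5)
Your proposal takes a genuinely different route from the paper. The paper's proof stays inside the integral defining $(a_jb_j)^h$: it replaces $a_j,b_j$ by $a_j^h,b_j^h$ (acceptable error by Lemma~\ref{l:approximation}), and then Taylor-expands the \emph{smooth} product $a_j^hb_j^h$ at $x$ to first order; the linear term dies by radiality of $\chi$, and the remainder lies automatically in $\mc{O}_{S^0_{h,\rho,0}}(h^{\rho\min(k,\ell)})$ because all derivative bounds on $a_j^h,b_j^h$ are already available. You instead keep the rough functions $a,b$ and work with the double-integral representation of $F=a^hb^h-(ab)^h$, Taylor-expanding $b$ directly; this is transparent for the $C^0$ bound but pushes all the subtlety into the derivative estimates. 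Your derivative identity and the vanishing moments $\int(\partial^\beta\chi)u^\mu\,du=0$ for $|\mu|<|\beta|$ are correct, and they do handle every $T_{\beta,\gamma}$ with $|\beta|\geq 2$ or $|\gamma|\geq 2$, as well as $\beta,\gamma\neq 0$. However, when $\min(k,\ell)>1$ and $|\alpha|=1$, the two terms $T_{\alpha,0}$ and $T_{0,\alpha}$ each carry an $h^\rho a(x)\nabla b(x)$-sized contribution (since $\int\partial_i\chi(u)u_j\,du=-\delta_{ij}\neq 0$), and the needed smallness comes only from the \emph{cancellation} between them — the linear part of $g(u,v)=a(x-h^\rho u)[b(x-h^\rho v)-b(x-h^\rho u)]$ is $h^\rho a(x)\nabla b(x)\cdot(u-v)$, which is annihilated by the operator $(\partial_u+\partial_v)^\alpha$ appearing when one differentiates the kernel — and not from the vanishing moments alone. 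You flag this ("cancellation encoded in the integral representation combined with the vanishing moments") but don't carry it out; an alternative patch is to interpolate the $|\alpha|=1$ bound between the $|\alpha|=0$ and $|\alpha|=2$ cases. With that point made explicit your argument is complete, and it trades the paper's cleaner single Taylor expansion of a regularized product for a more elementary (and perhaps more illuminating) direct calculation with the mollifier kernel, at the cost of having to track these cancellations by hand.
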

\begin{proof}
First we work in the chart $\mc{O}_j$. Using $b_j(x)=b_j^h(x)+\mc{O}(h^{\ell\rho})$ 
and $a_j(x)=a_j^h(x)+\mc{O}(h^{k\rho})$, the bound $|\pl_x^\alpha b_j^h|=\mc{O}(h^{-\rho (|\alpha|-\ell)_+})$ in ${\rm supp}(\psi_j)\subset \mc{O}_j$, and the Taylor expansion 
\[(a_j^hb_j^h)(x + h^\rho y) = (a_j^hb_j^h)(x) + d(a_j^h b_j^h)(x) \cdot h^\rho y + R(x, y),\]
where $R(\cdot, y) = \mc{O}_{S^0_{h, \rho, 0}}(h^{\rho \cdot \min (k, \ell)})$ uniformly in $y$, we obtain
\[\begin{split} 
(a_jb_j)^h(x)= &\int_{\rr^3} \chi(y)a_j(x+h^\rho y)b_j(x+h^\rho y)dy\\
 =&\int_{\rr^3} \chi(y)a_j^h(x+h^\rho y)b_j^h(x+h^\rho y)dy+h^{-3\rho}\int_{\mathbb{R}^3} \chi\big(\frac{x-x'}{h^\rho}\big)a_j(x')(b_j-b_j^h)(x')dx'\\
 & + h^{-3\rho}\int_{\mathbb{R}^3} \chi\big(\frac{x-x'}{h^\rho}\big)(a_j-a_j^h)(x')b_j^h(x')dx'\\
=&a_j^h(x)b_j^h(x)+h^\rho d(a_j^hb_j^h)(x) \cdot \underbrace{\int_{\rr^3} y\chi(y)\,dy}_{=0}+\mc{O}_{S^{0}_{h,\rho,0}}(h^{\rho \cdot \min(k,\ell)}).
\end{split}\]
This gives the local result  by using that $\chi$ is radial (so the $h^\rho$ factor vanishes). Now for the global result, by the first part of the lemma
\[ (ab)^h=\sum_j\psi_j a_j^hb_j^h+\mc{O}_{S^{0}_{h,\rho,0}}(h^{\rho \cdot \min(k, \ell)})=
a^hb^h+\sum_j\psi_j a_j^h(b_j^h-b^h)+\mc{O}_{S^{0}_{h,\rho,0}}(h^{\rho \cdot \min(k, \ell)})\]
which yields the result since $\psi_j (b_j^h-b^h)=\mc{O}_{S^0_{h,\rho,0}}(h^{\rho \ell})$ by Lemma \ref{l:approximation}.
\end{proof}
We next compute the regularized commutator of $h(X - V)$ and $h(U_- + \alpha_V)^h$ where $\alpha_V$ is the function appearing in \eqref{[X+V,U_-]}:
\begin{lemm}\label{[X,U_-^h]}
Assume $\beta_0 \in (0, \frac{\nu_{\min}}{\nu_{\max}})$ and $\rho > \frac{1}{2 - \beta_0}$. If $V\in C^\infty(\mc{M})$ is $h$-independent, the following commutation relation holds 
\[[h(X-V),h(U_-+\alpha_V)^h]=-r_-^hh^2(U_-+\alpha_V)^h +h^{2+\beta_0\rho}R_h,\]
where the upper index $h$ denotes the regularization at scale $h^\rho$ and $R_h\in \Psi_{h,\rho,0}^1(\mc{M})$.
\end{lemm}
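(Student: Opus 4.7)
The strategy is to reduce to the smooth commutation identity \eqref{[X+V,U_-]}, namely $[X-V, U_-+\alpha_V] = -r_-(U_-+\alpha_V)$, and to carefully track the error produced by the $h^\rho$-regularization. Setting $W := U_-+\alpha_V$, the semiclassical scaling gives $[h(X-V), hW^h] = h^2[X-V, W^h]$, so it suffices to prove
\[
[X-V, W^h] \;=\; -r_-^h W^h \;+\; h^{\beta_0\rho}R_h,\qquad R_h \in \Psi^1_{h,\rho,0}(\mc{M}).
\]

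I work in a local chart with $X = \partial_{x_1}$, so that local convolution regularization commutes with $X$, i.e.\ $X(a^h) = (Xa)^h$ in the chart. Writing $U_- = \sum_i u_i \partial_{x_i}$ locally and decomposing \eqref{[X+V,U_-]} into its first-order and zeroth-order parts extracts the pointwise identities $X u_i = -r_-u_i$ for each $i$ and $X\alpha_V = -r_-\alpha_V - U_-(V)$. Using that $V$ is smooth (so $[V,\alpha_V^h]=0$ and $[V,u_i^h\partial_{x_i}] = -u_i^h\,\partial_{x_i}V$), a direct computation in the chart gives
\[
[X-V, W^h] \;=\; \sum_i (Xu_i)^h \partial_{x_i} + (X\alpha_V)^h + U_-^h(V) \;=\; -\sum_i (r_-u_i)^h \partial_{x_i} - (r_-\alpha_V)^h - (U_-V)^h + U_-^h(V).
\]

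Next I apply Lemma~\ref{l:product} to replace each regularized product by the product of regularizations: $(r_-u_i)^h = r_-^h u_i^h + O_{S^0_{h,\rho,0}}(h^{\rho(2-\eps)})$ (both factors are in $C^{2-}$), $(r_-\alpha_V)^h = r_-^h\alpha_V^h + O_{S^0_{h,\rho,0}}(h^{\rho(\nu_{\min}/\nu_{\max}-\eps)})$ (this is the bottleneck, coming from the H\"older regularity of $\alpha_V$), and $(U_-V)^h - U_-^h(V) = O_{S^0_{h,\rho,0}}(h^{\rho(2-\eps)})$ (using smoothness of $V$). Summing, the leading pieces reassemble to $-r_-^h(U_-^h + \alpha_V^h) = -r_-^h W^h$, and choosing $\eps>0$ small enough that $\nu_{\min}/\nu_{\max}-\eps > \beta_0$, the residue takes the form $h^{\rho\beta_0}R_h$ with $R_h \in \Psi^1_{h,\rho,0}$. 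Multiplying the identity by $h^2$ yields the lemma.

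The main technical obstacle is bookkeeping in the exotic calculus. Globalizing the chart-local identity via the partition of unity $(\psi_j)$ produces the correction $\sum_j (X\psi_j) W_j^h$, which, thanks to $\sum_j X\psi_j = 0$, equals $\sum_j(X\psi_j)(W_j^h - W)$; by Lemma~\ref{l:approximation} this is of order $h^{\rho(\nu_{\min}/\nu_{\max}-\eps)}$ in the symbol class and is therefore absorbed into $R_h$. One must also check that each error above is genuinely in the symbol class $S^1_{h,\rho,0}$ and not merely $C^0$-small; this is precisely what is built into the $S^0_{h,\rho,0}$ conclusion of Lemma~\ref{l:product}. The constraint $\beta_0 < \nu_{\min}/\nu_{\max}$ is forced by the regularity of $\alpha_V$, while the hypothesis $\rho > 1/(2-\beta_0)$ is used in the ambient exotic calculus to guarantee that the remainder $h^{2+\rho\beta_0}R_h$ is of the correct order relative to the principal term for the downstream semiclassical arguments.
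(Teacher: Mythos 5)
Your argument follows the same route as the paper: work chart by chart with $X=\partial_{x_1}$, use that convolution commutes with $\partial_{x_1}$, feed in the pointwise identities coming from \eqref{[X+V,U_-]}, apply Lemmas \ref{l:approximation} and \ref{l:product} to exchange $(ab)^h$ for $a^hb^h$, and handle the $\sum_j (X\psi_j)(W_j^h-W)$ correction from the partition of unity. The overall skeleton and error sources are correctly identified, including the bottleneck $h^{\rho\beta_0}$ coming from $\alpha_V\in C^{\beta_0}$.

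There is, however, a real gap in the last step of your accounting: you assert that ``the residue takes the form $h^{\rho\beta_0}R_h$ with $R_h\in\Psi^1_{h,\rho,0}$'' after only checking $\nu_{\min}/\nu_{\max}-\eps>\beta_0$, and you then ascribe the hypothesis $\rho>1/(2-\beta_0)$ to ``downstream semiclassical arguments.'' In fact that hypothesis is used \emph{in this lemma}, precisely to absorb the \emph{first-order} part of the residue. Concretely, the term $-\sum_i[(r_-u_i)^h - r_-^h u_i^h]\,\partial_{x_i}$ has coefficients of size $\mc{O}_{S^0_{h,\rho,0}}(h^{\rho(2-\eps)})$, but $\partial_{x_i}=h^{-1}(h\partial_{x_i})$, so once you multiply by $h^2$ and place the result in the semiclassical class $\Psi^1_{h,\rho,0}(\mc{M})$ this contribution is of size $h^{1+\rho(2-\eps)}$, \emph{not} $h^{2+\rho(2-\eps)}$. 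To subsume it into $h^{2+\rho\beta_0}\Psi^1_{h,\rho,0}(\mc{M})$ you need $1+\rho(2-\eps)\geq 2+\rho\beta_0$, i.e.\ $\rho\geq 1/(2-\eps-\beta_0)$, which is where $\rho>1/(2-\beta_0)$ (together with choosing $\eps$ small) enters. Without this step the conclusion $R_h\in\Psi^1_{h,\rho,0}$ at order $h^{2+\beta_0\rho}$ is not justified. The paper's proof keeps the two error types separate as $\mc{O}_{\Psi^0_{h,\rho,0}}(h^{2+\rho\beta_0})$ and $\mc{O}_{\Psi^1_{h,\rho,0}}(h^{1+\rho(2-\eps)})$ and only merges them via the assumption on $\rho$ at the very end; you should do the same rather than deferring the role of that assumption to later.

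A smaller point: the term $(U_-V)^h-U_-^h(V)$ is not a direct application of Lemma~\ref{l:product} (there $a,b$ are both regularized); you also need $(\partial_{x_i}V)^h-\partial_{x_i}V=\mc{O}_{S^0_{h,\rho,0}}(h^{2\rho})$ from Lemma~\ref{l:approximation} and smoothness of $V$, then combine. Your stated bound $\mc{O}(h^{\rho(2-\eps)})$ is correct, but spell out this extra step.
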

\begin{proof} For $V\in C^\infty(\mc{M})$, let $U^V_-:=U_-+\alpha_V$, then  
\[ [h(X-V),h(U_-^V)^h]=
\sum_{j}\psi_j [h(X-V), h(U^V_{-,j})^h]+h^2X(\psi_j)(U^V_{-,j})^h\] 
where $U^V_{-,j}=U_-^V|_{\mc{O}_j}$. We start by computing $[h(X-V),h(U^V_{-,j})^h]$ in $\mc{O}_j$. 
Writing $X=\pl_{x_1}$ as before and $U_{-,j}=\sum_ia_i(x)\pl_{x_i}$ in $\mc{O}_{j}$ we have, using $\pl_{x_i}a^h=(\pl_{x_i}a)^h$, 
\[ [h(X-V),h(U_{-,j}^h+\alpha_{V,j}^h)]=h^2\Big(X(\alpha_{V,j}^h)+\sum_{i=1}^3\Big((Xa_i)^h\pl_{x_i}+a_i^h\pl_{x_i}(V) \Big)\Big).
\]
Recall by \eqref{[X+V,U_-]} that $X\alpha_V+U_-(V)=-r_-\alpha_V$ and that $\alpha_V,X\alpha_V\in C^{\beta_0}(\mc{M})$ for all $\beta_0\in (0,\nu_{\min}/\nu_{\max})$; we 
 can then use Lemma \ref{l:product} and Lemma \ref{l:approximation} to deduce that on ${\rm supp} (\psi_j)$, for any $\varepsilon > 0$
\[\begin{split}
[h(X-V),h(U_{-,j}^h+\alpha_{V,j}^h)]=& h^2 [X,U_-]_j^h+h^2(X\alpha_{V})_j^h+h^2(U_{-}(V))_j^h+\mc{O}_{\Psi^0_{h,\rho,0}}(h^{2+\rho(2-\eps)})\\
=& -h^2 r_{-,j}^h(U_{-,j}^h+\alpha_{V,j}^h)+\mc{O}_{\Psi^0_{h,\rho,0}}(h^{2+\rho \beta_0})+\mc{O}_{\Psi^1_{h,\rho,0}}(h^{1+\rho(2-\eps)}).
\end{split}\]
Now $(U^V_{-,j})^h=(U_{-,j})^h+\alpha_{V,j}^h$ with 
$h^2\psi_j(U_{-,j})^h\in h\Psi^1_{h,\rho,2-\eps}(\mc{M})$ and $h^2\psi_j\alpha_{V,j}^h\in h^2\Psi^0_{h,\rho,\beta_0}(\mc{M})$ satisfy, using again that $\sum_j \psi_j=1$ and Lemma \ref{l:approximation}, for any $\varepsilon > 0$
\begin{align*}
\sum_j hX(\psi_j)\alpha_{V,j}^h &= \sum_j hX(\psi_j)[\alpha_{V,j}^h-\alpha_V]=\mc{O}_{C^0}(h^{1+\beta_0\rho}),\\
\sum_j hX(\psi_j)(U_{-,j})^h &= \sum_j hX(\psi_j)[(U_{-,j})^h-U_-]
=\mc{O}_{C^\ell}(h^{1+\rho(2-\eps-\ell)}),
\end{align*}
for $\ell=0,1$, where $C^{\ell}$ denotes the norm on vector fields. In particular, $\sum_j hX(\psi_j)(U_{-,j})^h\in h^{\rho(2-\eps)}\Psi_{h,\rho,0}^1(\mc{M})$ and $\sum_j hX(\psi_j)\alpha_{V,j}^h\in h^{1+\beta_0\rho}\Psi^0_{h,\rho,0}(\mc{M})$. 
Thus the error term belongs to $h^{\min(2 + \rho \beta_0, 1 + \rho(2 - \varepsilon))} \Psi^1_{h, \rho, 0}(\mc{M})$ and using the assumption on $\rho$ completes the proof.
%This concludes the proof since $\beta_0<1$.
\end{proof}

\subsection{A propagation estimate and regularity of the semiclassical measure in the unstable direction}
In this section, we are going to use the regularized version of the commutation formula $[X,U_-]=-r_-U_-$ proved in Lemma \ref{[X,U_-^h]} to deduce that the semiclassical measure $\mu$ enjoys some extra regularity in the (lifted) unstable direction.  

Before proceeding, we define the semiclassical principal symbol of $-ihU_\pm$
\[ p_\pm (x,\xi): = \sigma(-ihU_\pm)(x, \xi) = \xi(U_\pm(x)).\]
Recall here $U_-$ spans $E_u$ and $U_+$ spans $E_s$, so we have 
\[ p_-^{-1}(0) = E_0^* \oplus E_u^* = \Gamma_+, \quad p_+^{-1}(0) = E_0^* \oplus E_s^* = \Gamma_-.\]
Since $p_-\in C^{2-}$, the Hamiltonian vector field $H_{p_-}$ has $C^{1-}$ coefficients, which would a priori not be sufficient to define its flow as it is not Lipschitz. However, since this is a Hamiltonian vector field, the flow equation in local coordinates reads 
\begin{equation}\label{hamiltonfloweq} 
\dot{x}(t)=U_-(x(t)), \quad \dot{\xi}(t)=-\pl_{x}U_-(x(t)).\xi(t).
\end{equation}
Since $U_-\in C^{2-}$, we see that the first equation with initial condition $x(0)=x_0$ has a unique solution given by the (horocycle) flow $\phi_t(x_0)=e^{tU_-}(x_0)$ of $U_-$, 
and $(t,x)\mapsto \phi_t(x)$ is $C^{2-}$ but it is $C^2$ in the $t$-variable. Since $x(t)$ is well-defined (in a unique way), the second 
equation for $\xi(t)$ with $\xi(0)=\xi_0$ can now obviously be solved for each $x$ since it is linear, the solution is unique, and it is given by $\xi(t)=(d\phi_t(x)^{-1})^T\xi$.
We will thus define $e^{tH_{p_-}}$ to be the symplectic lift of $\phi_t$
\[ e^{tH_{p_-}}(x,\xi):= (\phi_t(x),(d\phi_t(x)^{-1})^T\xi)\]
which is the unique solution of \eqref{hamiltonfloweq} with initial condition $(x,\xi)\in T^*\mc{M}$.

Let $\delta > 0$ be small and define% for $\kappa\in \rr$
\begin{equation}\label{defofU_delta}
\mathcal{U}_\delta := \{(x,\xi)\in T^*\mc{M} : |p_-(x,\xi)| < \delta,\, |p_+(x,\xi)| < \delta,\, |p(x,\xi)+1| < \delta\}.
\end{equation}
% (cf. \cite[Figure 3]{Dy})

We now state a technical propagation Lemma,  which is comparable to \cite[Lemma 2.7]{Dy}.

\begin{figure}
             \centering
\begin{tikzpicture}[scale = 0.8, everynode/.style={scale=0.5}]
\tikzset{cross/.style={cross out, draw=black, minimum size=2*(#1-\pgflinewidth), inner sep=0pt, outer sep=0pt},
%default radius will be 1pt. 
cross/.default={1pt}}

%      	\fill[gray!30] (7.8, -3) rectangle (8.5, 3); %fill most right gray
%      	\fill[gray!30] (5.5, -3) rectangle (7, 3); %fill most left gray

%the left graph
      	\draw[thick, ->] (-4.5, 0) -- (4.5, 0) node[right] {\small $p_+$};
		\draw[thick, ->] (0, -4.5) -- (0, 4.5) node[above] {\small $p_-$};

		%big rectangle U_2\delta
		\draw (-3,-3) rectangle (3, 3);
		\fill[gray, nearly transparent] (-3, -3) rectangle (3, 3);
		%\draw (3, -3) node[above left] {\small $\mathcal{U}_{2\delta}$};
		
		%small rectange U_\delta
		\draw (-1.5,-1.5) rectangle (1.5, 1.5);
		%\draw (1.5, -1.5) node[above left] {\small $\mathcal{U}_{\delta}$};
		\fill[pattern=north east lines, pattern color=blue] (-1.5, -1.5) rectangle (1.5, 1.5); 
		
		%red rectangle 1
		\draw (-3,-3) rectangle (3, -1.5);
		\fill[pattern=north west lines, pattern color=red] (-3, -3) rectangle (3, -1.5);
		
		%red rectangle 2
		\draw (-3,3) rectangle (3, 1.5);
		\fill[pattern=north west lines, pattern color=red] (-3, 3) rectangle (3, 1.5);
		
		%yellow mid rectangle U_{3\delta/2}
		\draw (-2.25, -2.25) rectangle (2.25, 2.25);
		\fill[pattern=horizontal lines, pattern color=yellow] (-2.25, -2.25) rectangle (2.25, 2.25);
		
		%the biggest rectangle
		\draw[thick, color=black!50!green] (-3.25,-3.25) rectangle (3.25, 3.25);
		
		%rectangle notation
		\draw (1.5, -1.5) node[above left] {\small $\mathcal{U}_{\delta}$};
		\draw (2.25, -2.25) node[above left] {\small $\mathcal{U}_{\frac{3\delta}{2}}$};		
		\draw (3, -3) node[above left] {\small $\mathcal{U}_{2\delta}$};
		
		%points in the squares
		\fill (1.5, 0) node[above left] {\tiny $\delta$} circle (1pt);
		\fill (2.25, 0) node[above left] {\tiny $\frac{3\delta}{2}$}  circle (1pt);
		\fill (3, 0) node[above left] {\tiny $2\delta$}  circle (1pt);

\end{tikzpicture}
             \caption{\small The wavefront set setup in Lemma \ref{lemm:averageestimate}. In shaded blue we have $A = 1$, in red is $B = 1$, in yellow is $B_2 = 1$, in gray we have $B_1 = 1$, while the dark green rectangle bounds the set where $B_2' = 1$, all taken modulo $\mathcal{O}(h^\infty)$.}
             \label{fig:wfsetup}
\end{figure}

\begin{lemm}\label{lemm:averageestimate}
	Let $k \geq 1$. Fix $\delta > 0$ small and assume $A, B, B_1 \in \Psi_h^{\comp}(\mc{M})$ satisfy (see Figure \ref{fig:wfsetup})
	\begin{itemize}
		\item[1.] ${\rm WF}_h(A) \subset \mathcal{U}_{3\delta/2}$ and $A = 1 + \mc{O}(h^\infty)$ on $\overline{\mathcal{U}_{\delta}}$.
		\item[2.] ${\rm WF}_h(B) \subset \mathcal{U}_{3\delta} \cap \{|p_-| > \delta/2\}$ and $B = 1 + \mc{O}(h^\infty)$ on $\overline{\mathcal{U}_{2\delta}} \cap \{|p_-| \geq \delta\}$.
		\item[3.] ${\rm WF}_h(B_1) \subset \mathcal{U}_{3\delta}$ and $B_1 = 1 + \mc{O}(h^\infty)$ on $\overline{\mathcal{U}_{2\delta}}$.
	\end{itemize}
Let $W \in \Psi_{h, \rho, k}^{0}(\mc{M})$ with real principal symbol $\sigma(W)$ so that $\sigma(W)\to \sigma(W)_0\in C^0(T^*\mc{M})$ uniformly as $h\to 0$, and assume that for $T\gg 1$ and some $c>0$% and $h>0$ small enough % such that for all $t \geq T$ and 
	\begin{equation}\label{eq:averagecondition}
		\frac{1}{T} \int_0^T  (\sigma(W)_0 - V - \gamma) \circ {e^{-tH_p}} dt \geq c    \quad \mathrm{on}\,\,\,\, \mathcal{U}_\delta \cap \Gamma_+.
	\end{equation}
	Then for all $u_h \in \mc{D}'(\M)$ and all $N_1 \gg 1$, there is a $C > 0$ such that
	\begin{equation}\label{eq:ineq0}
		\lVert{Au}\rVert_{L^2} \leq Ch^{-1} \lVert{B_1(P_h(\la_h) - ihW)u}\rVert_{L^2} + C\lVert{Bu}\rVert_{L^2} + Ch^{N_1} \lVert{u}\rVert_{H_h^{-N_1}}.
	\end{equation}
\end{lemm}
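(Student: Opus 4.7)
The estimate is a damped propagation of singularities in the exotic calculus $\Psi^*_{h,\rho,k}$, modeled on \cite[Lemma 2.7]{Dy}. Setting $Q := P_h(\lambda_h) - ihW$, the operator $Q$ has real principal symbol $p+1$, and $h^{-1}\Im(Q)$ has principal symbol $-(\sigma(W) - V - \gamma) + o(1)$, so the hypothesis \eqref{eq:averagecondition} amounts to positivity of $-h^{-1}\Im(Q)$ on average along the backward Hamiltonian flow over time $T$ on $\mc{U}_\delta \cap \Gamma_+$. Geometrically, backward trajectories entering $\mc{U}_\delta$ either stay trapped on $\Gamma_+$ (where damping alone controls them) or exit through $\{|p_-| \geq \delta\}$ (where $B$ provides control), since $|p_-|$ expands and $|p_+|$ contracts exponentially under backward flow, and $p+1$ is conserved. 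This sets up a standard positive commutator argument, with commutant built from an escape function encoding the averaged damping.

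Set $q := \sigma(W) - V - \gamma \in S^0_{h,\rho,k}$ and define
\[
F(x,\xi) := \int_0^T \Bigl(1 - \frac{s}{T}\Bigr)\, q\bigl(e^{-sH_p}(x,\xi)\bigr)\, ds,
\]
so that $H_pF = q - \tfrac{1}{T}\int_0^T q \circ e^{-sH_p}\, ds$. Hypothesis \eqref{eq:averagecondition} combined with the uniform convergence $\sigma(W) \to \sigma(W)_0$ gives $q - H_pF \geq c/2$ on an $h$-independent neighborhood of $\mc{U}_\delta \cap \Gamma_+$; uniform continuity extends this bound to all of $\mc{U}_\delta$ upon taking $\delta$ small. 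Take $\chi \in C_c^\infty(T^*\mc{M}; [0,1])$ equal to $1$ on $\overline{\mc{U}_\delta}$ and supported in $\mc{U}_{3\delta/2}$, chosen so that the transition of $\chi$ in the $|p_-|$-direction lies inside $\{B = 1 + \mc{O}(h^\infty)\}$, while transitions in $|p_+|$ and $|p+1|$ are respectively handled by the favorable sign of $H_p|p_+| = r_+|p_+|$ and by microlocal ellipticity of $Q$ on $\{|p+1| > \delta\}$. Set $a := \chi\, e^{-F} \in S^0_{h,\rho,k}$ and $A_h := \Op_h(a) \in \Psi^0_{h,\rho,k}$, which is elliptic on $\overline{\mc{U}_\delta}$ and microsupported in $\mc{U}_{3\delta/2}$.

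The identity
\[
2i\,\Im\langle Qu, A_h^*A_hu\rangle = \langle [A_h^*A_h, Q]u, u\rangle + \langle (Q-Q^*)A_h^*A_hu, u\rangle,
\]
together with the exotic symbol calculus of Appendix \ref{app:exoticcalc}, implies that the principal symbol of $-\tfrac{i}{h}[A_h^*A_h, Q] - \tfrac{2}{h}\Im(Q)A_h^*A_h$ equals
\[
H_p(a^2) + 2qa^2 = (H_p\chi^2)\, e^{-2F} + 2(q - H_pF)\,\chi^2 e^{-2F}.
\]
On $\supp(a)$, the second term is $\geq c'a^2$ by construction, while the first is supported in $\supp(H_p\chi)$ and decomposes into a piece controlled by $\|Bu\|^2$ (from the $|p_-|$-transition) and a piece controlled by $h^{-2}\|B_1 Qu\|^2$ (from the $|p+1|$-transition, via a standard elliptic estimate). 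Applying the sharp G\aa{}rding inequality in the exotic class yields
\[
c''\|A_hu\|^2 \leq -\tfrac{2}{h}\Im\langle Qu, A_h^*A_hu\rangle + C\|Bu\|^2 + Ch^{-2}\|B_1Qu\|^2 + Ch^{N_1}\|u\|_{H_h^{-N_1}}^2.
\]

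Since $\WF_h(A_h) \subset \mc{U}_{3\delta/2} \subset \mc{U}_{2\delta}$ where $B_1 = 1 + \mc{O}(h^\infty)$ microlocally, we have $A_hQu = A_hB_1Qu + \mc{O}(h^{N_1})\|u\|_{H_h^{-N_1}}$, and Cauchy--Schwarz with Young's inequality gives $\tfrac{2}{h}|\Im\langle Qu, A_h^*A_hu\rangle| \leq \tfrac{c''}{2}\|A_hu\|^2 + Ch^{-2}\|B_1Qu\|^2 + \mc{O}(h^{N_1})\|u\|_{H_h^{-N_1}}^2$. Absorbing the first term into the left gives $\|A_hu\|^2 \lesssim h^{-2}\|B_1Qu\|^2 + \|Bu\|^2 + h^{N_1}\|u\|_{H_h^{-N_1}}^2$, and the ellipticity of $a$ on $\WF_h(A)$ transfers this to a bound on $\|Au\|_{L^2}$, proving \eqref{eq:ineq0}. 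The main obstacle is verifying that this positive commutator argument closes in the exotic calculus $\Psi^*_{h,\rho,k}$: composition and commutator remainders lose powers of $h^{\min(1-\rho,\rho k)}$ instead of the standard $h$, and the assumption $k \geq 1$ (together with $\rho < 1$) is what makes these errors small enough to be absorbed into the leading positive term. All the required exotic-calculus facts---principal-symbol formula, composition, and sharp G\aa{}rding---are developed in Appendix \ref{app:exoticcalc}.
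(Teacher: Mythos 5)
You take a genuinely different route from the paper. The paper's proof of this lemma is a compactness argument: Step 1 proves a weaker estimate with remainder $\varepsilon_0\|u\|_{L^2}$ by contradiction, extracting a semiclassical defect measure $\nu$ from a putative sequence of counterexamples, showing $\nu=0$ off $\Gamma_+$ by propagation, then using the evolution equation for $\nu|_{\Gamma_+}$ together with \eqref{eq:averagecondition} to force exponential growth of $\nu(\mathcal{U}_{3\delta/2})$ — a contradiction; Step 2 upgrades the remainder to $\mathcal{O}(h^{N_1})\|u\|_{H_h^{-N_1}}$ by a further propagation bootstrap. Your approach is a direct, quantitative positive-commutator estimate with commutant $a = \chi e^{\mp F}$, $F$ a time-averaged escape function — closer in spirit to Dyatlov's original scheme, and it avoids the compactness step; the price is that it requires more of the exotic calculus than Appendix~\ref{app:exoticcalc} actually provides (one needs a sharp G\aa rding inequality with exotic commutant and stability of $S^0_{h,\rho,k}$ under pull-back by $e^{-sH_p}$, neither of which is stated there; item~6 only gives the weaker $L^2$ bound by the sup-norm of the symbol). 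The paper's route buys a lighter toolbox: it only needs Proposition~\ref{prop:semiclassicalmeasurepropagation} plus the propagation estimates already proved.

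That said, the argument as written does not close, for two reasons.

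\textbf{Sign error in the commutant.} With the paper's conventions ($\{a,b\} = \partial_\xi a\,\partial_x b - \partial_x a\,\partial_\xi b$, $\sigma(h^{-1}[A,B]) = -i\{\sigma A,\sigma B\}$, and $\sigma(h^{-1}\Im Q) = -(\sigma(W)-V-\gamma)+o(1) = -q+o(1)$), the identity
\[
2i\,\Im\langle Qu, Cu\rangle = \langle([C,Q] + (Q-Q^*)C)u, u\rangle, \qquad C := A_h^*A_h,
\]
gives, after dividing by $ih$,
\[
\tfrac{2}{h}\Im\langle Qu, Cu\rangle = \langle\bigl(-\tfrac{i}{h}[C,Q] + \tfrac{2}{h}\Im(Q)C\bigr)u, u\rangle,
\]
whose principal symbol is $H_p(a^2) - 2qa^2$, not $H_p(a^2) + 2qa^2$ (you wrote $-\tfrac{2}{h}\Im(Q)C$ where the identity produces $+\tfrac{2}{h}\Im(Q)C$). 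Factoring with $a = \chi e^{-F}$, this symbol equals $(H_p\chi^2)e^{-2F} - 2(q + H_pF)\chi^2 e^{-2F}$, so the quantity you need bounded from below is $q + H_pF$, not $q - H_pF$. Therefore the escape function must be $F := -\int_0^T(1-s/T)\,q\circ e^{-sH_p}\,ds$, which gives $q + H_pF = \frac{1}{T}\int_0^T q\circ e^{-sH_p}\,ds$ — the averaged quantity in \eqref{eq:averagecondition}. With your sign of $F$, the commutator argument produces $q+H_pF = 2q - \frac{1}{T}\int_0^T q\circ e^{-sH_p}$, which is not controlled by the hypothesis. (Your geometric observations about $H_p|p_\pm|$ remain correct and, with the fixed signs, the $\chi_+$-transition is favorable, the $\chi_-$-transition is unfavorable but supported where $B=1$.)

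\textbf{Domain of validity of the averaged positivity.} The hypothesis gives $\frac{1}{T}\int_0^T(\sigma(W)_0 - V -\gamma)\circ e^{-tH_p}\,dt \geq c$ only on $\mathcal{U}_\delta \cap \Gamma_+$, whereas $\supp\chi$ spills into $\mathcal{U}_{3\delta/2}$ including the shell $\{\delta < |p+1| < 3\delta/2\}$. Near $\Gamma_+$ you can extend by continuity and by the fact that $e^{-tH_p}$ contracts $|p_+|$, so the backward average is eventually taken inside $\mathcal{U}_\delta\cap\Gamma_+$; but on the $|p+1|$-shell there is no a priori sign control on $q+H_pF$, and since $H_p(\chi_0^2(p+1)) = 0$ ($p$ is conserved), no favorable transition term appears either. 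Saying this is handled by ``microlocal ellipticity of $Q$'' is true but needs to be built into the argument — e.g.\ add a term proportional to $(p+1)^2$ to the commutant, or split $A$ and run a separate elliptic estimate on that shell before the commutator step. The paper's contradiction argument sidesteps this completely: by the time the averaging is invoked, the defect measure is already known to live on $\Gamma_+ \cap \{p=-1\}$, so only \eqref{eq:averagecondition} itself is needed.

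With these two repairs — flip the sign of $F$ (and the $\Im(Q)$ term) and handle the $|p+1|$-shell explicitly — the positive-commutator approach should go through, but as written the central estimate would not close.
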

\begin{proof} 
We split the proof in two steps.

\emph{Step 1.}
	Take $B_2 \in \Psi_h^{\comp}(\M)$ such that ${\rm WF}_h(B_2) \subset \mathcal{U}_{2\delta}$ and $B_2 = 1 + \mc{O}(h^\infty)$ on $\mathcal{U}_{3\delta/2}$. We claim that, for every fixed $\varepsilon_0 > 0$, there is $C>0$ such that for  all $u\in L^2(\M)$% and all $h$ small enough
	\begin{equation}\label{eq:claim1}
		\lVert{Au}\rVert_{L^2} \leq Ch^{-1} \lVert{B_2(P_h(\la_h) - ihW )u}\rVert_{L^2} + C\lVert{Bu}\rVert_{L^2} +\varepsilon_0 \lVert{u}\rVert_{L^2}.
	\end{equation}
	We prove this by contradiction: assume there is a sequence $u_{h} \in L^2(\M)$ with $h \to 0$ and $\|u_h\|_{L^2} = 1$, such that
	\begin{equation}\label{eq:contradictionassumption1}
		\|B_2(P_h(\la_h)- ihW )u_{h}\|_{L^2} = o(h), \quad \|Bu_{h}\|_{L^2} = o(1), \quad \|Au_{h}\|_{L^2} \geq \varepsilon_0.
	\end{equation}
	By \cite[Theorem E. 42]{DyZw} there is a semiclassical (Radon) measure $\nu$ associated to $u_h$. Now use propagation of singularities estimate Proposition \ref{prop:calcprops} (item 8): for every $(x, \xi) \in \mathcal{U}_{3\delta/2} \setminus \Gamma_+$, there is a $t_0 \geq 0$ with $e^{-t_0H_p}(x, \xi) \in \{\sigma(B) = 1\}$ and $e^{-tH_p}(x, \xi) \in \mathcal{U}_{3\delta/2}$ for $t \in [0, t_0]$. Thus for all $Q \in \Psi^{\rm comp}_h(\M)$ with ${\rm WF}_h(Q)\subset \mathcal{U}_{3\delta/2}\setminus \Gamma_+$,
	\[\|Qu\|_{L^2} \leq C \|Bu\|_{L^2} + Ch^{-1} \|B_2(P_h(\la_h) - ihW)u\|_{L^2} + \mc{O}(h^\infty).\]
By \eqref{eq:contradictionassumption1} we obtain $\|Qu\|_{L^2} = o(1)$ and so $\nu = 0$ on $\mc{U}_{3\delta/2}\setminus \Gamma_+$.

Next, \eqref{eq:contradictionassumption1} implies $(P_h(\la_h) - ihW)u = o(h)$ microlocally in $\mathcal{U}_{3\delta/2}$, so by Proposition \ref{prop:semiclassicalmeasurepropagation} we have for all $a \in C_0^\infty(\mathcal{U}_{3\delta/2})$
\[\int_{T^*\M} H_p a \, d\nu = 2\int_{T^*\M} (\sigma(W)_0-V-\gamma) a \, d\nu.\]
Set $f := 2 (\sigma(W)_0-V-\gamma)$. Equivalently, since for $t \geq 0$ we have $e^{-tH_p}: \mathcal{U}_{3\delta/2} \cap \Gamma_+ \to \mathcal{U}_{3\delta/2} \cap \Gamma_+$ and since $\nu$ is zero on $\mc{U}_{3\delta/2} \setminus \Gamma_+$, we may write an evolution equation for $\nu' = \nu|_{\mathcal{U}_{3\delta/2} \cap \Gamma_+}$,
\begin{equation}
	(e^{-tH_p})^* d\nu' = e^{\int_0^t f \circ e^{-rH_p} dr} d\nu', \quad t \geq 0.
\end{equation}
%\begin{equation}
%	\forall t\geq 0,\quad \pl_t	((e^{-tH_p})^* d\nu') = (e^{-tH_p})^*(f d\nu'),
%\end{equation}
%We view this equation as an ODE for the measure and after multiplying by the integrating factor $e^{-\int_0^t f \circ e^{-rH_p}dr}$ we get for 
%and so $(e^{-tH_p})^* d\nu' = e^{\int_0^t f \circ e^{-rH_p} dr} d\nu'$ for $t\geq 0$.
Applying this relation to $0 \leq a \in C_0^\infty(\mathcal{U}_{3\delta/2})$ and as $\nu$ is a Radon measure, we get for $t \gg 1$
\[\int_{\mathcal{U}_{3\delta/2} \cap \Gamma_+} a \circ e^{tH_p} d\nu = \int_{\mathcal{U}_{3\delta/2} \cap \Gamma_+} e^{\int_0^t f \circ e^{-rH_p} dr} a \, d\nu \geq e^{2c t} \int_{\mathcal{U}_{3\delta/2} \cap \Gamma_+} a \, d\nu,\] 
by the condition \eqref{eq:averagecondition}. The left hand side of this equation is bounded from above by $\|a\|_{L^\infty} \nu(\mathcal{U}_{3\delta/2})$, while the right hand side is growing exponentially fast. Thus, $\nu \equiv 0$ on $\mc{U}_{3\delta/2}$, contradicting the last point of \eqref{eq:contradictionassumption1} and proving the claim.

\emph{Step 2.} For every $B_2' \in \Psi_h^{\comp}(\M)$ with $B_2' = 1 + \mc{O}(h^\infty)$ microlocally on ${\rm WF}_h(B_2)$, we may apply inequality \eqref{eq:claim1} to $B_2'u$ to get\begin{equation}\label{eq:ineq1}
	\|Au\|_{L^2} \leq Ch^{-1} \|B_2(P_h(\la_h) - ihW)u\|_{L^2} + C \|Bu\|_{L^2} + \varepsilon_0 \|B_2'u\|_{L^2} + Ch^{N_1} \|u\|_{H_h^{-N_1}}.
\end{equation}
Choose $B_2'$ such that for every $(x, \xi) \in {\rm WF}_h(B_2')$, there is a $t_0 \geq 0$ with $e^{-t_0H_p} (x, \xi) \in \{\sigma(A) \neq 0\} \cup \{\sigma(B) \neq 0\}$, and for all $t \in [0, t_0]$ we have $e^{-tH_p}(x, \xi) \in \{\sigma(B_1) \neq 0\}$. Thus by the propagation of singularities estimate, Proposition \ref{prop:calcprops} (item 8), we obtain
\begin{equation}\label{eq:ineq2}
	\|B_2'u\|_{L^2} \leq Ch^{-1} \|B_1(P_h(\la_h) - ihW)u\|_{L^2} + C\|Bu\|_{L^2} + C \|Au\|_{L^2} + Ch^{N_1}\|u\|_{H_h^{-N_1}}.
\end{equation}
By using \eqref{eq:ineq2} to estimate the $\|B_2'u\|_{L^2}$ term in \eqref{eq:ineq1}, the elliptic estimate \cite[Theorem E.33]{DyZw} and the fact that ${\rm WF}_h(B_2) \subset {\rm WF}_h(B_1)$
\[\|Au\|_{L^2} \leq Ch^{-1} \|B_1(P_h(\la_h) - ihW) u\|_{L^2} + C\|Bu\|_{L^2} + \varepsilon_0 C \|Au\|_{L^2} + Ch^{N_1} \|u\|_{H_h^{-N_1}}.\]
Taking $\varepsilon_0$ small enough, we absorb the $\varepsilon_0 C \|Au\|_{L^2}$ term to the left hand side, thus completing the proof.
\end{proof}
We will now use this propagation estimate to deduce some regularity of the semiclassical measure $\mu$ in the direction of $H_{p_-}$. Recall the definition of $V_{\min}$, $V_{\max}$ in \eqref{eq:V_maxV_min}. 
\begin{prop}\label{regularitymu}
Assume $\gamma<\nu_{\min}-V_{\max}$ in \eqref{eq:A1}. If additionally $\beta\geq 2$ in \eqref{eq:A1}, for $\delta > 0$ small enough and any $a \in C_c^\infty(\mc{U}_{\delta})$, we have
 \begin{equation}\label{invariancep_-mu}
 \int_{\mc{U}_{\delta}} (H_{p_-}a - (-\divv U_-+2\alpha_V)a) d\mu=0,
 \end{equation}
where ${\rm div}$ denotes the divergence with respect to the contact measure $\alpha\wedge d\alpha$. Moreover, if \eqref{eq:A1} holds with $\|P_h(\lambda_h)u_h\|_{\mc{H}_h^{NG}} = \mc{O}(h^2)$ there is a $C>0$ such that
\begin{equation}\label{eq:weakhorocycleinvariance}
	\Big|\int_{\mc{U}_{\delta}} H_{p_-}(a)\, d\mu\Big| \leq C \|a\|_{L^\infty}.
\end{equation}
 \end{prop}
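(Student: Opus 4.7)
The strategy is to extract horocycle invariance of $\mu$ from the regularized identity $W_h u_h \approx 0$, where $W_h := h(U_- + \alpha_V)^h$, by applying Lemma \ref{lemm:averageestimate} to $v_h := W_h u_h$ and then running a commutator computation. Rearranging the commutation relation of Lemma \ref{[X,U_-^h]} yields
\[(P_h(\lambda_h) - ihr_-^h)\, v_h = W_h P_h(\lambda_h) u_h - ih^{2+\beta_0\rho} R_h u_h,\]
which has precisely the form required by Lemma \ref{lemm:averageestimate} with damping $W := r_-^h \in \Psi_{h,\rho,2-\eps}^0$, whose real principal symbol converges uniformly to $r_-$. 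The averaged condition \eqref{eq:averagecondition} holds on $\mc{U}_\delta \cap \Gamma_+$ for $T$ large: since $e^{-tH_p}$ projects to $\varphi_{-t}$, the hypothesis $\gamma < \nu_{\min} - V_{\max}$ combined with \eqref{eq:muminmax} and \eqref{eq:V_maxV_min} gives $\frac{1}{T}\int_0^T (r_- - V - \gamma)(\varphi_{-s}(x))\,ds \geq \nu_{\min} - V_{\max} - \gamma - o(1) > 0$ uniformly in $x\in \mc{M}$.

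The first key step is to show that $\|A v_h\|_{\mc{H}_h^{NG}} = o(h)$ for every $A \in \Psi_h^{\rm comp}$ microsupported in $\mc{U}_\delta$. After conjugating to $\tilde v_h := \Op_h(e^{NG}) v_h$, one applies Lemma \ref{lemm:averageestimate} and bounds the three terms of \eqref{eq:ineq0}: (i) $h^{-1}\|B_1(\widetilde{P_h(\lambda_h)} - ih\widetilde{r_-^h})\tilde v_h\|_{L^2}$ is $o(h)$ using $L^2$-boundedness of $B_1\widetilde{W_h}$, the hypothesis $\|\widetilde{P_h(\lambda_h)}\tilde u_h\|_{L^2} = o(h^2)$, and the fact that $Ch^{1+\beta_0\rho}=o(h)$ since $\beta_0\rho>0$; (ii) $\|B\tilde v_h\|_{L^2}=o(h)$ via the decomposition $B\tilde v_h = \widetilde{W_h}(B\tilde u_h) + [B,\widetilde{W_h}]\tilde u_h$, where the first summand is $o(h)$ because $\WF_h(B)\cap\Gamma_+=\emptyset$ forces $\|B\tilde u_h\|_{L^2}=o(h^{\beta-1})=o(h)$ (remark after Lemma \ref{lemm:support}) and $\widetilde{W_h}$ is bounded on compactly microsupported data, while $[B,\widetilde{W_h}]=hQ_h$ with $\sigma(Q_h)$ vanishing in a neighborhood of $\Gamma_+$, so $\|[B,\widetilde{W_h}]\tilde u_h\|_{L^2}^2 \leq h^2\langle Q_h^*Q_h\tilde u_h,\tilde u_h\rangle \to h^2\int |\sigma(Q_h)|^2\,d\tilde\mu=0$ since $\supp(\tilde\mu)\subset \Gamma_+$; (iii) the last term is $\mc{O}(h^{N_1})$.

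The second step is the commutator identity. For $a\in C_c^\infty(\mc{U}_\delta)$ one computes $\langle [W_h,\Op_h(a)] u_h, u_h\rangle_{L^2}$ in two ways. Symbolically, $[W_h,\Op_h(a)]$ has principal symbol $h\{(p_-)^h, a\}$, which converges uniformly on $\supp(a)$ to $h H_{p_-} a$ (using $U_-^h\to U_-$ in $C^0$ and $\pl_x U_-^h\to \pl_x U_-$ in $C^0$ by Lemma \ref{l:approximation}), so the pairing equals $h\int H_{p_-} a\,d\mu + o(h)$. Expanding the commutator as $W_h\Op_h(a)-\Op_h(a)W_h$ and using the adjoint formula $W_h^* = -W_h + h(-\divv U_- + 2\alpha_V)^h + \mc{O}(h^{1+\beta_0\rho})$ (integration by parts in charts with respect to the contact measure $\alpha\wedge d\alpha$), the two terms involving $v_h$ are absorbed into $o(h)$ by Step 1 via a cutoff $\tilde a \in C_c^\infty(\mc{U}_\delta)$ that is $1$ near $\supp(a)$, followed by Cauchy--Schwarz and the equivalence of $L^2$ and $\mc{H}_h^{NG}$ norms on compactly microsupported functions. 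This yields $h\int (-\divv U_-+2\alpha_V)a\,d\mu + o(h)$. Equating and dividing by $h$ proves \eqref{invariancep_-mu}.

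For the weaker bound \eqref{eq:weakhorocycleinvariance}, the same argument runs with $o(h)\mapsto \mc{O}(h)$ under the hypothesis $\|P_h(\lambda_h)u_h\|_{\mc{H}_h^{NG}}=\mc{O}(h^2)$; the quantization bound $\|\Op_h(a)\|_{L^2\to L^2}\leq C\|a\|_{L^\infty}$ lets the constant $C$ be tracked so that $|\int H_{p_-}a\,d\mu - \int(-\divv U_-+2\alpha_V)a\,d\mu|\leq C\|a\|_{L^\infty}$, giving \eqref{eq:weakhorocycleinvariance} since $|\int(-\divv U_-+2\alpha_V)a\,d\mu|\leq \|-\divv U_-+2\alpha_V\|_{L^\infty}\mu(\mc{U}_\delta)\|a\|_{L^\infty}$. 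The principal obstacle throughout is Step 1: ensuring that Lemma \ref{lemm:averageestimate} and all the symbol-calculus manipulations remain valid for $W_h$ inside the exotic class $\Psi_{h,\rho,k}^m$, which relies on the pinching $\rho > 1/(2-\beta_0)$ of Lemma \ref{[X,U_-^h]} and on the exotic-calculus results summarized in Appendix \ref{app:exoticcalc}.
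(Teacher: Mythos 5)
Your proof is correct and follows essentially the same route as the paper. You establish $\|A h(U_-+\alpha_V)^h u_h\|=o(h)$ microlocally in $\mc{U}_\delta$ by rearranging the commutation relation of Lemma~\ref{[X,U_-^h]} and feeding it into Lemma~\ref{lemm:averageestimate} with damping $W=r_-^h$, then extract the invariance identity — exactly the paper's structure. The one presentational difference is in the last step: the paper applies Proposition~\ref{prop:semiclassicalmeasurepropagation} to $P=-ih(U_-+\alpha_V)^h$ (reading off $b_0=\tfrac{1}{2}\divv U_- - \alpha_V$ from $h^{-1}\Im P$), whereas you unroll that proposition's proof by computing $\cjg [W_h,\Op_h(a)]u_h,u_h\cjd$ symbolically and via the adjoint formula for $W_h^*$; the mechanism (the symmetric part yields $H_{p_-}a$, the antisymmetric part yields the divergence term) is identical.

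Two minor bookkeeping remarks. First, Step 1 in the paper is run directly in $L^2$ with $v_h=h(U_-+\alpha_V)^h u_h\in H_h^{-N-1}$, without the explicit conjugation by $\Op_h(e^{NG})$ you perform; both bookkeepings work since $A,B,B_1$ are compactly microsupported. Second, your estimate $\|[B,\widetilde{W_h}]\tilde u_h\|=h\cdot o(1)$ via the semiclassical measure suffices for $\beta=2$, but the remark after Lemma~\ref{lemm:support} actually gives the sharper $o(h^{\beta-1})$ directly, which is what the paper invokes (and this sharper rate is what one would need if pushing the argument further); the conclusion is unchanged here.
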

 \begin{proof}
Denoting $U_-^V := U_-+\alpha_V$, the commutation relation in Lemma \ref{[X,U_-^h]} reads:
\[ [h(X-V),h(U_-^V)^h]=-r_-^hh^2(U_-^V)^h +h^{2+\rho \beta_0}R_h, \quad R_h\in \Psi^1_{h,\rho}(\mc{M}),
\]
for some $\beta_0>0$.
Therefore, we have the relation
\begin{equation}\label{eq:commutationnew}
(P_h(\la_h) - ihr_-^h) h(U_-^V)^h = h(U_-^V)^h P_h(\la_h) - ih^{2+\beta_0\rho}R_h.
\end{equation} 
Next, for each $\eps>0$ so that $\nu_{\min}-V_{\max}-\gamma>3\eps$, there is $T_0>0$ such that for all $T>T_0$
\[\frac{1}{T}\int_0^T (r_- - V)\circ \varphi_{-t} 
\, dt > \nu_{\min}-V_{\max}-2\eps > \gamma+\eps.\] 
Thus, we can apply Lemma \ref{lemm:averageestimate} with the functions 
$h(U_-^V)^h u_h \in H_h^{-N - 1}(\M)$ and with $W:= r_-^h \in \Psi_{h, \rho, 2-\varepsilon}^0(\M)$, which gives
\begin{multline}\label{eq:horocyclicmicrolocalisation}
\|Ah(U_-^V)^h u_h\|_{L^2} \leq Ch^{-1} \|B_1(P_h(\la_h) - ihr_-^h)h(U_-^V)^h u_h\|_{L^2} + C \|Bh(U_-^V)^hu_h\|_{L^2}\\
 + \mc{O}(h^\infty) \|h(U_-^V)^h u_h\|_{H_h^{-N - 1}},
\end{multline}
where $A, B, B_1 \in \Psi_h^{\comp}(\M)$ satisfy the conditions of Lemma \ref{lemm:averageestimate}.
	 
We analyse \eqref{eq:horocyclicmicrolocalisation} by studying each term on the right hand side separately. Firstly, since ${\rm WF}_h(B)$ does not intersect  $\Gamma_+ \cap \{p = -1\}$, using the remark after Lemma \ref{lemm:support}, we have
\[\|Bh(U_-^V)^h u_h\|_{L^2} = o(h^{\beta-1}), \quad h \to 0.\]
Here we also used that $h(U_-^V)^h \in \Psi_{h, \rho, 0}^1(\M)$ is suitably bounded by Proposition \ref{prop:calcprops} (items 1 and 6). By the same boundedness properties 
\[\|h(U_-^V)^h u_h\|_{H_h^{-N - 1}} \leq C \|u_h\|_{H_h^{-N}} = \mc{O}(1),\]
which shows that the last term of \eqref{eq:horocyclicmicrolocalisation} equals $\mc{O}(h^\infty)$. Finally, for the first term it suffices to estimate, by \eqref{eq:commutationnew}
\begin{align*}
h^{-1}  \|B_1(h(U_-^V)^hP_h(\la_h) - ih^{2+ \rho\beta_0} R_h) u_h\|_{L^2} &\leq 
h^{-1} \|B_1h(U_-^V)^hP_h(\la_h) u_h\|_{L^2} + h^{1+\rho \beta_0}\|B_1 R_h u_h\|_{L^2}\\ 
&\leq h^{-1} \|P_h(\la_h) u_h\|_{H_h^{-N}}  + h^{1 + \rho \beta_0}\|u_h\|_{H_h^{-N}} + \mc{O}(h^\infty)\\
&= o(h^{\beta - 1}) + \mc{O}(h^{1 + \rho \beta_0}) + \mc{O}(h^\infty).
\end{align*}
We used that $\|R_h\|_{H^1_h\to L^2}=\mc{O}(1)$ by $\sigma(R_h)=\mc{O}(1)$ and Proposition \ref{prop:calcprops} (item 6). Therefore, by \eqref{eq:horocyclicmicrolocalisation}
\[\|Ah(U_-^V)^h u_h\|_{L^2} = o(h^{\beta - 1}) + \mc{O}(h^{1+ \rho \beta_0}),\]
and so $h(U_-^V)^h u_h = o(h)$ microlocally in $\mc{U}_\delta$ if  $\beta = 2$.
Now note that
\[h^{-1}\Im (-ih(U_-^V)^h) = \frac{-ih(U_-^V)^h - (-ih(U_-^V)^h)^*}{2i h} = \frac{1}{2}\divv U_-^h -\alpha_V^h= \mc{O}_{\Psi^0_{h,\rho,0}(\mc{M})}(1),\]	  
by Lemma \ref{l:approximation}. The main result \eqref{invariancep_-mu} then follows by Proposition \ref{prop:semiclassicalmeasurepropagation}, since $\divv U_-^h \to \divv U_-$, $\alpha_V^h\to \alpha_V$ and $H_{\sigma(-ih(U_-^V)^h)} a \to H_{p_-}a$ uniformly.
	  
Finally, if $\|P_h(\lambda_h)u_h\|_{\mc{H}_h^{NG}} = \mc{O}(h^2)$, a similar argument gives $h(U_-^V)^h u_h = \mc{O}(h)$ microlocally in $\mc{U}_\delta$. The final conclusion follows again by applying Proposition \ref{prop:semiclassicalmeasurepropagation}.
\end{proof}
We now show that Proposition \ref{regularitymu} implies some Lipschitz regularity of $\mu$ inside $\Gamma_+$. 

\begin{lemm}\label{lemm:lipschitz} 
Assume that $\gamma<\nu_{\min}-V_{\max}$ in \eqref{eq:A1}. If additionally $\beta\geq 2$ in \eqref{eq:A1}, then for $\delta > 0$ small enough, there is a $C > 0$ such that for every $\delta_0 > 0$ small enough
\begin{equation}\label{eq:lipschitz}
 \frac{\delta_0}{C}\leq \mu(\mathcal{U}_\delta \cap \{|p_+| < \delta_0\}) \leq C\delta_0.
\end{equation}
Moreover, if \eqref{eq:A1} is valid with $\|P_h(\lambda_h)u_h\|_{\mc{H}_h^{NG}} = \mc{O}(h^2)$, then the upper bound in \eqref{eq:lipschitz} holds.
\end{lemm}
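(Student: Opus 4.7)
\textit{Proof plan.} The strategy is to apply Proposition \ref{regularitymu} to well-localized test functions, exploiting transversality from the contact condition. The key algebraic observation is that the Poisson bracket $\{p_-, p_+\}(x, \xi) = \xi([U_-, U_+])$ is bounded away from $0$ with a definite sign on $\mc{U}_\delta$ for $\delta$ small. Indeed, Cartan's formula combined with $\alpha(U_\pm) = 0$ gives $\alpha([U_-, U_+]) = -d\alpha(U_-, U_+) =: f_0$, which is nowhere vanishing with constant sign (contact condition and orientability of $E_u, E_s$); writing $[U_-, U_+] = f_0 X + g_+ U_+ + g_- U_-$ yields $\{p_-, p_+\} = f_0 p + g_+ p_+ + g_- p_-$, which on $\mc{U}_\delta$ (where $p \approx -1$, $p_\pm \approx 0$) satisfies $|\{p_-, p_+\}| \geq c_0 > 0$ for $\delta$ small. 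WLOG assume $\{p_-, p_+\} \leq -c_0$.

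\textbf{Upper bound.} Pick $\phi \in C_c^\infty(\rr)$ with $\phi = 1$ on $[-1, 1]$, $\supp \phi \subset [-2, 2]$, $0 \leq \phi \leq 1$, and set $\phi_{\delta_0}(t) := \phi(t/\delta_0)$, $F_{\delta_0}(t) := \int_0^t \phi_{\delta_0}(s)\,ds$ (so $|F_{\delta_0}| \leq 2\delta_0$). Pick $\chi \in C_c^\infty(\mc{U}_{2\delta})$ equal to $1$ on $\mc{U}_\delta$ and apply \eqref{eq:weakhorocycleinvariance} to $a := F_{\delta_0}(p_+)\chi$. Although $a$ is only $C^{2-}_c(\mc{U}_{2\delta})$ (since $p_+ \in C^{2-}$), \eqref{eq:weakhorocycleinvariance} extends from $C_c^\infty$ to $C_c^1$ by $C^1$-density (the constant $C$ is independent of $a$ and $\mu$ is Radon). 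Since $\|a\|_\infty = \mc{O}(\delta_0)$, we get $|\int H_{p_-}(a)\,d\mu| \leq C\delta_0$. Expanding $H_{p_-}(a) = \phi_{\delta_0}(p_+)\{p_-, p_+\}\chi + F_{\delta_0}(p_+) H_{p_-}(\chi)$, the second term is $\mc{O}(\delta_0)$ (using $|F_{\delta_0}| = \mc{O}(\delta_0)$, $\|H_{p_-}(\chi)\|_\infty < \infty$ and $\mu(\mc{U}_{2\delta}) < \infty$), while the first term has absolute value $\geq c_0\, \mu(\mc{U}_\delta \cap \{|p_+| < \delta_0\})$ by the sign of $\{p_-, p_+\}$, $\phi_{\delta_0} \geq \mathbf{1}_{|\cdot| < \delta_0}$, and $\chi \equiv 1$ on $\mc{U}_\delta$.

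\textbf{Lower bound (requires $\beta \geq 2$).} Here I use the full identity \eqref{invariancep_-mu}, interpreted as saying $\mu$ is a weak solution of a transport equation in the direction of $H_{p_-}$. Solving by characteristics yields quasi-invariance of $\mu$ under the flow $e^{tH_{p_-}}$: for any $a \in C_c^\infty(\mc{U}_\delta)$ and $|t| \leq T$ (bounded depending on $\delta$),
\[ \int a \circ e^{-tH_{p_-}}\,d\mu = \int a \cdot J_t\,d\mu, \qquad C^{-1} \leq J_t \leq C,\]
with $J_t = \exp\bigl(\int_0^t (-\divv U_- + 2\alpha_V)\circ e^{-sH_{p_-}}\,ds\bigr)$, bounded since the exponent is. By Lemma \ref{lemm:support}, $\mu(\mc{U}_{\delta/2}) > 0$ for $\delta$ small (since $K_R \subset \mc{U}_{\delta/2}$ for $R \ll \delta$). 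The transversality from the setup shows $e^{-tH_{p_-}}$ shifts $p_+$ at rate $\approx +c_0$, so with $t_k := k\delta_0/c_0$ the translates $e^{-t_k H_{p_-}}(\mc{U}_{\delta/2} \cap \{|p_+| < \delta_0\})$ for $|k| \leq K := \lceil \delta/(4\delta_0) \rceil$ cover $\mc{U}_{\delta/2} \cap \supp(\mu)$ up to a $\mu$-null set; quasi-invariance then gives
\[ \mu(\mc{U}_{\delta/2}) \leq (2K+1)\, C\, \mu(\mc{U}_{\delta/2} \cap \{|p_+| < \delta_0\}),\]
from which the lower bound $\mu(\mc{U}_\delta \cap \{|p_+| < \delta_0\}) \geq c \delta_0$ follows.

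\textbf{Main obstacle.} The delicate point is making the quasi-invariance rigorous despite $U_-$ being only $C^{2-}$: the flow $e^{tH_{p_-}}$ has $C^{1-}$ coefficients on phase space, so $a \circ e^{-tH_{p_-}}$ is not $C^\infty$ and cannot be inserted directly into \eqref{invariancep_-mu}. I would handle this either by replacing $U_-$ by the mollified $U_-^h$ from Section 4.1, running the argument with the associated smooth flow and passing to the limit $h \to 0$ (consistent with the semiclassical scaling), or by extending \eqref{invariancep_-mu} to compactly supported Lipschitz test functions via $C^1$-density, which is enough since $a \circ e^{-tH_{p_-}}$ is Lipschitz.
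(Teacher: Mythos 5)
Your proof is correct and follows the paper's overall strategy: both bounds rest on the transversality $H_{p_-}(p_+) = \xi([U_-,U_+])$ being bounded away from $0$ on $\mc{U}_\delta$ (a consequence of the contact condition and orientability), combined with the horocyclic invariance of $\mu$ from Proposition \ref{regularitymu}. Your upper bound, using the primitive $F_{\delta_0}$ of a bump and \eqref{eq:weakhorocycleinvariance}, is a cosmetic variant of the paper's choice of test function $\til a$ with controlled one-sided derivative bounds. For the lower bound the route differs: the paper derives the quasi-invariance \eqref{eq:horocyclicinvarianceflow}, integrates it over a fixed time window, and performs the change of variables $t \mapsto p_+(e^{tH_{p_-}}(z))$ to bound the inner $t$-integral by $\frac{1}{c_2}\int \til a$, whereas you deduce quasi-invariance of $\mu$ on sets and cover $\mc{U}_{\delta/2}\cap\supp\mu$ by $\mc{O}(\delta/\delta_0)$ translates of the thin strip $\{|p_+|<\delta_0\}$ along $e^{tH_{p_-}}$. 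The two arguments encode the same geometric fact (the disintegration of $\mu$ along $H_{p_-}$-orbits is comparable to Lebesgue measure in $p_+$); yours is slightly more elementary but needs quasi-invariance at the level of Borel sets, i.e.\ an approximation of indicators, while the paper works only with the identity on test functions. Two small points of care: (i) the $p_+$-increment per unit flow time lies in $[c_1,c_2]$, so the covering step size should be taken $\lesssim \delta_0/c_2$ to avoid overshooting $\{|p_+|<\delta_0\}$ -- this only changes constants; (ii) in your formula for $J_t$ the exponent should be $\exp\bigl(-\int_0^t(-\divv U_-+2\alpha_V)\circ e^{sH_{p_-}}\,ds\bigr)$ to match pushing forward by $e^{-tH_{p_-}}$, though only its boundedness is used. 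Finally, you correctly identify the obstacle that $a\circ e^{tH_{p_-}}$ is only $C^{1-}$ (the lifted flow is not $C^1$); the paper also relies on extending \eqref{invariancep_-mu} to less regular test functions by density, so your two proposed resolutions (mollified flow, or Lipschitz density) are consistent with theirs.
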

\begin{proof}
First, we make the following observation using the contact structure $\alpha$:
we have 
\begin{equation}\label{OmegaU_+U_-}
H_{p_-}(p_+)(x,\xi) =\xi([U_-,U_+](x)) ,
\end{equation}
and $[U_-, U_+]$ is a $C^{1-}$ vector field that does not vanish such that $0 \neq \alpha([U_-, U_+]) = -d\alpha(U_-, U_+)$ (as $U_\pm\in \ker \alpha$), by the relation 
\begin{equation}\label{U_+U_-not=0}
0 \neq \alpha\wedge d\alpha(X, U_-, U_+) = 2d\alpha(U_-, U_+).
\end{equation}
This means that either $H_{p_-}(p_+)(x,\xi)>c_1$ or $-H_{p_-}(p_+)(x,\xi)> c_1$ for some $c_1>0$ on $\mc{U}_\delta$ if $\delta>0$ is small enough.
Let $\widetilde{a} \in C_c^\infty(\mathbb{R})$ with $\supp (\widetilde{a}) \subset (-2\delta_0, \delta_1)$ satisfying
\begin{equation}\label{eq:aproperties}
\|\widetilde{a}\|_\infty \leq 1, \quad \partial_s \widetilde{a} \geq -\frac{2}{\delta_1}, \quad \partial_s \widetilde{a} \geq \frac{1}{3\delta_0} \quad \mathrm{for} \quad |s| \leq \delta_0,
\end{equation}
for some small fixed $\delta_1 \in (0, \delta)$, independent of $\delta_0$ (here we take $\delta_0<\delta_1/2$). Let $\chi\in C_c^\infty(-\delta,\delta)$ be equal to $1$ in $(-\delta/2,\delta/2)$. Consider 
$a(x,\xi):=\til{a}(p_+(x,\xi))\chi(p_-(x,\xi))\chi(p(x,\xi)+1)$ which is a $C_c^{2-}(\mc{U}_\delta)$ function. 
If $H_{p_-}(p_+) >c_1$ on $\mc{U}_{\delta}$, using that $\supp(\mu)\subset (\Gamma_+\cap \{p=-1\})
\subset \{|p_-|\leq \delta/2, |p+1|\leq \delta/2\}$, we get
\[ \int_{\mc{U}_\delta} H_{p_-}(a)\, d\mu =\int_{\mc{U}_\delta} \partial_s\til{a}(p_+)H_{p_-}(p_+)\, d\mu\geq \frac{c_1}{3\delta_0}\mu(\{|p_+|\leq \delta_0\})-\frac{2c_1}{\delta_1}\mu(\{|p_+|\in (\delta_0,\delta_1)\}).\]
The result of Proposition \ref{regularitymu} (in the case $\|P_h(\lambda_h)u_h\|_{\mc{H}_h^{NG}} = \mc{O}(h^2)$ as well) also holds for $a\in C_c^{1}(\mc{U}_\delta)$ by using a density argument, so we can apply it to our function $a$ and, since $\mu(\{|p_+|\in (\delta_0,\delta_1)\})\leq \mu(\mc{U}_\delta)$, we get that there is a $C>0$ such that for all $\delta_0 > 0$ small
\[ \mu(\{|p_+|\leq \delta_0\})\leq C\delta_0.\]
In the case where $H_{p_-}(p_+)(x,\xi)<0$ we can do the same reasoning by bounding below the integral $-\int_{\mc{U}_\delta} H_{p_-}(a)\, d\mu$.

To obtain a lower bound for the measure of $\mc{U}_\delta\cap \{|p_+|\leq \delta_0\}$, we can proceed as follows. Without loss of generality, we can 
assume that $c_2>H_{p_-}(p_+)>c_1>0$ as above in $\mc{U}_\delta$ -- this precisely means that $p_+$ is increasing along $e^{tH_{p-}}$. Moreover, we compute $H_{p_-} p (x, \xi) = \xi([U_-, X](x)) = r_-(x) p_-(x, \xi)$, and thus both $p$ and $p_-$ are constant along $e^{tH_{p_-}}$ on $\Gamma_+$. 

We integrate \eqref{invariancep_-mu} to get, for each $a\in C_c^\infty(\mc{U}_{\delta/2})$ and $|t| \leq \frac{\delta}{2c_2}$ (so $\supp(a \circ e^{tH_{p_-}}) \subset \mc{U}_{\delta}$)
\begin{equation}\label{eq:horocyclicinvarianceflow}
	\int_{\mc{U}_\delta} a\circ e^{tH_{p_-}}\, d\mu=\int_{\mc{U}_\delta} e^{\int_0^t(-\divv(U_-)+2\alpha_V)\circ e^{-sH_{p_-}}ds} a \, d\mu.
\end{equation}
We thus get, using that $|-\divv(U_-)+2\alpha_V|$ is uniformly bounded, that there is $C>0$ independent of $\delta_0$ such that
\begin{equation}\label{uperbound1} 
\int_{0}^{\frac{\delta}{2c_2}}\int_{\mc{U}_\delta} a\circ e^{tH_{p_-}}\, d\mu \,dt\leq C\int_{\mc{U}_\delta}a \, d\mu.
\end{equation}
We choose $a={\til a}(p_+)\chi(p+1)\chi(p_-)$ with $\chi$ as above, $\til{a}\in C_c^\infty((-\delta_0,\delta_0); \rr^+)$ satisfying $\til{a}=1$ on $\{ |p_+|\leq \delta_0/2\}$. For each $z\in \mc{U}_{\delta/2}$, the map $t\mapsto p_+(e^{tH_{p_-}}(z))$ is a $C^1$-diffeomorphism for $|t| \leq \frac{\delta}{2c_2}$ since $H_{p_-}(p_+)\in [c_1,c_2]$. Thus we can perform the change of variable $q=p_+(e^{tH_{p_-}}(z))$
\[\begin{split} 
\int_{0}^{\frac{\delta}{2c_2}}\int_{\mc{U}_{\delta/2}} a(e^{tH_{p_-}}(z)) \, d\mu(z)dt =& 
 \int_{\mc{U}_{\delta/2}}\int_{p_+(z)}^{p_+\big(e^{\frac{\delta}{2c_2}H_{p_-}}(z)\big)}\frac{\til{a}(q)}{H_{p_-}(p_+)(e^{t(q)H_{p_-}}(z))}dq d\mu(z)\\
 \geq & \frac{1}{c_2}\int_{\mc{U}_{\delta/2}}\int_{p_+(z)}^{p_+\big(e^{\frac{\delta}{2c_2} H_{p_-}}(z)\big)}\til{a}(q)dq d\mu(z)\\
 \geq &  \frac{\delta_0}{c_2}\mu(\underbrace{\{z\in \mc{U}_{\delta/2} \, | \, (-\delta_0/2,\delta_0/2)\subset [p_+(z), p_+\big(e^{\frac{\delta}{2c_2}H_{p_-}}(z)\big)]\}}_{A:=}).
 \end{split} \]
Since  $H_{p_-}p_+ \in [c_1,c_2]$ on $\mc{U}_\delta$, the set $A$ contains $p_+^{-1}(\big[\frac{\delta_0}{2} - \frac{\delta c_1}{2c_2},-\frac{\delta_0}{2}\big])$.
Note first that by the Lipschitz bound and Lemma \ref{lemm:support}, for any $\delta > 0$ we have $\mu (\mc{U}_\delta \setminus E_0^*) > 0$. Then the horocyclic invariance \eqref{eq:horocyclicinvarianceflow} implies that for all $R_1, R_2 \in \mathbb{R}$ small enough, the strip $S_{R_1, R_2} = \{z \in T^* \M \,|\, R_1 \leq p_- \leq R_2\}$ satisfies $\mu(S_{R_1, R_2}) > 0$. 
Thus for $\delta_0$ small enough, $\mu(A)\geq C'$ for some constant $C'>0$ depending only on $c_1$, $c_2$ and $\delta$ (and not on $\delta_0$). We therefore obtain, by combining with \eqref{uperbound1}, 
that there is $C''>0$ such that for all $\delta_0>0$ small 
\[   \mu(\mc{U}_\delta\cap \{|p_+|\leq \delta_0\})\geq\int_{\mc{U}_\delta}a \, d\mu \geq C''\delta_0,\]
concluding the proof.
\end{proof}

\section{Proof of the main theorem}
 
We proceed to the proof of the main result. Note that Theorem \ref{th:intro} is an immediate corollary of Theorem \ref{th:intro2} for the case $V = 0$.

\begin{proof}[Proof of Theorem \ref{th:intro2}] 
We proceed by contradiction. Assume that the estimates in \eqref{eq:semiclassicalbounds} do not hold, i.e. there is a sequence $h_n\to 0$, $\la_n\in \cc$ and $u'_n,f_n\in \mc{H}_h^{NG}$ with norm $\|f_n\|_{\mc{H}_h^{NG}}=1$, such that $(-ih_nX + ih_nV - i\lambda_n)u_n'=f_n$ satisfies $\|u_n'\|_{\mc{H}_h^{NG}}>Ch_n^{-2}$ for some $C > 0$ (resp. $\|u_n'\|_{\mc{H}_h^{NG}}>nh_n^{-2}$) if $h_n^{-1} \Re \lambda_n \in \mc{S}_0(\eps)$ (resp. $h_n^{-1} \Re \lambda_n \in \mc{S}_1(\eps)$). Up to extracting a subsequence and rescaling $h_n$, we can assume that $\Im(\la_n) \to 1$ and $h_n^{-1}\Re(\lambda_n) \to -\gamma$ as $n \to \infty$ with $-\gamma  \in \mc{S}_0(\eps)$ (resp. $-\gamma \in \mc{S}_1(\eps)$). Define $u_n:= u_n'/\|u_n'\|_{\mc{H}_h^{NG}}\in \mc{H}_h^{NG}$ which satisfies 
\[ \|P_{h_n}(\la_n)u_n\|_{\mc{H}_h^{NG}}=\frac{\|f_n\|_{\mc{H}_h^{NG}}}{\|u_n'\|_{\mc{H}_h^{NG}}} = \begin{cases}
\mc{O}(h_n^{2}), \quad -\gamma \in \mc{S}_0(\varepsilon),\\
o(h_n^{2}), \quad \,\,-\gamma \in \mc{S}_1(\varepsilon).
\end{cases}\] 
This means that we are in the case of \eqref{eq:A1} with $\|P_h(\lambda_h) u_h\|_{\mc{H}_h^{NG}} = \mc{O}(h^2)$ and $N \geq \frac{1}{2}$, if $-\gamma \in \mc{S}_0(\eps)$, or with $\beta=2$ and $N \geq 1$, if $-\gamma \in \mc{S}_1(\eps)$.

By Lemma \ref{lemm:semiclassicalmeasureexistence} there is a semiclassical measure $\mu$ associated to $u_h$, which by Lemmas \ref{lemm:semiclassicalmeasureproperties} and \ref{lemm:support} has support in $\Gamma_+\cap \{p=-1\}$. By Lemma \ref{lemm:semiclassicalmeasureproperties}, we have for every $a \in C_c^\infty(\mc{U}_\delta)$ and $t \geq 0$ (see \eqref{defofU_delta} for the definition of $\mc{U}_\delta$)
\[\begin{gathered}
 \int_{\mc{U}_\delta} a\circ e^{tH_p}\, d\mu=\int_{\mc{U}_\delta} e^{-2\gamma t-2\int_0^t 
V\circ e^{-rH_p}dr} a \, d\mu.  \end{gathered}\]
Therefore for all $\eps' > 0$ and $t \geq T_{\varepsilon'}$ large enough, we have $e^{-tH_p} (\Gamma_+ \cap \mc{U}_\delta) \subset \Gamma_+ \cap \mc{U}_\delta$ and
\begin{equation}\label{eq:invofmubyflow}
e^{-2t(\gamma+V_{\max}+\eps')} \mu(\Gamma_+ \cap \mc{U}_\delta)\leq \mu(e^{-tH_p} (\Gamma_+ \cap \mc{U}_\delta)) \leq e^{-2t(\gamma+V_{\min}-\eps')} \mu(\Gamma_+ \cap \mc{U}_\delta).
\end{equation}
Moreover, we claim that for each $\varepsilon' > 0$ and $t \geq T_{\varepsilon'}$ large enough
 \begin{equation}\label{eq:exponentialiteration}
 	\{|p_+| \leq e^{-(\nu_{\max} + \varepsilon')t}\delta\} \cap \Gamma_+ \cap \mathcal{U}_\delta \subset e^{-tH_p}(\Gamma_+ \cap \mathcal{U}_\delta) \subset \{|p_+| \leq e^{-(\nu_{\min} - \varepsilon')t}\delta\} \cap \Gamma_+ \cap \mathcal{U}_\delta.
 \end{equation}
 To see this, first note that $H_p(p_+)(x, \xi) = \xi([X,U_+](x)) = r_+(x) p_+(x, \xi)$ by \eqref{[X+V,U_-]} and thus
 \[p_+(e^{-tH_p}(x, \xi)) = e^{-\int_0^t r_+(\varphi_{-r}(x)) dr} p_+(x, \xi), \quad t \in \mathbb{R},\]
and we obtain, by \eqref{eq:muminmax2}, that for $t \geq T_{\varepsilon'}$ large enough and $(x, \xi) \in \mc{U}_\delta$
 \[ e^{-(\nu_{\max} + \varepsilon')t}|p_+|(x,\xi)\leq |p_+|(e^{-tH_p}(x, \xi)) \leq e^{-(\nu_{\min} - \varepsilon')t}\delta,\]
thus giving \eqref{eq:exponentialiteration}. If $-\gamma \in \mc{S}_1(\varepsilon)$, by \eqref{eq:exponentialiteration} and Lemma \ref{lemm:lipschitz} (note it is here that we use $-\gamma > -\nu_{\min} + V_{\max}$), we obtain that there is $C = C(\varepsilon') > 0$ such that for $\varepsilon' > 0$ and $t \geq T_{\varepsilon'}$
\begin{equation}\label{eq:sandwich}
	C^{-1}e^{-(\nu_{\max}+\varepsilon')t} \leq \mu(e^{-tH_p} (\Gamma_+ \cap \mc{U}_\delta)) \leq C e^{-(\nu_{\min} - \varepsilon')t}.
\end{equation}
Combining these inequalities with \eqref{eq:invofmubyflow}, we obtain for all $t \geq T_{\varepsilon'}$
\begin{equation}\label{eq:sandwich2}
	C^{-1}e^{(2\gamma+2V_{\min}-\nu_{\max}-3\varepsilon')t}\leq \mu(\Gamma_+ \cap \mc{U}_\delta) \leq C e^{(2\gamma+2V_{\max}-\nu_{\min} + 3\eps')t}.
\end{equation}
Since $-2\gamma < -\nu_{\max} + 2V_{\min} - 2\varepsilon$ we get a contradiction by choosing $\varepsilon'$ small and letting $t\to \infty$.

Next, if $-\gamma \in \mc{S}_0(\eps)$, by \eqref{eq:exponentialiteration} and Lemma \ref{lemm:lipschitz} we similarly get the upper bound of \eqref{eq:sandwich}, which combined with \eqref{eq:invofmubyflow} yields the upper bound in \eqref{eq:sandwich2}. Since $-2\gamma > -\nu_{\min} + 2V_{\max} + 2\eps$, we can choose $\varepsilon'$ small enough and by letting $t\to \infty$ this would force to have $\mu(\mc{U}_\delta) = 0$, contradicting Lemma \ref{lemm:support}. 

Finally, the classical estimates \eqref{eq:classicalbounds} follow by introducing a semiclassical parameter $h := |\Im(s)|^{-1}$ and applying the semiclassical estimates \eqref{eq:semiclassicalbounds}, as well as \eqref{eq:isotropic-anisotropic}.
\end{proof}

\appendix
\section{An exotic symbol class}\label{app:exoticcalc}

Let $\mc{M}$ be a closed $n$-manifold equipped with a Riemannian metric $g$, let $k\geq 0$ and $0 <\rho<1$. We use the usual notation $\cjg \xi\cjd =(1+|\xi|^2)^{1/2}$, $h>0$ will be a small semiclassical parameter, we let $\bbar{T}^*\mc{M}$ be the fiber radial compactification of $T^*\mc{M}$ as defined in \cite[Section E.1.3]{DyZw} and $H_p$ will denote the Hamiltonian vector field of $p\in C^\infty(T^*\mc{M})$. Given an operator $P$, we write $\Im P = \frac{P - P^*}{2i}$ for the imaginary part of $P$ and $\Re P = \frac{P + P^*}{2}$ for the real part; then $P = \Re P + i \Im P$. Recall that for $x\in \rr$ we write $x_+ = \max(x, 0)$.

Most of the results we gather in this appendix are simple extensions of classical results in semiclassical analysis that can be found in the books \cite{Zw} and \cite[Appendix E]{DyZw}. We shall only point out the main differences with our setting.

For each $m\in \rr$, we define the exotic pseudo-differential calculus $\Psi_{h,\rho, k}^m(\mc{M})$ by saying that $A\in \Psi_{h,\rho, k}^m(\mc{M})$ if its Schwartz kernel $K_A$ is in $\mc{O}_{C^N(\mc{M}\times \mc{M})}(h^N)$ for all $N>0$ outside a neighborhood of the diagonal, and near the diagonal can be written in local coordinates as 
\[ K_A(x,y)=(2\pi h)^{-n}\int_{\rr^n} e^{\frac{i}{h}(x-y)\xi}a(x,\xi)d\xi,\]
where the local symbols are in the class $S^{m}_{h,\rho,k}(\rr^{2n})$ defined by the property: 
$a\in S^{m}_{h,\rho,k}(\rr^{2n})$ if $a\in C^\infty(\rr^{2n})$ is an $h$-dependent function and satisfies in local coordinates (for some $C_{\alpha,\beta}$ uniform in $h$)
\begin{equation}\label{eq:exoticsymbolclass'}
	|\pl_x^{\alpha}\pl_\xi^{\beta}a(x,\xi)|\leq 
C_{\alpha,\beta}\cjg \xi\cjd^{m-|\beta|}h^{-\rho(|\alpha|-k)_+}.
\end{equation}
We notice that it is important here, for the calculus, that the loss of $h^{-\rho}$ happens only in the $x$-derivatives and not in the $\xi$ derivatives. First, observe the basic properties for $0\leq k'\leq  k$
\begin{equation}\label{basicprop}
\begin{gathered} 
a\in S^{m}_{h,\rho,k}(\rr^{2n}),\,  b\in S^{m'}_{h,\rho,k'}(\rr^{2n}) \Longrightarrow ab\in S^{m+m'}_{h,\rho,k'}(\rr^{2n}),\\
a\in S^{m}_{h,\rho,k}(\rr^{2n}) \Longrightarrow \pl^\alpha_xa\in h^{-\rho (|\alpha|-k)_+}S^{m}_{h,\rho,(k-|\alpha|)_+}(\rr^{2n}) , \,\, \pl^\alpha_\xi a\in S^{m-|\alpha|}_{h,\rho,k}(\rr^{2n}),\\
\forall j\in [0,k], \quad h^{j\rho}S^{m}_{h,\rho,k-j}(\rr^{2n})\subset S^{m}_{h,\rho,k}(\rr^{2n}).
\end{gathered}
\end{equation}
We define $S^m_{h,\rho,k}(T^*\mc{M})$ to be $C^\infty(T^*\mc{M})$ functions that, using local coordinates on $\mc{M}$, are in $S^m_{h,\rho,k}(\rr^{2n})$. First, one directly sees from the formula of symbols under a change of coordinates  \cite[Theorem 9.9]{Zw} that the symbol 
in local coordinates being in $S^{m}_{h,\rho,k}(\rr^{2n})$ is invariant by change of coordinates, and moreover there is a principal symbol map 
\[ \sigma: \Psi_{h,\rho, k}^m(\mc{M})\to S^{m}_{h,\rho,k}(T^*\mc{M})/hS^{m-1}_{h,\rho,k}(T^*\mc{M}).\]
Using local charts and a partition of unity, we fix a semi-classical quantization 
${\rm Op}_h:S^{m}_{h,\rho,k}(T^*\mc{M})\to \Psi_{h,\rho,k}^m(\mc{M})$, which satisfies 
\[ \sigma ({\rm Op}_h(a))=a \, \, {\rm mod}\, \, hS^{m-1}_{h,\rho,k}(T^*\mc{M}).\]

We first check that symbols in this class are closed under composition. Recall from \cite[Theorem 4.14]{Zw} that if $A\in \Psi^{m}_{h,\rho,k}(\rr^n)$ and $B\in \Psi^{m'}_{h,\rho,k}(\rr^n)$ have full symbol $a,b$ then $AB$ has full symbol (as an oscillatory integral)
\begin{equation}\label{asharpb}
a \# b(x,\xi) =(2\pi h)^{-n}\int_{\rr^{2n}} e^{-\frac{i}{h}(x'.\xi')}a(x,\xi+\xi')b(x+x',\xi)dx'd\xi'\end{equation} 
which has expansion 
\begin{equation}\label{eq:symbolcomposition}
	a \# b =\sum_{|\alpha|\leq N} \frac{(-ih)^{|\alpha|}}{\alpha!} \partial_\xi^\alpha a \partial_x^\alpha b +\mc{O}_{S_{h,\rho,0}^{m+m'-N-1}(\rr^{2n})}(h^{(N+1)(1-\rho)}).
\end{equation}
Here  $|\partial_\xi^\alpha a \partial_x^\alpha b|\leq C_{\alpha}h^{-|\alpha|\rho}\cjg \xi\cjd^{m+m'-|\alpha|}$ so that higher order terms in the expansion are higher powers of $h$ and of $\cjg \xi\cjd^{-1}$.  
\begin{lemm}
	Let $a \in S^{m_1}_{h, \rho, k}(\rr^{2n})$ and $b \in  S^{m_2}_{h, \rho, k}(\rr^{2n})$. Then $a \# b \in S^{m_1 + m_2}_{h, \rho, k}(\rr^{2n})$. 
\end{lemm}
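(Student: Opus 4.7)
The strategy is to plug the asymptotic expansion \eqref{eq:symbolcomposition} of $a \# b$ directly into the basic properties \eqref{basicprop} of the exotic symbol classes, verifying term-by-term membership in $S^{m_1+m_2}_{h,\rho,k}(\rr^{2n})$ and then absorbing the remainder by taking $N$ large enough depending on the derivative order being estimated.

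For each truncation term $P_\alpha := \frac{(-ih)^{|\alpha|}}{\alpha!}\, \partial_\xi^\alpha a \cdot \partial_x^\alpha b$, the second line of \eqref{basicprop} yields $\partial_\xi^\alpha a \in S^{m_1-|\alpha|}_{h,\rho,k}(\rr^{2n})$ and $\partial_x^\alpha b \in h^{-\rho(|\alpha|-k)_+} S^{m_2}_{h,\rho,(k-|\alpha|)_+}(\rr^{2n})$, and the product rule (first line of \eqref{basicprop}, applied with $k' = (k-|\alpha|)_+\leq k$) then places $P_\alpha$ in $h^{|\alpha| - \rho(|\alpha|-k)_+} S^{m_1+m_2-|\alpha|}_{h,\rho,(k-|\alpha|)_+}(\rr^{2n})$. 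A short case split on whether $|\alpha| \leq k$ or $|\alpha| > k$, combined with the third line of \eqref{basicprop} (the inclusion $h^{j\rho} S^m_{h,\rho,k-j} \subset S^m_{h,\rho,k}$ for $0 \leq j \leq k$) and the trivial inclusion $S^m_{h,\rho,0} \subset h^{-\rho k} S^m_{h,\rho,k}$ (which follows at once from $|\gamma| - (|\gamma|-k)_+ \leq k$), places $P_\alpha$ in $h^{|\alpha|(1-\rho)} S^{m_1+m_2-|\alpha|}_{h,\rho,k}(\rr^{2n}) \subset S^{m_1+m_2}_{h,\rho,k}(\rr^{2n})$. In particular the $\alpha = 0$ contribution is just the product $ab$, already in the target class, and every higher term carries an extra power of $h^{1-\rho}$.

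For the remainder $R_N \in \mc{O}_{S^{m_1+m_2-N-1}_{h,\rho,0}}(h^{(N+1)(1-\rho)})$ I apply the same inclusion $S^m_{h,\rho,0} \subset h^{-\rho k} S^m_{h,\rho,k}$ to land in $\mc{O}_{S^{m_1+m_2-N-1}_{h,\rho,k}}(h^{(N+1)(1-\rho) - \rho k})$. To verify an individual symbol estimate $|\partial_x^\gamma \partial_\xi^\delta (a \# b)| \leq C_{\gamma,\delta} \langle \xi \rangle^{m_1+m_2-|\delta|} h^{-\rho(|\gamma|-k)_+}$, I then choose $N = N(|\gamma|,|\delta|)$ large enough that $(N+1)(1-\rho) \geq \rho k$ and $N+1 \geq |\delta|$; the finite sum of the $P_\alpha$'s, each already in $S^{m_1+m_2}_{h,\rho,k}$, together with $R_N$ gives the claimed bound. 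Since this is possible for every $(\gamma,\delta)$, the symbol $a \# b$ lies in $S^{m_1+m_2}_{h,\rho,k}(\rr^{2n})$.

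The one slightly delicate point is the Leibniz bookkeeping inside each $P_\alpha$: when $\partial_x^\gamma \partial_\xi^\delta$ hits the product $\partial_\xi^\alpha a \cdot \partial_x^\alpha b$, the $x$-derivatives split between the two factors as $|\gamma'|$ and $|\gamma|-|\gamma'|+|\alpha|$, and one must verify that the worst redistribution still respects the threshold structure $h^{-\rho(|\gamma|-k)_+}$. This reduces to a four-case analysis depending on whether $|\gamma'|$ and $|\gamma|-|\gamma'|+|\alpha|$ exceed $k$, two of which are trivially impossible (when $|\gamma| \leq k$ but one of the two summands exceeds $k$); in each of the remaining cases the prefactor $h^{|\alpha|}$ absorbs the discrepancy between $(|\gamma'|-k)_+ + (|\gamma|-|\gamma'|+|\alpha|-k)_+$ and $(|\gamma|-k)_+$, confirming the bound.
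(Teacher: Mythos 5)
Your proof is correct and follows exactly the approach the paper intends: the paper's own proof is the one-line remark ``This follows from \eqref{eq:symbolcomposition} and \eqref{basicprop},'' and you have simply filled in the details that remark elides, namely the case split $|\alpha|\le k$ versus $|\alpha|>k$ to place each $P_\alpha$ in $h^{|\alpha|(1-\rho)}S^{m_1+m_2-|\alpha|}_{h,\rho,k}$ and the choice of $N=N(\gamma,\delta)$ to absorb the remainder. Two small remarks: the condition $N+1\ge|\delta|$ is superfluous (since $\langle\xi\rangle^{-N-1}\le 1$ the $\xi$-weight is already favorable; only $(N+1)(1-\rho)\ge\rho k$ is needed so that, when $|\gamma|>k$, the remainder's $h^{(N+1)(1-\rho)-\rho|\gamma|}$ is dominated by $h^{-\rho(|\gamma|-k)_+}$); and the final ``Leibniz bookkeeping'' paragraph re-derives the product rule that is already stated and invoked as the first line of \eqref{basicprop}, so while it is a valid sanity check it is not a separate step.
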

\begin{proof}
This follows from \eqref{eq:symbolcomposition} and \eqref{basicprop}.
\end{proof}

For $A\in \Psi_{h,\rho,k}^{m}(\mc{M})$, we say that $(x_0,\xi_0)\in \bbar{T}^*\mc{M}$ is not in ${\rm WF}_h(A)$  if there is a small neighborhood $U$ of $(x_0,\xi_0)$ in $\bbar{T}^*\mc{M}$ so that 
the full local symbol of $A$ restricted to $U$ is in $h^{N}S^{-N}_{h,\rho,0}(U)$ for all $N>0$. We also define the elliptic set ${\rm ell}_h(A)$ of $A\in \Psi_{h,\rho,k}^m(\mc{M})$ to be the set of points $(x_0,\xi_0)\in \bbar{T}^*\mc{M}$ so that for a neighborhood $U$ of $(x_0,\xi_0)$ there is $c_0>0$ so that $\cjg \xi\cjd^{-m}|\sigma(A)(x,\xi)|\geq c_0$, for $(x, \xi) \in U$. We finally list some properties of the $\Psi_{h, \rho, k}(\M)$ calculus.

\begin{prop}\label{prop:calcprops}
The following properties hold:
\begin{itemize} \itemsep2pt

	\item[1.] Let $A \in \Psi_{h, \rho, k}^m(\mc{M})$. If $B, B' \in \Psi_h^{\comp}(\mc{M})$ with ${\rm WF}_h(B) \cap {\rm WF}_h(B') = \emptyset$, then $BAB' \in h^\infty \Psi_h^{\comp}(\mc{M})$.
	
	\item[2.] The principal symbol is well-defined as a map
	\[\sigma: \Psi^m_{h, \rho, k}(T^*\mc{M}) \to S^m_{h, \rho, k}(T^*\mc{M}) / 
	hS^{m-1}_{h, \rho, k}(T^*\mc{M})\]
	with kernel $h\Psi_{h,\rho,k}^{m-1}(\mc{M})$, 
	and it satisfies for any $A\in \Psi^m_{h, \rho, k}(\mc{M})$ and $B\in \Psi^{m'}_{h, \rho, k}(\mc{M})$
	\[\begin{gathered}
	\sigma(AB) = \sigma(A) \sigma(B) \,\,\,\mathrm{mod}\,\,\, h^{1-\rho}S^{m+m'-1}_{h,\rho,k}(T^*\mc{M}), \\
	\sigma(AB) = \sigma(A) \sigma(B) \,\,\,\mathrm{mod}\,\,\, hS^{m+m'-1}_{h,\rho,k-1}(T^*\mc{M}) \textrm{ if }k\geq 1.
	\end{gathered}\]
	
	\item[3.] If $A \in \Psi^{m_1}_{h, \rho, k}(\mc{M})$ and $B \in \Psi^{m_2}_{h, \rho, k}(\mc{M})$, then 
\[\begin{gathered}
\, [A, B]\in h^{1 - \rho} \Psi^{m_1 + m_2 - 1}_{h, \rho, k}(\mc{M}), \\
h^{-1}[A, B] \in \Psi_{h, \rho, k - 1}^{m_1 + m_2 - 1}(\mc{M}) \textrm{ if }k\geq 1  ,
\end{gathered}\]
and $\sigma(h^{-1}[A, B]) =  -i\{\sigma(A), \sigma(B)\}$.
\item[4.] If $P = -ihX$ for $X$ a vector field on $\M$, $\Theta \in \Psi^m_{h, \rho, k}(\M)$ and $H_p \sigma(\Theta) \in S^m_{h, \rho, k}(\M)$, where $p =\sigma(P)= \xi(X)$, then
$[P, \Theta] \in h\Psi^m_{h, \rho, k}(\M)$.
\item[5.]If $A \in \Psi_{h, \rho, k}^m(\mc{M})$, then $A^* \in \Psi_{h, \rho, k}^m(\mc{M})$ and \[\begin{gathered}
\sigma(A^*)=\bbar{\sigma(A)} \,\,\,\mathrm{mod}\,\,\, h^{1-\rho}S^{m+m'-1}_{h,\rho,k}(T^*\mc{M}),\\
\sigma(A^*)=\bbar{\sigma(A)} \,\,\,\mathrm{mod}\,\,\, hS^{m-1}_{h,\rho,k-1}(T^*\mc{M}) \textrm{ if }k\geq 1.\end{gathered}\]	
\item[6.]  Each $A \in \Psi^0_{h, \rho, 0}(\mc{M})$ is bounded $L^2 \to L^2$ and for each $\varepsilon > 0$
\[\|A\|_{L^2 \to L^2} \leq (1 + \varepsilon) \sup_{h, x, \xi} |\sigma_h(A)(x, \xi)| + \mc{O}_{\varepsilon}(h^{\infty}).\]
Moreover, for any $A \in \Psi_{h, \rho, k}^m(\mc{M})$ and any $s \in \mathbb{R}$ we have
\[\|A\|_{H_h^s \to H_h^{s-m}} \leq (1 + \varepsilon) \sup_{h, x, \xi}|\langle{\xi}\rangle^{-m} \sigma(A)| + \mc{O}_{\eps}(h^\infty).\]
\item[7.] Let $P \in \Psi_{h, \rho, k}^p(\M)$, $A \in \Psi^m_{h, \rho, k}(\M)$ and $B_1 \in \Psi^l_{h, \rho, k}(\M)$. Assume ${\rm WF}_h(A) \subset \Ell_h(P) \cap \Ell_h(B_1)$. Then for all $s\in\rr$, $N > 0$, and $u$ with $B_1Pu \in H^{s-p-l}_h(\M)$
\[\|Au\|_{H^{s-m}_h} \leq C\|B_1Pu\|_{H^{s - p - l}_h} + \mc{O}(h^\infty) \|u\|_{H_h^{-N}}.\]
\item[8.] Assume $k \geq 1$ and let $P \in \Psi_{h, \rho, k}^1(\M)$ with $\Re P \in \Psi^1_h(\M)$ and $\Im P \in h\Psi^0_{h, \rho, k}(\M)$. Denote $p := \sigma(P)$ and assume that for each $(x,\xi)\in {\rm WF}_h(A)\subset \bbar{T}^*\mc{M}$, there is $T>0$ such that $e^{-TH_p}(x,\xi)\in {\rm ell}_h(B)$ and $e^{-tH_p}(x,\xi)\in {\rm ell}_h(B_1)$ for $t \in [0, T]$. Then for each $u\in L^2$ with $Pu \in L^2(\M)$, and every $N > 0$ there is a $C > 0$ such that
\[\|Au\|_{L^2} \leq C\|Bu\|_{L^2} + Ch^{-1} \|B_1 P u\|_{L^2} + Ch^N \|u\|_{H_h^{-N}}.\]
\end{itemize}
\end{prop}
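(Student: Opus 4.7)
The plan is to derive items (1)--(5) from the asymptotic composition formula \eqref{eq:symbolcomposition} and the basic symbol inheritance properties \eqref{basicprop}, then adapt standard semiclassical arguments from \cite{Zw} and \cite[Appendix E]{DyZw} to obtain (6)--(8). The guiding principle throughout is that in $S^m_{h,\rho,k}$ the $h^{-\rho}$-loss per $x$-derivative is switched on only once $k$ such derivatives have been taken, while $\xi$-derivatives never carry any $h$-loss; this is exactly what preserves closure of the calculus under composition.

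For (1), non-stationary phase off the diagonal yields an $h^\infty$-gain that dominates any $h^{-\rho}$-losses from differentiating symbols in $x$. Items (2) and (3) both follow from \eqref{eq:symbolcomposition}: the subleading term in $a\#b$ is $-ih\partial_\xi a\,\partial_x b$, which by \eqref{basicprop} lies in $h^{1-\rho}S^{m_1+m_2-1}_{h,\rho,k}$ in general, and in $hS^{m_1+m_2-1}_{h,\rho,k-1}$ when $k\geq 1$ (since $\partial_x b\in S^{m_2}_{h,\rho,k-1}$ without $h^{-\rho}$-loss). For (3) the leading symbols cancel, so only the Poisson bracket $-ih\{\sigma(A),\sigma(B)\}$ survives. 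For (4) the key algebraic fact is that $p=\xi(X)$ is linear in $\xi$, so $\partial_\xi^\alpha p=0$ for $|\alpha|\geq 2$: the commutator expansion truncates to the term $-ihH_p\sigma(\Theta)$, which lies in $h\Psi^m_{h,\rho,k}$ by hypothesis, plus terms of order $h^{|\alpha|}$ with $|\alpha|\geq 2$ that sit in $h^2\Psi^{m-1}_{h,\rho,k}\subset h\Psi^m_{h,\rho,k}$. Item (5) is handled by the analogous asymptotic formula for the adjoint symbol, where the $-ih\partial_x\partial_\xi\bar a$ term yields the two alternative regularities via the same bookkeeping.

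Items (6) and (7) are mild adaptations of classical results. For (6) I would follow the semiclassical Calder\'on--Vaillancourt proof from \cite[Chapter 13]{Zw}: decompose phase space into $h^{1/2}$-cells, split the operator accordingly, and apply the Cotlar--Stein lemma; off-diagonal interactions are $\mc{O}(h^\infty)$ because $\rho<1$ keeps the $x$-derivative losses strictly weaker than the oscillation gains. Item (7) is then a parametrix construction: using (2), invert $\sigma(B_1)\sigma(P)$ iteratively on $\WFh(A)\subset \Ell_h(P)\cap\Ell_h(B_1)$ up to an $h^{N_1}$-remainder, truncate by a cut-off, and apply (6).

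The substantial step is (8), which I would prove by a positive-commutator argument \`a la H\"ormander. Using the backward-flow hypothesis one constructs a non-negative $g\in C_c^\infty(T^*\mc{M})$, supported in $\Ell_h(B_1)$, equal to one near $\WFh(A)$, and satisfying $H_p g\leq -cg+\tilde g$ with $\supp\tilde g\subset \Ell_h(B)$. Since $\Re P\in\Psi^1_h(\mc{M})$ lies in the standard (non-exotic) calculus and $G:=\mathrm{Op}_h(g)\in\Psi_h^{\comp}(\mc{M})$ has smooth symbol, the commutator $h^{-1}[\Re P,G^*G]$ lies in the standard class with principal symbol $-2gH_pg\geq 2cg^2-2g\tilde g$. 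Pairing $\Im\langle (\Re P)u,G^*Gu\rangle$ with this inequality, and absorbing $\Im P\in h\Psi^0_{h,\rho,k}$ as a lower-order perturbation (treated via (6)), yields control of $\|Gu\|_{L^2}^2$ by $\|Bu\|_{L^2}^2+h^{-1}\|B_1Pu\|_{L^2}\|u\|_{L^2}$ plus $h^\infty$-errors; (7) then converts this into the required bound on $\|Au\|_{L^2}$. The main obstacle is precisely to ensure that every error operator produced along the way (commutators with $\Im P$, remainders from symbol expansions, the operators $h^{-1}[\cdot,\cdot]$ themselves) still lies in a class with enough regularity for (6) to apply uniformly; the hypothesis $k\geq 1$ in (8) is exactly what keeps this bookkeeping closed, since from (3) each commutator drops the $k$-index by one but from $k\geq 1$ we end up in $\Psi^0_{h,\rho,0}$, where boundedness holds.
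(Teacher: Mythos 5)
Your treatment of items (1)--(5), (7) and (8) is essentially the paper's: items (2)--(4) are read off from the composition asymptotics \eqref{eq:symbolcomposition} and the inheritance rules \eqref{basicprop}, with (4) exploiting that $p=\xi(X)$ is linear in $\xi$; item (5) is the analogous adjoint expansion; item (7) is the standard parametrix argument, using that quotients of symbols by elliptic ones stay in the class; and item (8) is the same positive-commutator/escape-function argument as in the paper (following \cite[Theorem E.47]{DyZw}), including the key observation that $k\geq 1$ lets the commutator $h^{-1}[\,\cdot\,,\Im P]$ land in $\Psi^{0}_{h,\rho,0}(\mc{M})$ where boundedness holds.

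The genuine gap is in your proposed proof of item (6). The paper uses H\"ormander's square-root trick (following \cite[Theorem 4.5]{GrSj}): for $M=\|a\|_\infty$ one writes $(1+\eps)^2M^2-A^*A=B_N^*B_N+h^{(N+1)(1-\rho)}R_N$ by iterating the construction of $\mathrm{Op}_h(\sqrt{(1+\eps)^2M^2-|a|^2})$, where each step gains $h^{1-\rho}$ and the calculus closes precisely because $\rho<1$; Schur's lemma kills the final remainder. You instead propose a Cotlar--Stein/Calder\'on--Vaillancourt cell decomposition. Two problems. First, the $h^{1/2}$-cells you name are the wrong scale when $\rho>1/2$: each $x$-integration by parts gains only $h^{1-\rho}$ from the phase against a cell of size $h^{1/2}$, which is a net loss; one would have to use anisotropic cells (roughly $h^\rho$ in $x$ and $h^{1-\rho}$ in $\xi$) for the off-diagonal decay. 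Second, and more fundamentally, a Cotlar--Stein argument yields $\|A\|_{L^2\to L^2}\leq C_N\sup_{|\alpha|+|\beta|\leq N}\|\partial^\alpha_x\partial^\beta_\xi a\|$ with a constant depending on finitely many symbol seminorms, not the sharp bound $(1+\eps)\sup|\sigma(A)|+\mc{O}_\eps(h^\infty)$ that the statement asserts. The sharp constant is not cosmetic: it is used crucially in the proof of Proposition \ref{prop:semiclassicalmeasureexistence} to get $|I_h(a_h)|\leq C\|a_h\|_\infty+\mc{O}_A(h^\infty)$, which is what lets one extract a semiclassical measure. So for (6) you need the Gårding/square-root route (or an equivalent quantization with sharp bound), not plain Cotlar--Stein.
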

\begin{proof} 1. This follows from the composition formula \eqref{eq:symbolcomposition}.\\
2. This was discussed above.\\
3. From the composition formula \eqref{eq:symbolcomposition}, locally we have
	\[ a\# b-b\# a \sim h(D_\xi a \partial_x b - D_\xi b \partial_x a) + \frac{1}{2}h^2(D_\xi^2 a \partial_x^2 b - D_\xi^2 b \partial_x^2 a) + \frac{1}{6}h^2(D_\xi^3 a \partial_x^3 b - D_\xi^3 b \partial_x^3 a) + \dotso\]
	where, after taking the expansion to a high enough order, the remainder is in $h^NS^{m-N}_{h,\rho,0}(\rr^{2n})$ for some large $N>0$.
	By \eqref{basicprop}, all these terms are in $h^{1-\rho}S^{m_1+m_2-1}_{h,\rho,k}(\rr^{2n})\cap hS^{m_1+m_2-1}_{h,\rho,k-1}(\rr^{2n})$ if $k\geq 1$, and the principal symbol of $h^{-1}[A,B]$ is  
	$-i(\pl_\xi a \partial_x b-\pl_x a \pl_\xi b)$.\\	
4. This follows from the composition formula \eqref{eq:symbolcomposition}, the fact that $\partial_\xi^{\alpha} \sigma(P) = 0$ for $|\alpha| \geq 2$ and item 3 above.\\
5.  This follows from the fact that, if $A$ has full local symbol $a$ in local coordinates, the full symbol $a^*$ of $A^*$ is 
	\[a^*(x, \xi) \sim \sum_\alpha \frac{h^{|\alpha|}}{\alpha!} \partial_\xi^\alpha D_x^\alpha \overline{a(x, \xi)}.\]	
6. The argument is standard (see \cite[Theorem 4.5]{GrSj}) and we just make a brief summary. Let $a := \sigma(A) \in S^0_{h, \rho, k}(\mc{M})$.  For $M=\|a\|_\infty$, we can construct $B_0 \in \Psi^0_{h, \rho, 0}(\mc{M})$ with principal symbol $b_0:=\sqrt{(1 + \varepsilon)^2M^2-|a|^2}\geq \eps M>0$, so that
	\[C:=(1 + \varepsilon)^2 M^2 - A^*A = B_0^*B_0 + h^{1-\rho}R_0,\]
	for some $R_0=R_0^* \in \Psi_{h, \rho, 0}^{-1}(\mc{M})$ by item 2 (and the fact that $a \in S^m_{h, \rho, 0}$ and $a > 0$ implies $\sqrt{a} \in S^{m/2}_{h, \rho, 0}$). Next we can choose $B_1=B_0 + \tfrac{h^{1-\rho}}{2}{\rm Op}_h(b_0^{-1})^*R_0$ so that $B_1^*B_1=C - h^{2(1-\rho)}R_1$ with $R_1\in \Psi^{-2}_{h, \rho, 0}(\mc{M})$.
	 We iterate this procedure to find $B_N \in \Psi_{h, \rho, 0}^{0}(\mc{M})$ such that 
	 $(1 + \varepsilon)^2 M^2 - A^*A=B_N^*B_N+h^{(N+1)(1-\rho)}R_N$ with $R_N\in\Psi_{h,\rho,0}^{-N-1}(\mc{M})$. By Schur's lemma we have $\|R_N\|_{L^2\to L^2}=\mc{O}(h^{(N + 1)(1-\rho) - n})$ and thus for any $u \in L^2$ 
\[
\|Au\|_{L^2}^2 =  (1 + \varepsilon)^2 M^2\|u\|_{L^2}^2 - \|B_Nu\|_{L^2}^2 + \mc{O}(h^{2(N + 1)(1-\rho)-n}) \|u\|^2_{L^2}
\]
which shows the desired estimate by choosing $N$ large. Similarly the case of arbitrary $k$ follows and the Sobolev bound is an easy consequence of this.\\
7. This follows from the parametrix construction in the elliptic set, the main thing to notice is that if $B \in \Psi_{h, \rho, k}^m(\mc{M})$, $A \in \Psi_{h, \rho, k}^{l}(\mc{M})$ and ${\rm WF}_h(A) \subset \Ell_h(B)$, then
		\[\frac{\sigma(A)}{\sigma(B)} \in S_{h, \rho, k}^{l-m}(\mc{M}).\]
8. We follow the proof of \cite[Theorem E.47]{DyZw} and divide the proof into steps.
	
	\emph{Step 0: an escape function.} Fix $\beta \geq 0$. There is a $g \in C^\infty(\overline{T}^*\mc{M})$ with $\supp g \subset \Ell_h(B_1)$, such that
	\[g \geq 0, \quad g > 0 \,\,\mathrm{on}\,\, {\rm WF}_h(A), \quad H_p g \leq - \beta g, \]
	where the last condition holds outside $\Ell_h(B)$.
	
	\emph{Step 1.} Note that $g \in S^0(T^*\M)$ and define 
		\[G:= \Op_h(\langle{\xi}\rangle^s g) \in \Psi_h^s(\M), \quad {\rm WF}_h(G) \subset \Ell_h(B_1).\]
		We can that $u \in C^\infty(\M)$. If we write $f = Pu$,
		\[\Im \langle{f, G^*Gu}\rangle = \underbrace{\Im \langle{(\Re P) u, G^*Gu}\rangle}_{\mathrm{term\,\, T_1}} + \underbrace{\Re \langle{(\Im P) u, G^*Gu}\rangle}_{\mathrm{term\,\, T_2}}.\]
We will bound the two terms on the right separately.

\emph{Step 2: term $T_1$.} Since ${\rm Re}(P)\in \Psi_h^1(\mc{M})$ is non exotic, this step is exactly the same as in the proof of \cite[Theorem E.47]{DyZw} and we get
\[T_1 \leq (C_1 - \beta)h \|Gu\|_{L^2}^2 + Ch\|Bu\|^2_{H_h^s} + Ch^2 \|B_1 u\|^2_{H^{s-\frac{1}{2}}_h} + \mc{O}(h^\infty)\|u\|_{H_h^{-N}}^2.\]

\emph{Step 3: term $T_2$.} 
Write				
\[T_2 = \Re \langle{(\Im P) u, G^*Gu}\rangle = \langle{(\Im P)Gu, Gu}\rangle + \Re \langle{[G, \Im P]u, Gu}\rangle.\]
We estimate the two terms on the right hand side separately. Firstly
\[|\langle{(\Im P)Gu, Gu}\rangle| = h |\langle{(h^{-1}\Im P)}Gu, Gu\rangle| \leq C_2 h \|Gu\|_{L^2}^2 + \mc{O}(h^\infty) \|u\|_{H_h^{-N}}^2,\]
where we used the boundedness property (item 6) for the exotic operator $h^{-1} \Im P$.
For the second term, we need to deal with a commutator. Note first that
\[\Re G^*[G, \Im P] = h \Re G^* [G, h^{-1} \Im P] \in h^2 \Psi^{2s-1}_{h, \rho, k-1}(\M),\]
by item 3 above. As ${\rm WF}_h(G^*[G, \Im P]) \subset \Ell_h(B_1)$, by the elliptic estimate
\begin{align*}
|\langle{\Re (G^*[G, \Im P]) u, u}\rangle| &=  h^2|\langle{h^{-2}\Re (G^*[G, \Im P]) u, Yu}\rangle| + \mc{O}(h^\infty)\|u\|_{H_h^{-N}}^2\\
&\leq Ch^2 \|B_1 u\|^2_{H_h^{s - \frac{1}{2}}} + \mc{O}(h^\infty) \|u\|_{H_h^{-N}}^2,
\end{align*}
where $Y \in \Psi_h^0(\M)$ is such that $Y = 1 + \mc{O}(h^\infty)$ microlocally on ${\rm WF}_h(\Re (G^*[G, \Im P]))$ and ${\rm WF}_h(Y) \subset \Ell_h(B_1)$. Here we used the boundedness in the exotic class item 6, item 1, the elliptic estimate in the exotic class (item 7).
Adding the two estimates we finally obtain 
\[ |\Re\langle{(\Im P)u, G^*Gu}\rangle| \leq C_2h \|Gu\|_{L^2}^2 + Ch^2\|B_1 u\|_{H_h^{s - \frac{1}{2}}}^2 + \mc{O}(h^\infty) \|u\|_{H_h^{-N}}^2.\]

\emph{Step 4.} Adding the estimates in Steps 2 and 3, we obtain
\[ \Im \langle{f, G^*Gu}\rangle \leq (C_1 + C_2 - \beta)h \|Gu\|_{L^2}^2 + Ch\|Bu\|_{H_h^s}^2 + Ch^2 \|B_1u\|_{H_h^{s - \frac{1}{2}}}^2 + \mc{O}(h^\infty) \|u\|^2_{H_h^{-N}}.\]
By ellipticity of $B$ on ${\rm WF}_h(G)$, there is $Q \in \Psi_h^{s}(\M)$ such that $G = QB_1 + R$, where $R \in h^\infty\Psi_h^{-\infty}(\M)$, thus 
\[|\langle{f, G^*Gu}\rangle| \leq |\langle{QB_1f, Gu}\rangle| + |\langle{Rf, Gu}\rangle| \leq C\|B_1f\|_{H_h^s} \|Gu\|_{L^2} + \mc{O}(h^\infty) \|u\|^2_{H_h^{-N}}.\]			
Now choose $\beta = C_1 + C_2 + 1$ to get
\[\|Gu\|_{L^2}^2 \leq C\|Bu\|^2_{H_h^s} + Ch^{-1} \|B_1f\|_{H_h^s} \|Gu\|_{L^2} + Ch\|B_1u\|^2_{H_h^{s-\frac{1}{2}}} + \mc{O}(h^\infty)\|u\|_{H_h^{-N}}^2.\]
We can absorb the $\|Gu\|_{L^2}$ term to the left hand side, at the cost of the additional term $Ch^{-2} \|B_1f\|_{H_h^s}^2$ on the right hand side.
Next, use the condition ${\rm WF}_h(A) \subset \Ell_h(G)$ and elliptic estimates to derive
\begin{equation}\label{inductionstart}
\|Au\|_{H^s_h} \leq C \|Bu\|_{H_h^s} + Ch^{-1} \|B_1f\|_{H_h^s} + Ch^{\frac{1}{2}} \|B_1 u\|_{H_h^{s - \frac{1}{2}}} + \mc{O}(h^\infty) \|u\|_{H_h^{-N}}.
\end{equation}

\emph{Step 5.} Here, one can use the same induction procedure as in \cite[Proof of Th. E.47]{DyZw} to show that for each $\ell\in \nn$
\[\|Au\|_{H^s_h} \leq C \|Bu\|_{H_h^s} + Ch^{-1} \|B_1f\|_{H_h^s} + Ch^{\frac{\ell}{2}} \|B_1 u\|_{H_h^{s - \frac{\ell}{2}}} + \mc{O}(h^\infty) \|u\|_{H_h^{-N}},\]
where the first step of the induction is exactly where we arrived in \eqref{inductionstart}.
\end{proof}

Next we discuss semiclassical defect measures in the setting of the exotic calculus.

\begin{prop}\label{prop:semiclassicalmeasureexistence}
Assume that $u_h\in L^2(\M)$ is a family satisfying $\|u_h\|_{L^2} = \mc{O}(1)$. Then there exists a Radon measure $\mu$, called semiclassical measure, and a sequence $h_j \to 0$, such that for any $A \in \Psi^{\comp}_{h, \rho, 0}(\M)$ with $\lim_{h\to 0} \sigma(A)(h;x,\xi) = a_0(x,\xi)$ in $C_c^0(T^*\M)$, it holds that
\[\lim_{j\to \infty}\langle{Au_{h_j}, u_{h_j}}\rangle_{L^2} = \int_{T^*\M} a_0\, d\mu.\]
\end{prop}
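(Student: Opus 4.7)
The plan is to follow the classical construction of semiclassical defect measures as in \cite[Theorem E.42]{DyZw}, taking care at two points where the exotic calculus differs slightly from the standard one: the composition formula loses powers of $h^{1-\rho}$ rather than $h$, and the symbol $\sigma(A)$ only converges to $a_0$ in $C^0_c$ (with $a_0$ merely continuous).

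First I would define, for each $h > 0$, the linear functional $\ell_h : C_c^\infty(T^*\mc{M}) \to \cc$ by $\ell_h(a) := \cjg \Op_h(a) u_h, u_h\cjd_{L^2}$. By the $L^2$-boundedness statement in Proposition \ref{prop:calcprops} (item 6), applied in the class $\Psi^0_{h,\rho,0}(\mc{M})$, we have $|\ell_h(a)| \leq (1+\eps)\|a\|_\infty \|u_h\|^2_{L^2} + \mc{O}(h^\infty) \leq C\|a\|_\infty$ uniformly in small $h$. Choosing a countable dense family $\{a_n\} \subset C_c^\infty(T^*\mc{M})$ (for the $C^0$ topology, with uniform compact support on each compact set), a standard diagonal extraction produces a sequence $h_j \to 0$ such that $\ell_{h_j}(a_n)$ converges for every $n$. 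By uniform boundedness and density, $\ell(a) := \lim_j \ell_{h_j}(a)$ extends to a continuous linear functional on $C_c(T^*\mc{M})$.

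Next I would establish positivity of $\ell$, which via the Riesz--Markov theorem yields the Radon measure $\mu$. Given $a \in C_c^\infty(T^*\mc{M})$ with $a \geq 0$, fix $\chi \in C_c^\infty(T^*\mc{M})$ equal to $1$ on $\supp(a)$ and, for $\eta > 0$, set $b_\eta := \sqrt{a + \eta \chi} \in C_c^\infty(T^*\mc{M})$. By the composition and adjoint rules (items 2 and 5 of Proposition \ref{prop:calcprops}) applied in the class $\Psi^{\comp}_{h,\rho,0}(\mc{M})$,
\[
\Op_h(b_\eta)^* \Op_h(b_\eta) = \Op_h(a + \eta \chi) + h^{1-\rho} R_{h,\eta}, \qquad R_{h,\eta} \in \Psi^{\comp}_{h,\rho,0}(\mc{M}).
\]
Pairing with $u_h$ and using item 6 again gives $\ell_h(a + \eta\chi) \geq -C h^{1-\rho}$, so passing to the limit along $h_j \to 0$ and then $\eta \to 0$ yields $\ell(a) \geq 0$. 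Thus $\ell(a) = \int_{T^*\mc{M}} a \, d\mu$ for a positive Radon measure $\mu$.

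Finally I would upgrade from $\Op_h(a)$ with $a \in C_c^\infty$ to arbitrary $A \in \Psi^{\comp}_{h,\rho,0}(\mc{M})$ with $\sigma(A)(h;\cdot) \to a_0$ in $C_c^0(T^*\mc{M})$. By the definition of the principal symbol, $A = \Op_h(\sigma(A)) + h A'$ for some $A' \in \Psi^{\comp}_{h,\rho,0}(\mc{M})$, and the correction contributes $\mc{O}(h)$ to $\cjg A u_h, u_h\cjd$ by item 6. For the main term, approximate $a_0$ by $b_\delta \in C_c^\infty(T^*\mc{M})$ with $\|a_0 - b_\delta\|_\infty < \delta$ and uniformly compactly supported; then $\sigma(A) - b_\delta \to a_0 - b_\delta$ uniformly, and item 6 gives
\[
|\cjg(\Op_h(\sigma(A)) - \Op_h(b_\delta)) u_h, u_h\cjd| \leq (1+\eps)\|\sigma(A) - b_\delta\|_\infty \|u_h\|_{L^2}^2 + \mc{O}(h^\infty).
\]
Along $h_j \to 0$ this is bounded by $(1+\eps)\delta \cdot \limsup \|u_h\|^2 + o(1)$, while $\cjg \Op_{h_j}(b_\delta) u_{h_j}, u_{h_j}\cjd \to \int b_\delta \, d\mu$. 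Sending $\delta \to 0$ and using $\int b_\delta \, d\mu \to \int a_0 \, d\mu$ by dominated convergence (since $\mu$ is finite on the common compact support), we conclude $\lim_j \cjg A u_{h_j}, u_{h_j}\cjd = \int a_0 \, d\mu$.

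The only delicate step is the positivity argument, because the composition identity in the exotic calculus loses $h^{1-\rho}$ rather than $h$; however $\rho \in (0,1)$ so this suffices, and this is the only point where one has to verify that the exotic calculus does not degrade the classical reasoning.
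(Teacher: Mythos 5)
Your proposal is correct and reaches the same conclusion as the paper by a route that differs in two technical steps. In the diagonal extraction you take a countable dense family of \emph{fixed} ($h$-independent) symbols in $C_c^\infty(T^*\mc{M})$, whereas the paper extracts a subsequence along a countable dense family of \emph{$h$-dependent} symbol families in $S^{\rm comp}_{h,\rho,0}(T^*\mc{M})$ with the inductive limit topology, then shows $I(a_h)=I(a_0)$ whenever $a_h\to a_0$ in $C_c^0$. Your choice defers all $h$-dependence to a single approximation step at the end (approximating $\sigma(A)$ by fixed smooth $b_\delta$ and using the uniform $L^2$-bound from item 6), and is arguably the more elementary argument. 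For positivity the paper simply cites G{\aa}rding's inequality (applied to $\Op_{h_j}(a_0)$ with $a_0$ a fixed smooth symbol, i.e. in the standard calculus), while you unwind this into the explicit square-root factorization $\Op_h(b_\eta)^*\Op_h(b_\eta)=\Op_h(a+\eta\chi)+\text{error}$; both give the required positivity.

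Two small points. Since $b_\eta$ is $h$-independent and smooth, $\Op_h(b_\eta)$ lies in the \emph{standard} class $\Psi_h^{\rm comp}$ and the composition error is $\mc{O}(h)$, not $h^{1-\rho}$; your closing remark that the $h^{1-\rho}$ loss is the delicate point is therefore slightly off the mark (it actually makes your estimate more generous than needed, so this costs nothing). More substantively, $b_\eta=\sqrt{a+\eta\chi}$ is $C^\infty$ only if you choose $\chi$ so that $\sqrt{\chi}$ is smooth on the set where $a=0$; a generic cutoff can produce a merely continuous square root near $\pl\,\supp(\chi)$. The standard remedy is to take $\chi=\psi^2$ with $\psi\in C_c^\infty$, $0\le\psi\le 1$, and $\psi\equiv 1$ on a neighborhood of $\supp(a)$: then where $\psi=1$ one has $a+\eta\geq\eta>0$, and where $\psi<1$ one has $a=0$ so $\sqrt{a+\eta\chi}=\sqrt\eta\,\psi$, smooth in both regions. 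With that minor fix the proof is complete.
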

\begin{proof} We follow the proof of \cite[Theorem E.42]{DyZw}. By Proposition \ref{prop:calcprops}, we have that $A-{\rm Op}_h(a_h)\in h\Psi^{\rm comp}_{h,\rho,0}(\mc{M})$ for some symbol $a_h\in S_{h,\rho,0}^{\rm comp}(T^*\mc{M})$ so that $a_{h}\to a_0$ in $C_c^0(T^*\M)$; it suffices to prove the claim for $A = \Op_{h_j}(a_{h_j})$. We write $I_h(a_h) := \langle{{\rm Op}(a_h) u_h, u_h}\rangle_{L^2}$
and claim 
\begin{equation}\label{eq:limsupbound}
\limsup_{h \to 0} |I_h(a_h)| \leq C \limsup_{h \to 0} \|a_h\|_\infty \leq C\sup_{h} \|a_h\|_\infty.
\end{equation}
Indeed, by Cauchy-Schwarz and Proposition \ref{prop:calcprops} (item 6), we have
\begin{equation}\label{eq:Ibound}
|I_h(a_h)| \leq C \|a_h\|_{\infty} + \mc{O}_{A}(h^{\infty}),
\end{equation}	
where $C = C(\sup_h \|u_h\|_{L^2}) > 0$. Take a countable, dense subset $(a_h^{\ell})_{\ell\in \nn} \subset S^{\comp}_{h, \rho, 0}(T^*\mc{M})$, where $S^{\comp}_{h, \rho, 0}(T^*\mc{M})$ is equipped with the inductive limit topology from the seminorms in \eqref{eq:exoticsymbolclass'}. By a diagonal argument and since by \eqref{eq:Ibound} $I_h(a_h^\ell)$ is bounded for all $\ell$, we may extract a sequence $h_j \to 0$ such that $I_{h_j}(a_{h_j}^\ell)$ converges for all $\ell$. For each $a_h \in S^{\comp}_{h, \rho, 0}(T^*\mc{M})$ and $\ell$, we get by \eqref{eq:Ibound}
	\[\limsup_{j, j' \to \infty} |I_{h_j}(a_{h_j}) - I_{h_{j'}}(a_{h_{j'}})| \leq \limsup_{j, j' \to \infty} |I_{h_j}(a_{h_j}^\ell) - I_{h_{j'}}(a_{h_{j'}}^\ell)| + C\sup_{h} \|a_h - a_{h}^\ell\|_{\infty}.\]
	Using the density of $a^\ell_h$, we obtain that $I_{h_j}(a_{h_j})$ is a Cauchy sequence and we may define for $a_h\in S^{\comp}_{h, \rho, 0}(T^*\mc{M})$
	\[I(a_h) := \lim_{j \to \infty} I_{h_j}(a_{h_j}).\]
By \eqref{eq:Ibound}, the map $I$ satisfies for each $a_h\in S^{\rm comp}_{h,\rho,0}(T^*\mc{M})$
\begin{equation}\label{eq:IboundC0}
|I(a_h)| \leq C \limsup_{j \to \infty} \|a_{h_j}\|_{\infty} \leq C \sup_{h \in (0, h_0')} \|a_h\|, 
\end{equation}
	for any $h_0' > 0$. 	In particular, $I$ extends to a continuous linear functional on $C_c^0(T^*\mc{M})$ ($h$-independent functions). Given $a_h \in S^{\rm comp}_{h,\rho,0}(T^*\mc{M})$ with $\lim_{h\to 0} a_{h}=a_0 \in C_c^\infty(T^*\M)$ in the $C_c^0(T^*\mc{M})$ topology, we get by \eqref{eq:IboundC0} 
	\[|I(a_0 - a_h)| \leq C\sup_{h \in (0, h_0')} \|a_0 - a_h\|_{\infty} \to 0 \textrm{ as }h_0'\to 0,\]
and thus $I(a_h)=I(a_0)$. By G\aa rding's inequality $I(a_0) \geq 0$ when $a_0 \geq 0$ and so by the Riesz-Markov representation theorem there is a Radon measure $\mu$ such that for each $a_0 \in C_c^\infty(T^*\mc{M})$
\[I(a_0) = \lim_{j\to \infty}\langle{\Op_{h_j}(a_0) u_{h_j}, u_{h_j}}\rangle_{L^2}= \int_{T^*\mc{M}} a_0\,  d\mu.\]
The main claim follows from this by using $I(a_h) = I(a_0)$ under the given assumptions.
\end{proof}

Now we prove a version of a propagation estimate for the semiclassical measure in the exotic calculus. This is a slight extension of \cite[Theorem E.44]{DyZw}. 
Note that if $\sigma(P)$ is real valued and $k\geq 1$, then $\Im P \in h\Psi_{h, \rho, k-1}^{m-1}(\mc{M})$ if $P \in \Psi_{h, \rho, k}^m(\mc{M})$ by Proposition \ref{prop:calcprops} (item 5).

\begin{prop}\label{prop:semiclassicalmeasurepropagation}
Assume $\|u_h\|_{L^2} = \mc{O}(1)$ and $u_h$ converges to a semiclassical  measure $\mu$. Let $P \in \Psi^m_{h, \rho, k}(\M)$ with $k\geq 1$, denote $p:= \sigma(P)$ and assume that $p$ is real-valued for all $h$, and define $b := \sigma(h^{-1}\Im P)$. Assume that for each $a \in C_c^\infty(T^*\M)$
\[(H_p a)_0 = \lim_{h \to 0} H_p a \quad\mathrm{ and }\quad  b_0 = \lim_{h \to 0} b \quad \mathrm{ exist\,\, in }\quad C^0_c(T^*\mc{M}).\] 
Then there is $C>0$ such that for all $a \in C_c^\infty(T^*\M)$ and  $Y \in \Psi_h^{\comp}(\M)$ with $Y = 1 + \mc{O}(h^\infty)$ microlocally on $\supp(a)$
\[\Big|\int_{T^*\M} ((H_{p}a)_0 + 2b_0a) d\mu\Big| \leq C \|a\|_\infty \limsup_{h \to 0} (h^{ - 1} \|YPu_h\|_{L^2} \|Yu_h\|_{L^2}).\]
\end{prop}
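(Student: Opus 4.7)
The plan is to mimic the classical defect measure argument of Dyatlov--Zworski \cite[Thm.~E.44]{DyZw}, adapted to the exotic class $\Psi_{h,\rho,k}^m(\M)$. First I would pick a self-adjoint quantization of $a$: set $A := \tfrac{1}{2}(\Op_h(a) + \Op_h(a)^*) \in \Psi_h^{\comp}(\M)$, so that $A = A^*$ and $\sigma(A) = a$ (since $a$ is real), with ${\rm WF}_h(A) \subset \supp a$. Because $p$ is real and $k \geq 1$, Proposition~\ref{prop:calcprops} (item 5) gives $\Im P = \tfrac{1}{2i}(P - P^*) \in h\Psi_{h,\rho,k-1}^{m-1}(\M)$. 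Writing $P^* = P - 2i\,\Im P$ and using $A = A^*$, a short computation yields the identity
\begin{equation}\label{eq:planid}
\tfrac{i}{h}\langle [P, A]u_h, u_h\rangle \;=\; -\tfrac{2}{h}\Im\langle Au_h, Pu_h\rangle \;-\; 2\langle A(h^{-1}\Im P)u_h, u_h\rangle.
\end{equation}

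Next I would pass to the limit on each side. By Proposition~\ref{prop:calcprops} (item 3), $h^{-1}[P,A] \in \Psi_{h,\rho,k-1}^{\comp}(\M) \subset \Psi_{h,\rho,0}^{\comp}(\M)$ has principal symbol $-i\{p, a\} = -iH_p a$, which by hypothesis converges in $C_c^0(T^*\M)$ to $-i(H_p a)_0$; hence Proposition~\ref{prop:semiclassicalmeasureexistence} gives
\[
\tfrac{i}{h}\langle [P,A]u_h, u_h\rangle \;\longrightarrow\; \int_{T^*\M} (H_p a)_0\, d\mu.
\]
Likewise $A(h^{-1}\Im P) \in \Psi_{h,\rho,k-1}^{\comp}(\M)$ has principal symbol $ab$, converging in $C_c^0$ to $ab_0$, so the second term on the right of \eqref{eq:planid} tends to $2\int ab_0\, d\mu$. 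Rearranging,
\[
\int_{T^*\M} \big((H_p a)_0 + 2b_0 a\big)\, d\mu \;=\; -\lim_{h \to 0} \tfrac{2}{h}\Im\langle Au_h, Pu_h\rangle.
\]

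Finally, I would bound the right-hand side using $Y$. Since $Y = 1 + \mc{O}(h^\infty)$ microlocally on $\supp a \supset {\rm WF}_h(A)$, Proposition~\ref{prop:calcprops} (item 1) gives $A = YAY + \mc{O}(h^\infty \Psi^{-\infty})$, hence
\[
|\langle Au_h, Pu_h\rangle| = |\langle AYu_h, YPu_h\rangle| + \mc{O}(h^\infty)\|u_h\|^2 \;\leq\; \|A\|_{L^2\to L^2}\,\|Yu_h\|\,\|YPu_h\| + \mc{O}(h^\infty).
\]
Proposition~\ref{prop:calcprops} (item 6) applied to $A \in \Psi^0_{h,\rho,0}(\M)$ gives $\|A\|_{L^2\to L^2} \leq (1+\eps)\|a\|_\infty + \mc{O}(h^\infty)$ for any $\eps > 0$. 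Taking $\limsup_{h\to 0}$, the $\mc{O}(h^\infty)/h$ remainders vanish and we obtain the desired estimate with constant $C = 2(1+\eps)$, say $C = 3$.

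The steps are all conceptually standard, and no single one is the ``hard part'': the main technical care is ensuring that the exotic semiclassical calculus behaves like the smooth one where we need it --- namely that Proposition~\ref{prop:semiclassicalmeasureexistence} applies to operators in $\Psi^{\comp}_{h,\rho,k-1}$ (which it does once the principal symbols converge in $C_c^0$, as guaranteed by the hypotheses on $H_p a$ and on $b$), and that the ``$Y = 1$'' microlocal substitution is clean in the exotic class (which is provided by Proposition~\ref{prop:calcprops} item~1). These ingredients are precisely what was established in the preceding part of the appendix, so the proof proceeds without any essentially new difficulty.
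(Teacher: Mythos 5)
Your proposal is correct and follows essentially the same route as the paper: start from the self-adjoint quantization $A$ of $a$, expand the commutator pairing $\langle[P,A]u_h,u_h\rangle$ (equivalently $\Im\langle Pu_h,Au_h\rangle$) into a commutator piece and an $\Im P$ piece, pass to the semiclassical limit via Proposition~\ref{prop:semiclassicalmeasureexistence} and item~3 of Proposition~\ref{prop:calcprops}, and then bound the surviving term by inserting $Y$ on both sides and using the $L^2$ bound from item~6. The only slip is in \eqref{eq:planid}: the term should read $-2\langle Au_h,(h^{-1}\Im P)u_h\rangle=-2\langle(h^{-1}\Im P)Au_h,u_h\rangle$ rather than $-2\langle A(h^{-1}\Im P)u_h,u_h\rangle$, but since $A(h^{-1}\Im P)$ and $(h^{-1}\Im P)A$ share the principal symbol $ab$, this does not affect the limit or the final estimate.
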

\begin{proof}
	Assume without loss of generality that $a$ is real valued. Let  $A \in \Psi_h^{\comp}(M)$ be such that $\sigma(A) = a$ and $A^* = A$. We compute
	\begin{align*}
		h^{-1} \Im \langle{Pu_h, Au_h}\rangle &= (2i)^{-1}h^{- 1}\langle{APu_h, u_h}\rangle - (2i)^{-1} h^{- 1}\langle{P^*Au_h, u_h}\rangle\\
		&=(2i)^{-1}  \langle{h^{ - 1}[A, P]u_h, u_h}\rangle + \langle{(h^{-1}\Im P) Au_h, u_h}\rangle.
	\end{align*}
Now by Proposition \ref{prop:calcprops} (item 3), we have $h^{-1}[A, P] \in \Psi^{\comp}_{h, \rho, k-1}(\M)$ with $-i\sigma(h^{- 1}[A, P]) = H_p a$ and by assumptions $\sigma((h^{-1}\Im P) A) = ba$.  Thus by Proposition \ref{prop:semiclassicalmeasureexistence} there is a semiclassical measure $\mu$ such that the right hand side converges to, after extracting a subsequence $h_j \to 0$
\[\int_{T^*\M} \Big(\frac{1}{2}(H_{p}a)_0 + b_0a\Big) d\mu.\]
Moreover, the left hand side equals (using Proposition \ref{prop:calcprops}, item 1)
\[(2ih)^{-1}(\langle{PYu_h, AYu_h}\rangle - \langle{AYu_h, PYu_h}\rangle) + \mc{O}(h^\infty),\]
from which we easily deduce the main estimate.
\end{proof}

\end{document}